\newtheorem{theorem}{Theorem}
\newtheorem{lemma}[theorem]{Lemma}
\newtheorem{proposition}[theorem]{Proposition}
\newtheorem{corollary}[theorem]{Corollary}
\theoremstyle{definition}
\newtheorem{definition}[theorem]{Definition}
\newtheorem{example}[theorem]{Example}
\numberwithin{equation}{section} \numberwithin{theorem}{section}
\theoremstyle{remark}
\newtheorem{remark}[theorem]{Remark}
\def\z{\,_{\dot z}\,}
\def\w{\,_{\dot w}\,}
\def\zw{\,_{\dot {z+w}}\,}
\def\vac{|0\rangle}                            
\def\g{{\mathfrak{g}}}      
\newcommand\lbb[1]{\label{#1}}
\def\tt{\otimes}                               
\def\<{\langle}
\def\>{\rangle}
\def\dsum{\displaystyle\sum}
\def\st{\; | \;}                               
\def\vac{\mathbf{1}}                            
\def\one{{\mathbf{1}}}
\newcommand{\kk}{\mathbf{k}}
\newcommand{\NN}{\mathbb{N}}
\newcommand{\ZZ}{\mathbb{Z}}
\def\de{\delta}
\def\g{{\mathfrak{g}}}      
\def\z{\,_{\dot z}\,}
\def\x{\,_{\dot x}\,}
\def\y{\,_{\dot y}\,}
\def\w{\,_{\dot w}\,}
\def\r{\,_{\dot r}\,}
\def\sss{\,_{\dot s}\,}
\def\t{\,_{\dot t}\,}
\def\mz{\,_{\dot {-z\ }}\,}
\def\zx{\,_{\dot {z+x}}\,}
\def\zy{\,_{\dot {z+y}}\,}
\def\yz{\,_{\dot {y+z}}\,}
\def\xz{\,_{\dot {x+z}}\,}
\def\xxy{\,_{\dot {x+y}}\,}
\def\yx{\,_{\dot {y+x}}\,}
\def\xr{\,_{\dot {x+r}}\,}
\def\xmy{\,_{\dot{x-y}}\,}
\def\myx{\,_{\dot{-y+x}}\,}
\def\mxr{\,_{\dot{-x+r}}\,}
\def\xmz{\,_{\dot{x-z}}\,}
\def\zmy{\,_{\dot{z-y}}\,}
\def\myz{\,_{\dot{-y+z}}\,}
\def\mzx{\,_{\dot{-z+x}}\,}
\def\zmx{\,_{\dot{z-x}}\,}
\def\mxz{\,_{\dot{-x+z}}\,}
\def\mzx{\,_{\dot{-z+x}}\,}
\def\xzy{\,_{\dot{x+z+y}}\,}
\def\ozz{\underaccent{  z}{\otimes}}
\def\oz{\underaccent{\ \  z}{\otimes}}
\def\omz{\underaccent{\ \  -z}{\otimes}}
\def\ox{\underaccent{\ \  x}{\otimes}}
\def\oy{\underaccent{\ \  y}{\otimes}}
\def\oV{\underaccent{\ \  V}{\otimes}}
\def\oxy{\underaccent{\ \  x+y}{\otimes}}
\def\oxmy{\underaccent{\ \  x-y}{\otimes}}
\def\ozy{\underaccent{\ \  z+y}{\otimes}}
\def\ozmy{\underaccent{\ \  z-y}{\otimes}}
\def\oy{\underaccent{\ \  y}{\otimes}}
\def\oV{\underaccent{\ \  V}{\otimes}}
\def\s{\:\!}
\def\noi{\noindent}
\def\rz{\overset{{\scalebox{.64}{$r$}}\hskip .01cm}{\z}}
\def\Mz{\overset{{\scalebox{.64}{$M$}}\hskip .01cm}{\z}}
\def\Mx{\overset{{\scalebox{.64}{$M$}}\hskip .01cm}{\x}}
\def\My{\overset{{\scalebox{.64}{$M$}}\hskip .01cm}{\y}}
\def\Mt{\overset{{\scalebox{.64}{$M$}}\hskip .01cm}{\t}}
\def\Nz{\overset{{\scalebox{.64}{$N$}}\hskip .01cm}{\z}}
\def\Nx{\overset{{\scalebox{.64}{$N$}}\hskip .01cm}{\x}}
\def\Ny{\overset{{\scalebox{.64}{$N$}}\hskip .01cm}{\y}}
\def\Nt{\overset{{\scalebox{.64}{$N$}}\hskip .01cm}{\t}}
\def\Wx{\overset{{\scalebox{.64}{$W$}}\hskip .01cm}{\x}}
\def\Wy{\overset{{\scalebox{.64}{$W$}}\hskip .01cm}{\y}}
\def\Wt{\overset{{\scalebox{.64}{$W$}}\hskip .01cm}{\t}}
\def\Nzy{\overset{{\scalebox{.64}{$N$}}\hskip .01cm}{\zy}}
\def\Mz{\overset{{\scalebox{.64}{$M$}}\hskip .01cm}{\z}}
\def\Mz{\overset{{\scalebox{.64}{$M$}}\hskip .01cm}{\z}}
\def\Mzx{\overset{{\scalebox{.64}{$M$}}\hskip .01cm}{\zx}}
\def\Mx{\overset{{\scalebox{.64}{$M$}}\hskip .01cm}{\x}}
\def\Mzy{\overset{{\scalebox{.64}{$M$}}\hskip .01cm}{\zy}}
\def\Myz{\overset{{\scalebox{.64}{$M$}}\hskip .01cm}{\yz}}
\def\MNW{ \binom{W}{M\, ,\, N}}
\def\ddx{\frac{d}{dx}\,}
\def\ddz{\frac{d}{dz}\,}
\def\dxyz{\delta(x-y,z)}
\def\dmyxz{\delta(-y+x,z)}
\def\dxzy{\delta(x-z,y)}
\def\VA{(V,\z,\vac,d)}
\def\v2a{(V,\z,\vac,d)}
\begin{document}

\title{Tensor product of modules
over a vertex algebra}

\author[Jos\'e I. Liberati]{Jos\'e I. Liberati$^*$}
\thanks {\textit{$^{*}$Ciem - CONICET, Medina Allende y
Haya de la Torre, Ciudad Universitaria, (5000) C\'ordoba -
Argentina. \hfill \break \indent e-mail: joseliberati@gmail.com}}
\address{{\textit{Ciem - CONICET, Medina Allende y
Haya de la Torre, Ciudad Universitaria, (5000) C\'ordoba -
Argentina. \hfill \break \indent e-mail: joseliberati@gmail.com}}}


\subjclass[2010]{Primary 17B69; Secondary 17B67}

\maketitle

\begin{abstract}
We found a necessary and sufficient condition for the existence of
the tensor product of modules over a vertex algebra. We defined
the notion of vertex bilinear map and we provide two  algebraic
construction of the tensor product, where one of them is of ring
theoretical type. We show the relation between the tensor product
and the vertex homomorphisms. We prove the commutativity of the
tensor product. We also prove the associativity of the tensor
product of modules under certain necessary and sufficient
condition. Finally, we show certain functorial properties of
 the vertex homomorphims and the tensor product.
\end{abstract}


\section{Introduction}\lbb{intro}

This is the first of a series of papers (the second one is \cite{L1}) trying to
extend certain restricted definitions and constructions developed
for vertex operator algebras to the general framework of vertex
algebras without assuming any grading condition neither on the
vertex algebra nor on the modules involved, and making a strong
emphasis on the commutative associative algebra point of view
instead of the  Lie theoretical point of view. These series of
works help to eliminate the Jacobi identity from the theory of
vertex algebras and clean it by considering the correct point of
view based on the theory of commutative associative algebras.

The study of tensor product theory for representations of a chiral
algebra (or vertex operator algebra),  was initiated by physicists
 (\cite{MS} and references there in). After that, Gaberdiel \cite{G} pointed
out that Borcherds suggested a ring like construction, and he
developed it on a physical level of rigor. Simultaneously, Huang
and Lepowsky \cite{HL1}-\cite{HL5} and \cite{H}, gave the first mathematically
rigorous notion and construction of the tensor product of certain
graded modules over certain vertex operator algebras, by using analytical
methods. In  \cite{Li1}, Li gave a formal variable approach to the
tensor product theory, following classical Lie algebra theory and
restricting the definitions and constructions to certain graded
modules over a rational vertex operator algebra.

Many mathematicians believe that there should be an algebraic and
ring type construction of the tensor product of modules over a
vertex algebra. It has been a necessary and missing part of the
theory  in the last 30 years. The present work try to fill this
gap.

In order to have a purely algebraic definition and construction,
we loose the analytical and geometric interpretation, but we
simplify the theory.

Our work is strongly influenced by the works of Haisheng Li,
specially his thesis. Most of the ideas and tricks used in this work are taken
from his papers, but applied in a different context. The ideas and
 constructions are very simples and naturals.

In section 2, we   present the basic definitions and notations.
In section 3, we found a necessary and sufficient condition for the existence of
the tensor product of modules over a vertex algebra, that we called the kernel intertwining
operator full equality condition, and we present the first construction.

Since we have  a
different point of view of very well known notions, in section 4 we define
the notion of vertex bilinear map that replaces the notion of
intertwining operator. This allow us to redefine the notion of tensor product
  with a ring theoretical interpretation (Definition \ref{tensor2}), and we
  provide the second  algebraic
construction of the tensor product, which is of ring
theoretical type.

In  section 5, we introduce the notion of (right) vertex
homomorphism, producing  the "$Hom$" functor for modules over a
vertex algebra, obtaining (from our point of view) an analog of
the $Hom$ functor for modules over an associative commutative
algebra, and we denote it as $Vhom$. We prefer to think about
it in this way, instead of the
$Hom$ functor in Lie theory. In the first part of this section, we
follow \cite{Li1}, \cite{Li2} and \cite{DLM}.
The most important functors in homological algebra are $Hom$,
tensor product and functors derived from them. In the case of
vertex algebras, we should consider $Vhom$ (and later $Vhom^r$).
We shall show that there is an intimate relationship between
$Vhom$ and tensor product, they form an adjoint pair of functors (see
Theorem \ref{TT}, Theorem \ref{TT22} and Corollary \ref{TTTTT}).

In section 6, we prove the commutativity of the tensor product.
 Then we prove the associativity of the tensor product under
certain (algebraic and natural) necessary and sufficient
condition. In order to prove the associativity, we shall
use the universal property of the tensor
product instead of the explicit construction, following in part
the ideas in \cite{DLM} but using algebraic methods instead of
analytic ones. We  simplify Huang's convergence
assumptions.

In section 7, we  introduce the notions of vertex projective, vertex
injective and vertex flat $V$-modules. Roughly speaking,
homological algebra is concerned with the question of how much
modules differ from being projective, injective and flat.
We do not try to develop homological algebra theory for vertex
algebras in this work. This is just an introduction or motivation
to the reader to try to develop it from the point of view of a
vertex algebra as a generalization of an associative commutative
algebra with unit. The point of view and the results of this work could be the
starting point of the development of homological algebra for vertex
algebras.

The results and constructions of this work could be used to
simplify the construction of the tensor product by Kazhdan and
Lusztig in \cite{KL1}-\cite{KL5}, by using the relation with the tensor
product of modules over the vertex algebras associated to the
affine Kac-Moody algebras founded by Zhang \cite{Zha} and references
 there in. It is interesting to see possible consequences on the tensor
product of modules over quantum groups.

In \cite{L2}, we translate the ideas of this work to the construction of the
tensor product of conformal modules over a Lie conformal algebra,
and in \cite{L3} the construction is done for $H$-pseudo algebras,
answering an open question suggested by V. Kac during a graduate
course at MIT in 1997.

After reading this work, there is no doubt  that this is the
correct point of view and that the notation that we introduced
(many years ago) makes it an elegant work. On may say that many
results of this paper follow by standard arguments and therefore
it is not a valuable work, but the {\it key part} of this paper
(and \cite{L1}) is that we found the correct approach, and the
{\it consequence} of this is that the main theorems follow by
standard arguments.

Finally, for me it is important to point out that in 2011
we found the kernel condition and the first construction
of the tensor product, but due to personal problems we abandoned
this paper until January 2014, when  we found the notion of vertex
bilinear map and the second construction. We typed this work,
including the commutativity property of the tensor product and the
relation with $Vhom$. It was basically finished  in 2014, except
for the associativity and the final section. In May 2016, we resume this work
by checking the typos, and   we decided to complete it with the proof of
associativity and the study of the functorial properties of $Vhom$
and the tensor product written in the last section, producing this
final version.

Unless otherwise specified, all vector spaces, linear maps and
tensor products are considered over an algebraically closed field
$\kk$ of characteristic 0.

\section{Definitions and notation}\lbb{def }

\

In order to make a self-contained paper, in  this section we
present the notion of  vertex algebra and their modules,
intertwining operators and tensor product. Our presentation and
notation differ from the usual one because we want to emphasize
the point of view that vertex algebras are analog to commutative
associative algebras with unit.

\

Throughout this work, we define $(x + y)^n$ for $n\in\ZZ$ (in
particular, for $n < 0$) to be the formal series
\begin{align*}
(x+y)^n=\sum_{k \in \mathbb{Z}_+} \binom{n}{k} x^{n-k}y^k,
\end{align*}
where $\binom{n}{k}=\frac{n(n-1)...(n-k+1)}{k!}$.

\

\definition \label{def VA}
A {\it vertex algebra} is a quadruple $\VA$ that consists of a
$\kk$-vector space $V$ equipped with a linear map
 \begin{align}\label{z1}
 \z \, :V &\otimes V \longrightarrow V((z))\\
 a &\otimes b \, \, \,  \longmapsto a\z b, \nonumber
 \end{align}
 a distinguished vector  $\vac$ and $d\in\,$End$(V)$
satisfying the following axioms ($a,b,c \in  V):$\\
\vskip.1cm \noindent$\bullet$ {\it Unit}:
\begin{align*}
 \vac \z a= a \quad  \textrm{ and }& \quad a \z
\vac=e^{zd}a;
\end{align*}

\noindent $\bullet$ {\it Translation - Derivation}: \vskip .1cm
\begin{align}\label{D}
(da)\z b= \ddz (a\z b), \quad d(a\z b)=(da)\z b + a\z (db);
\end{align}
\vskip .3cm

\noindent$\bullet$ {\it Commutativity}:
\begin{align} \label{skew}
a \z b= e^{zd} (b\mz a);
\end{align}
\vskip .3cm

\noindent$\bullet$  {\it Associativity}: For any $a,b,c\in V$,
there exists $l\in \NN$ such that

\begin{align} \label{aociator}
(z+w)^l\, \ (a \z b) \w c = (z+w)^l\, \  a\zw (b\w c) .
\end{align}
\vskip.3cm

\

\begin{remark}\label{r} (a) Observe that the standard notation $Y(a,z)\, b$ for the
$z$-product in (\ref{z1}) has been changed. We adopted this
notation following the practical idea  of the  $\lambda$-bracket
in the notion of Lie conformal algebra (also called vertex Lie
algebra in the literature), see \cite{K}. In fact, we have been
using this notation since 2011 (see the undergraduate thesis of my
student in \cite{O}).

\noindent(b) If we write $a\z b$ in terms of its coefficients we
obtain the definition of the {\it $n$-products} $a\,_{n} b$ given
by
\begin{equation*}
    a\z b= \sum_{n\in\ZZ} (a\,_{n} b) \, z^{-n-1}.
\end{equation*}

\noindent(c) Sometimes we shall use the following standard
notation for the {\it vertex operators}:
\begin{equation*}
\ a\, _{\dot z} \, =\dsum_{n\in
      \mathbb{Z}} \; a_{n} \; z^{-n-1}
\end{equation*}
where $a_{n}\in\;$End$(V)$ is the endomorphism determined by the
$n$-product. Since the image of the map $\z$ is in $V((z))$  we
have that for all $a,b \in V $ there exists $N_{a,b}\in\NN$ such
that
\begin{equation*}
a \z b=\dsum_{n < N_{a,b}} (a_n b) z^{-n-1}.
\end{equation*}
That is, for any $a,b \, \in V$,  there exists $N_{a,b} \geq 0$
such that
\begin{align*}
a_nb=0 \quad \forall \, \, n   \geq N_{a,b}.
\end{align*}

\noi (d) The associativity axiom can be replaced by the same
identity (\ref{aociator}), with the integer $l$ depending only on
$a$ and $c$.
\end{remark}

 The commutativity axiom is known in the literature as
skew-symmetry (see \cite{LL, K}), but for us it really corresponds
to commutativity. We want to emphasize the point of view of a
vertex algebra as a generalization of an associative commutative
algebra with unit (and a derivation), as in \cite{BK, Li3, Bo2},
and having in mind the usual (trivial) example of holomorphic
vertex algebra given by any associative commutative algebra $A$
with unit $\one$ and a derivation $d$, with the $\z$-product
defined by $a\z b:=(e^{zd} a)\cdot b$. In the special case of
$d=0$, we simply have an associative commutative algebra with unit
and $a\z b:= a \cdot b$, and with the definition presented before,
it is immediate to see, without any computation, that this is a
vertex algebra, and this is the reason for us to call the usual
skew-symmetry axiom as commutativity.

An equivalent definition can be obtained by replacing the
associativity axiom by the {\it associator formula} (which is
equivalent to what is  known in the literature as the iterate
formula (see \cite{LL}, p.54-55) or the $n$-product identity (see
\cite{BK})):
\begin{align} \label{associator}
(a \z b) \y c- a\zy (b\y c)=  b\y (a\yz c - a\zy c)\qquad \forall\
a,b,c\in V.
\end{align}
Observe that in the last term of the associator formula we can not
use linearity to write it as a difference of $\ b\,_{\dot
w}\big(\,a\,_{\dot {w+z}}\,c\big)$ and $\ b\,_{\dot
w}\big(\,a\,_{\dot {z+w}}\,c\big)$, because neither of these
expressions in general exist (see \cite{LL}, p.55). This
alternative definition of vertex algebra using the (iterate
formula or) the associator formula is essentially the original
definition given by Borcherds \cite{Bo1}, but in our case it is
written using the generating series in $z$ instead of the
$n$-products.

Another equivalent definition can be obtained by replacing the
associativity axiom (and commutativity) by the {\it Jacobi
identity}:
\begin{align}\label{jacobi}
\dxyz \, a\x b\y- \  \dmyxz\,  b\y a\x= \dxzy\, (a \z b)\y  \quad
\forall \, a,b \in V,
\end{align}
where $\delta(x,y):=\sum_{n \in \mathbb{Z}} x^n y^{-n-1}$ is the
{\it delta function}.

It is well known that taking Res$_{\,x}$ (that is the coefficient in
$x^{-1}$) of the Jacobi identity we obtain the Associator formula
and, using Remark \ref{r} (c), by multiplying the Associator
formula (\ref{associator}) by $(y+z)^{N_{a,c}}$ we get
Associativity. The rest of the proof of the equivalence between
these definitions is more complicated (see \cite{LL}). It is well
known (see \cite{LL}) that in the three equivalent definitions
that we presented, some of the axioms can  be obtained from the
others, but we prefer to make
 emphasis    on the properties of $d$ and the explicit formula for the multiplication
 by the unit.

Another useful formula, called the {\it Commutator formula}, is
obtained by taking Res$_{\,z}$ in the Jacobi identity:
\begin{equation}\label{commutator}
a\x (b\y c)- b\y (a\x c)=(a\xmy b\, - \, a \myx b)\y c.
\end{equation}

\vskip .3cm

\noi from which it is easy to deduce what is called {\it weak
commutativity} or {\it locality} in the literature: for all
$a,b\in V$, there exists $k\in\NN$ such that
\vskip .01cm
\begin{equation}\label{locality}
    (x-y)^k \, a\x (b\y c)= (x-y)^k \,b\y (a\x c) \quad  \hbox{for all }c\in V.
\end{equation}

\

\begin{definition} A {\it module} over a vertex algebra $V$ is
a $\kk$-vector space $M$ equipped with an endomorphism $d$ of $M$
and a linear map

\begin{equation*}
 V\otimes M \longrightarrow M((z)), \qquad \quad (a\,,u)\
 \longmapsto \ a \Mz u=\dsum_{n\in
      \mathbb{Z}} \ (a\,_{n} u)\ z^{-n-1}
\end{equation*}
satisfying the following axioms ($a,b \in  V$ and $u\in M$):
\\
\vskip.1cm \noindent$\bullet$ {\it Unit}:
\begin{align*}
 \vac \Mz u= u;
\end{align*}
$\bullet$ {\it Translation - Derivation}:
\begin{align}\label{MD}
(da)\Mz u= \ddz (a\Mz u), \quad d(a\Mz u)=(da)\Mz u + a\Mz (d\,u);
\end{align}
\vskip.2cm \noindent $\bullet$  {\it Associativity}:  For any
$a,b\in V$ and $u\in M$, there exists $l\in\NN$ such that

\begin{equation}\label{456}
(z+x)^l\, \ (\,a\, _{\dot z} \, b\, )\Mx u\ =(z+x)^l\ \,a \Mzx (\,
b\Mx u\,)\ .
\end{equation}
\end{definition}

\vskip .1cm

\begin{remark}\label{module}
(a)  Sometimes, if everything is clear, we shall use $a\z u$
instead of $a \Mz u$. We shall also use the notation $a\Mz$.
Obviously, $V$ is a module over $V$.

\noindent (b) We follow  \cite{L1,BK}, in the definition of
module, because in order to talk about intertwining operators we
need to work with this category
 of modules, namely, with this $\kk[d]$-module structure
 (similar to the situation of  Lie conformal algebras \cite{K}).
  We denote it by $d$, but it should be $d_M$.

\noindent (c) Similarly to the definition of vertex algebra, the
associativity axiom could be replaced by the associator formula:

\begin{align}\label{assoc}
(\,a\, _{\dot z} \, b\, )\y v\ &=\ \,a\,_{\dot {z+y}}\,(\, b\y
v\,)\ + \ b\,_{\dot y}\big(\,a\yz v\,-\,a\zy v\,\big)\nonumber\\
&=\ \,a\,_{\dot {z+y}}\,(\, b\y v\,)\ - \mathrm{Res}_x\
\de(-y+x,z) \ b\y (a\x v).
\end{align}

\vskip .3cm

\noi or by the Jacobi identity. As in the vertex algebra case, the
associativity axiom can be replaced by the same identity
(\ref{456}), with the integer $l$ depending only on $a$ and $u$.
Any $V$-module $M$ satisfies  the {\it Commutator formula for
modules} given by (for any $a,b\in V$ and $u\in M$):
\begin{equation}\label{commutator-m}
a\x (b\y u)- b\y (a\x u)=(a\xmy b\, - \, a \myx b)\y u.
\end{equation}
from which it is easy to deduce what is called {\it weak
commutativity} or {\it locality} for a $V$-module $M$: for all
$a,b\in V$, there exists $k\in\NN$ such that
\begin{equation}\label{locality-m}
    (x-y)^k \, a\x (b\y u)= (x-y)^k \,b\y (a\x u) \quad  \hbox{for all }u\in M.
\end{equation}
It is well known that commutator formula for modules can not be
used to replace associativity axiom (commutativity of left
multiplication operators does not imply module structure over an
associative commutative algebra).

\noi (d) For $a\in V$ and $u\in M$, we take $N_{a,u}$ the minimum
positive integer such that
\begin{equation}\label{nnnnn}
z^{N_{a,u}} \,
(a\Mz u)\in M[[z]].
\end{equation}

\noi (e) If $V$ is an associative commutative algebra with unit
and $d=0$, then (by translation axiom) we have that $\ddz (a\Mz
u)=0$. Hence $a\Mz u$ is independent of $z$ and a module over the
vertex algebra $V$ corresponds to a module over the associative
commutative algebra with unit $V$ together with $d=d_M\in
End\s_V(M)$. If $d_M=0$, then we recover the usual notion of
module over an associative commutative algebra with unit.

 \end{remark}

\

Let $M$ and $N$ be $V$-$\,$modules, a {\it $V$-$\,$homomorphism} or a {\it
homomorphism of $V$-$\,$modules} from $M$ to $N$ is a linear map
$\varphi:M\to N$ such that for $a\in V$ and $u\in M$
$$
\varphi(a\z u)= a\z \varphi(u) \ \ \mathrm{ and }\ \ \varphi(d u)=
d \varphi(u) .
$$
The space of homomorphisms of $V$-$\,$modules is denoted $Hom_V(M ,
N)$.

\

We shall need the following well known results:

\begin{proposition} \label{lie}(a) Let $V$ be a vertex algebra. Then the quotient space
$$
\g(V):=\kk [t,t^{-1}]\tt V /\bigg(\frac{d}{dt}\tt 1 +1\tt
d\bigg)(\kk [t,t^{-1}]\tt V ),
$$
is a Lie algebra associated to $V$ with Lie
bracket:
\begin{equation*}
[\overline{t^m\tt a},\overline{t^n\tt b}]=\sum_{i=0}^\infty
\binom{m}{i} \overline{t^{m+n-i}\tt a_i b}.
\end{equation*}
for any $a,b\in V$.

\noindent(b) Any $V$-$\,$module is a $\g(V)$-module, where $t^m\tt a$
acts as $a_m$.

\noindent(c) For any  $V$-$\,$module $M$, define $\widehat M=\kk
[t,t^{-1}]\otimes M$. Then $\widehat M$ is a $\g(V)$-module with the
action  given by:
\begin{equation*}
    (t^m\tt a)(t^n\tt v)=\sum_{i=0}^\infty \binom{m}{i}
    t^{m+n-i}\otimes a_i v.
\end{equation*}
for any $a\in V$ and $v\in M$.

\end{proposition}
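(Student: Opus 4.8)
The plan is to treat the three parts in order, with part (a) carrying essentially all of the content. Set $L=\kk[t,t^{-1}]\tt V$, let $D=\frac{d}{dt}\tt 1+1\tt d$, and write $a[m]$ for the class of $t^m\tt a$ in $\g(V)=L/DL$. First, the proposed bracket is a well-defined bilinear map $L\times L\to L$: for fixed $a,b\in V$ only finitely many $a_ib$ are nonzero (Remark \ref{r}(c)), so $\sum_i\binom{m}{i}t^{m+n-i}\tt a_ib$ is a finite sum. Next I would show the bracket descends to $\g(V)$, i.e. that $[DL,L]\subseteq DL$ and $[L,DL]\subseteq DL$. Comparing coefficients in the translation axiom $(da)\z b=\ddz(a\z b)$ gives $(da)_i=-\,i\,a_{i-1}$, and in the derivation axiom $d(a\z b)=(da)\z b+a\z(db)$ gives $a_i(db)=d(a_ib)+i\,a_{i-1}b$; feeding these into the bracket formula together with the polynomial identities $(i+1)\binom{m}{i+1}=(m-i)\binom{m}{i}=m\binom{m-1}{i}$, one checks directly that in fact $[D(t^m\tt a),t^n\tt b]=0$ in $L$ and $[t^m\tt a,D(t^n\tt b)]\in DL$. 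Finally, skew-symmetry $[x,y]=-[y,x]$ in $\g(V)$ comes from the commutativity axiom $a\z b=e^{zd}(b\mz a)$, which on $n$-products reads $a_mb=\sum_{j\ge0}\frac{(-1)^{m+j+1}}{j!}\,d^j(b_{m+j}a)$; since $(d^jc)[p]=(-1)^j\,p(p-1)\cdots(p-j+1)\,c[p-j]$ in $\g(V)$ (iterating $(dc)[p]=-p\,c[p-1]$), the claim reduces to the binomial identity $\sum_i(-1)^i\binom{m}{i}\binom{m+n-i}{k-i}=\binom{n}{k}$, which follows from $(1+x)^{m+n}\bigl(1-\tfrac{x}{1+x}\bigr)^m=(1+x)^n$.

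The one genuinely laborious point, and the step I expect to be the main obstacle, is the Jacobi identity for $\g(V)$. I would prove it by expanding $[[a[l],b[m]],c[n]]$ and $[a[l],[b[m],c[n]]]-[b[m],[a[l],c[n]]]$, applying the bracket formula twice to each; after relabelling indices, the required equality becomes an identity purely among the iterated $n$-products $(a_ib)_kc$, $a_j(b_ic)$ and $b_i(a_jc)$ with Vandermonde-type binomial weights, and this is exactly what the associator (iterate) formula (\ref{associator}) encodes. This is the classical Borcherds computation (cf. \cite{Bo1}, \cite{LL}); it is not conceptually deep, but it is the one place where serious bookkeeping is unavoidable and the only place where the associativity axiom of $V$ is used.

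Parts (b) and (c) are then formal consequences of the same ingredients. For (b): the module translation axiom $(da)\Mz u=\ddz(a\Mz u)$ gives $(da)_m=-\,m\,a_{m-1}$ as operators on $M$, so $t^m\tt a\mapsto a_m$ annihilates $DL$ and hence factors through a linear map $\g(V)\to\mathrm{End}(M)$; and extracting the coefficient of $x^{-m-1}y^{-n-1}$ from the commutator formula for modules (\ref{commutator-m}) yields precisely $[a_m,b_n]=\sum_{i\ge0}\binom{m}{i}(a_ib)_{m+n-i}$, i.e. this map is a Lie algebra homomorphism, so $M$ is a $\g(V)$-module. For (c): the same coefficient computation as in (a) shows $\bigl(D(t^m\tt a)\bigr)(t^n\tt v)=0$, so the formula for the action descends to $\g(V)$; and the required module-compatibility $\bigl([a[m],b[n]]\bigr)(t^p\tt v)=a[m]\!\cdot\!\bigl(b[n]\!\cdot\!(t^p\tt v)\bigr)-b[n]\!\cdot\!\bigl(a[m]\!\cdot\!(t^p\tt v)\bigr)$ unwinds, using the commutator formula for $M$ once and the Vandermonde identity $\sum_l\binom{m-i}{l}\binom{n}{k-l}=\binom{m+n-i}{k}$, into a termwise equality of the two sides. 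Hence $\widehat M$ is a $\g(V)$-module, which completes the proof.
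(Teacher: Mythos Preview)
Your proof is correct and follows the classical Borcherds computation, but you should be aware that the paper does not actually prove this proposition: it is introduced with the phrase ``We shall need the following well known results'' and stated without argument (the references \cite{Bo1}, \cite{K}, \cite{LL}, \cite{Li1} contain the standard treatment). So there is no ``paper's own proof'' to compare against; you have simply supplied what the author chose to omit.

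A couple of minor remarks on your write-up. Your check that $[D(t^m\tt a),t^n\tt b]=0$ already in $L$ (not merely in $DL$) is correct and is a slightly sharper statement than one usually bothers to record. Your handling of skew-symmetry via the commutativity axiom and the identity $(d^jc)[p]=(-1)^j p(p-1)\cdots(p-j+1)\,c[p-j]$ is fine, though many expositions avoid this explicit binomial identity by first proving the commutator formula (\ref{commutator}) and reading off both skew-symmetry and Jacobi simultaneously from it; that route is marginally cleaner bookkeeping but yours is equally valid. Your identification of the Jacobi identity for $\g(V)$ with the iterate formula (\ref{associator}) is exactly right, and your remark that this is the only place the associativity axiom of $V$ enters is accurate. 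Parts (b) and (c) are indeed formal, and your derivation of $[a_m,b_n]=\sum_{i\ge0}\binom{m}{i}(a_ib)_{m+n-i}$ by taking coefficients in (\ref{commutator-m}) is the standard and correct route.
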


As an abuse of notation, we remove the bar in the class elements
in $\g(V)$.

 \ 

Now, we introduce the notions of intertwining operators and tensor
product of modules, and both notions will be redefined from a ring
theoretical point of view in Section \ref{secondoo}.

\vskip .3cm

\begin{definition}\label{intertwining}
Let   $M,N$ and $W$ be three $V$-$\,$modules. An   {\it intertwining
operator} of type $\MNW$ is a $\kk$-bilinear map
\begin{align*}
 I_z \, :M \times N &\longrightarrow W((z))\\
 (u , v) \, \, \,  &\longmapsto I_z(u, v)=\sum_{n\in \ZZ} \,
  I_{_{(n)}}(u, v) \, z^{-n-1}, \nonumber
 \end{align*}
 satisfying the following conditions:
 \vskip.2cm
\noindent $\bullet$ {\it Translation - Derivation}:
\begin{align}\label{ID}
I_z(d\, u , v)= \ddz I_z(u , v),\quad \mathrm{and }\ \ d\,(I_z( u
, v))=I_z(d\, u , v)+I_z( u , d\, v),
\end{align}
\vskip.2cm \noindent $\bullet$  {\it Jacobi identity}: For all $ a
\in V, u\in M$ and $v\in N$

\begin{align}
\dxyz \, a\x I_y(u , v) - \  \dmyxz\,  I_y(u, a\x v)= \dxzy\,
I_y(a \z u , v)  \, .
\label{Ijacobi}
\end{align}
\end{definition}

\

\definition\label{tensor}
Let $M$ and $N$ be two $V$-$\,$modules. A pair $(M\oV N, F_z)$, which
consists of a $V$-$\,$module $M\oV N$ and an intertwining operator
$F_z$ of type $\binom{_{M\oV N}}{^M\, , \, ^N}$,  is called a {\it
tensor product} for the ordered pair $(M,N)$ if the following
universal property holds: For any $V$-$\,$module $W$ and any
intertwining operator $I_z$ of type $\MNW$ there exists a unique
$V$-$\,$homomorphism $\varphi$ from $M\oV N$ to $W$ such that
$I_z=\varphi \circ F_z$, where $\varphi$ is extended canonically
to a linear map from $(M\oV N)((z))$ to $W((z))$.

\

Just as in the classical algebra theory, it follows from the
universal property that if there exists a tensor product for the
ordered pair $(M,N)$, then it is unique up to a $V$-$\,$module
isomorphism. Namely, the pair $(W,G_z)$ is another tensor product
if and only if there exists a $V$-$\,$module isomorphism $\phi:M\oV
N\to W$ such that $G_z=\phi\circ F_z$.

Using the Jacobi identity (\ref{Ijacobi}), it is immediate that
any intertwining operator $I_z\in \MNW$ satisfies the associator
formula (for $a\in V$, $u\in M$ and $v\in N$)
\begin{equation}\label{Iassoc}
a\zx I_x(u,v)=I_x(a\z u,v)+I_x(u, a\zx v - a\xz v),
\end{equation}
the iterated formula
\begin{equation}\label{Ittt}
I_z(a\x u,v)=\hbox{Res}\s_y \big[\delta (y-z,x)a\y
I_z(u,v)-\delta(-z+y,x)I_z(u,a\y v)\big],
\end{equation}
and the commutator formula
\begin{equation}\label{Icommut}
    a\x I_y(u,v)-I_y(u,a\x v)=I_y(a\xmy u -a\myx u,v).
\end{equation}

\

\section{First construction of the tensor product}\lbb{first}

\   

First of all, we need some definitions in order to find necessary
and sufficient conditions for the existence of the tensor product.

\

\definition
(a) Let $M$ and $N$ be two $V$-$\,$modules.  We define the {\it kernel
of the intertwining operators from the pair $(M,N)$} as follows
\begin{align*}
    Ker\, \binom{\cdot}{M,N} :=\bigg\{ &(u,v)\in M\times N  \st \exists\   l_{u,v}\in \NN
 \textrm{ such that  } I_{_{(n)}}(u,v)=0 \nonumber\\&
 \textrm{ for all }n\geq l_{u,v}, \textrm{ for all }
 V\textrm{-modules $W$, and for all }I_z\in
 {\vcenter{\hbox{$\genfrac{(}{)}{0pt}{1}{W}{M,N}$}}}
   \bigg\}\nonumber
\end{align*}
(b) We say that the pair $(M,N)$ satisfies the {\it kernel
intertwining operator full equality condition} if
\begin{align*}
    M\times N=Ker\, \binom{\cdot}{M,N}.
\end{align*}

\

\begin{proposition} (Necessary condition) If the tensor product
$(M\oV N, F_z)$ exists, then the pair $(M,N)$ satisfies the kernel
intertwining operator full equality condition.
\end{proposition}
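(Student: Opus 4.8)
The plan is to show that if a tensor product $(M\oV N, F_z)$ exists, then for every $(u,v)\in M\times N$ there is some $l_{u,v}\in\NN$ with $I_{(n)}(u,v)=0$ for all $n\geq l_{u,v}$, simultaneously over all $V$-modules $W$ and all intertwining operators $I_z$ of type $\MNW$. The key observation is that the universal property reduces the ``for all $W$, for all $I_z$'' quantifier to a single statement about $F_z$ itself: since any such $I_z$ factors as $I_z=\varphi\circ F_z$ for a $V$-homomorphism $\varphi:M\oV N\to W$ (extended canonically to Laurent series), we get $I_{(n)}(u,v)=\varphi\big(F_{(n)}(u,v)\big)$ for every $n$. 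Therefore it suffices to find, for each fixed $(u,v)$, a single bound $l_{u,v}$ such that $F_{(n)}(u,v)=0$ for all $n\geq l_{u,v}$; applying $\varphi$ then kills all the $I_{(n)}(u,v)$ for $n\geq l_{u,v}$ too.

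First I would recall that $F_z$ is itself an intertwining operator of type $\binom{_{M\oV N}}{^M\,,\,^N}$, so by definition of intertwining operator the map $F_z:M\times N\to (M\oV N)((z))$ lands in Laurent series: for the fixed pair $(u,v)$, $F_z(u,v)\in (M\oV N)((z))$, which by definition of $((z))$ means $F_z(u,v)=\sum_{n\in\ZZ}F_{(n)}(u,v)\,z^{-n-1}$ has only finitely many nonzero terms with negative powers of $z$ bounded below — equivalently, there exists $l_{u,v}\in\NN$ such that $F_{(n)}(u,v)=0$ for all $n\geq l_{u,v}$. This is exactly the truncation property of $M\oV N$-valued Laurent series. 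Then I would combine this with the factorization from the previous paragraph: given any $V$-module $W$ and any $I_z\in\MNW$, take the unique $\varphi$ from the universal property; since $\varphi$ is linear and extended coefficientwise, $I_{(n)}(u,v)=\varphi(F_{(n)}(u,v))=\varphi(0)=0$ for all $n\geq l_{u,v}$. Since the same $l_{u,v}$ works for every $W$ and every $I_z$, we conclude $(u,v)\in Ker\,\binom{\cdot}{M,N}$.

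Since $(u,v)$ was arbitrary in $M\times N$, this gives $M\times N\subseteq Ker\,\binom{\cdot}{M,N}$, and the reverse inclusion is trivial by definition, so $M\times N=Ker\,\binom{\cdot}{M,N}$, which is the kernel intertwining operator full equality condition. I do not expect any real obstacle here: the only mild subtlety is making sure the bound $l_{u,v}$ obtained from $F_z(u,v)$ being a genuine Laurent series is the \emph{same} bound for all $W$ and $I_z$ — but this is automatic precisely because the single intertwining operator $F_z$ controls all of them through the universal factorization. The argument is the vertex-algebra analogue of the classical fact that the canonical bilinear map into a tensor product ``dominates'' every bilinear map, transported to the setting where the target lives in a space of Laurent series and the relevant finiteness is the lower truncation of those series.
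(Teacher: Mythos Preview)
Your proof is correct and follows essentially the same approach as the paper: fix $(u,v)$, use that $F_z(u,v)\in (M\oV N)((z))$ to get a uniform truncation bound, and then invoke the universal factorization $I_z=\varphi\circ F_z$ to transfer that bound to every intertwining operator. The paper's proof is just a more compressed version of exactly this argument.
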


\begin{proof} Fix $(u,v)\in M\times N$ and using that $F_z(u,v)\in (M\oV N)((z))$,
 we have that there exists $N\in \NN$ (depending on $u$ and $v$)
 such that $F_{_{(n)}}(u,v)=0$ for all $n\geq N$.
 Since for any $V$-$\,$module $W$ and any
intertwining operator $I_z$ of type $\MNW$ there exists a unique
$V$-$\,$homomorphism $\varphi$ from $M\oV N$ to $W$ such that
$I_z=\varphi \circ F_z$, where $\varphi$ is extended canonically
to a linear map from $(M\oV N)((z))$ to $W((z))$, it follows that
$I_{_{(n)}}(u,v)=0$ for all $n\geq N$.\end{proof}

\begin{theorem} (Sufficient condition) If the pair $(M,N)$ satisfies the kernel
intertwining operator full equality condition, then the tensor
product $(M\oV N, F_z)$ exists.
\end{theorem}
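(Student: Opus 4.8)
The plan is to construct $M \ov N$ as a universal quotient in the style of the classical construction of tensor products, and then to use the kernel condition to check that the canonical intertwining operator $F_z$ lands in the required completed space $(M\ov N)((z))$. More precisely, I would start from the free $\g(V)$-module machinery of Proposition~\ref{lie}: form the vector space $M \otimes N$, consider the space of ``formal $z$-series'' $(M\otimes N)\{z\}$ (arbitrary $\ZZ$-powers of $z$), and inside it the $\g(V)$-submodule generated by the symbols $F_{(n)}(u,v)$, where $F_z(u,v) = \sum_n F_{(n)}(u,v)\, z^{-n-1}$ is forced to satisfy the Jacobi identity \eqref{Ijacobi}, the translation--derivation axioms \eqref{ID}, and the unit/associativity requirements coming from Definition~\ref{intertwining} and the definition of a $V$-module. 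Concretely one builds a huge ``universal'' space $T$ spanned by the coefficients $F_{(n)}(u,v)$ subject only to bilinearity, and quotients by the relations \eqref{Ijacobi}, \eqref{ID}, compatibility with $d$, and the associativity relation; this quotient $\overline T$ carries a $\g(V)$-action (hence a candidate $V$-module structure) and a tautological bilinear map $F_z$. The universal property is then automatic: given any $V$-module $W$ and intertwining operator $I_z \in \binom{W}{M,N}$, the assignment $F_{(n)}(u,v) \mapsto I_{(n)}(u,v)$ respects all the imposed relations, hence descends to a linear map $\overline T \to W$, which is a $V$-module homomorphism by construction and is the unique one with $I_z = \varphi \circ F_z$ since the $F_{(n)}(u,v)$ generate $\overline T$.

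The serious point, and the one where the hypothesis is actually used, is that for $\overline T$ to be a genuine $V$-module we need two finiteness properties of the action $a \Mz(-)$: first that $a\,_n w = 0$ for $n$ large (so $a\,_z w \in \overline T((z))$), and second — the subtle one — that $F_z(u,v)$ itself lies in $(M\ov N)((z))$, i.e. that $F_{(n)}(u,v) = 0$ for $n \gg 0$. This is exactly where the kernel intertwining operator full equality condition enters. I would argue as follows: let $T_{u,v} \subseteq \overline T$ be the span of all $F_{(n)}(u,v)$ for $n \ge l$ for varying $l$, or rather consider the ``universal'' element and show that the class of $F_{(n)}(u,v)$ for $n \ge l_{u,v}$ must be zero in $\overline T$. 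The point is that $\overline T$ is built so that every relation holding in \emph{all} $I_z \in \binom{W}{M,N}$ simultaneously is imposed; since $(u,v) \in Ker\binom{\cdot}{M,N}$ means precisely $I_{(n)}(u,v) = 0$ for $n \ge l_{u,v}$ in every such $I_z$, one must show this vanishing is a \emph{consequence} of the Jacobi/translation/associativity relations already imposed on $\overline T$ — equivalently, that $F_{(n)}(u,v)$ for $n\ge l_{u,v}$ lies in the relation submodule. This requires showing that the ``truncation at level $l_{u,v}$'' is forced by the axioms, which is the content of a lemma one proves by a $\g(V)$-module argument: the element $\sum_{n \ge l} F_{(n)}(u,v) z^{-n-1}$, as a formal object, is annihilated after applying enough of the $a_m$ (using the module finiteness for $M$, $N$) and then using that the relations propagate back down by the commutator formula \eqref{Icommut}.

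The cleanest route, which I would take, is to invert the logic: define $\overline T$ not as an abstract quotient but as a submodule of a product of all the intertwining operators, namely let $\overline T$ be the $\g(V)$-submodule of $\prod_{(W,I_z)} W$ generated by the diagonal elements $(I_{(n)}(u,v))_{(W,I)}$. This is manifestly a $V$-module if we can check the action is ``truncated from above,'' and now the kernel condition says exactly that the diagonal element indexed by $(u,v,n)$ is zero for $n \ge l_{u,v}$, so $F_z(u,v) := \sum_n (I_{(n)}(u,v))_{(W,I)}\, z^{-n-1}$ automatically lies in $\overline T((z))$. The remaining verifications — that $F_z$ is an intertwining operator (inherited componentwise from each $I_z$), that the $\g(V)$-module structure on $\overline T$ comes from a $V$-module structure with the right $d$, and the universal property (projection to the $(W,I_z)$-component gives the desired $\varphi$) — are then routine, with the only care needed being set-theoretic (the collection of intertwining operators is a proper class, so one restricts to a set of representatives, e.g. quotients of the universal $\overline T$, or bounds cardinalities by $|M|,|N|,|V|$). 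I expect the main obstacle to be packaging the ``truncation from above'' of the $V$-module action on $\overline T$ together with the proof that $F_z(u,v)$ is lower-truncated into a single coherent argument — everything else follows the classical template once the kernel condition is in hand.
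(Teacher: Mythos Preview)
Your first approach (the quotient construction) has a genuine gap. You claim that the truncation $F_{(n)}(u,v)=0$ for $n\ge l_{u,v}$ should be a \emph{consequence} of the Jacobi, translation, and associativity relations already imposed on $\overline T$, and you gesture at a ``lemma one proves by a $\g(V)$-module argument''. No such lemma exists: the kernel condition is a statement about all intertwining operators simultaneously, not a formal identity derivable from the axioms of a single one. The paper's fix is to stop trying to derive the truncation and simply \emph{impose} it. Concretely, the paper starts from $F_0(M,N)=\kk[t,t^{-1}]\otimes M\otimes N$ with its natural $\g(V)$-action, first quotients by the $\g(V)$-submodule $J_0$ generated by all $t^n\otimes u\otimes v$ such that $I_{(n)}(u,v)=0$ for every intertwining operator $I$, and only then quotients by the Jacobi and translation relations $J_1$. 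The kernel condition is exactly what guarantees that, after killing $J_0$, each $u\oz v$ is a genuine Laurent series; the universal property survives because every intertwining operator already annihilates $J_0$ by construction. The substantive work that remains is checking that the resulting quotient is a $V$-module (not just a $\g(V)$-module), which the paper does by a direct Jacobi-identity computation.

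Your second approach (the $\g(V)$-submodule of $\prod_{(W,I)} W$ generated by the diagonal elements) is a genuinely different route and, with more care, can be made to work. It is dual to the paper's construction: rather than the largest quotient through which every $I_z$ factors, you build the smallest subobject of the product containing all the diagonal images. What you gain is that the intertwining axioms hold componentwise, so there is no analogue of the $J_1$ step; what you lose is control. The set-theoretic issue is real, and your two suggested fixes are respectively circular (you cannot index the product by quotients of $\overline T$ before $\overline T$ exists) and only a sketch (a cardinality bound on $V$-modules generated by the image of an intertwining operator is needed, together with a check that every $(W,I_z)$ factors through one of the chosen representatives). You also still owe the verification that $a\z$ acts with uniform upper truncation on the diagonal generators across all components; this does follow from the commutator formula together with the kernel condition applied to the finitely many pairs $(a_i u,v)$, but it is precisely the point where the hypothesis enters and deserves more than the word ``routine''. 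The paper's quotient construction avoids all of this and yields an explicit model.
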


We shall do the construction of the tensor product in the rest of
this section, and we shall see that it is also a sufficient
condition for the existence of the tensor product.

\begin{lemma} {\rm (Lemma 5.1.3, \cite{Li1})} Let $(M\oV N, F_z)$ be a
tensor product of the ordered pair $(M,N)$. Then $F_z$ is
surjective in the sense that all the coefficients of $F_z(u,v)$,
for $u\in M$ and $v\in N$, linearly span $M\oV N$.
\end{lemma}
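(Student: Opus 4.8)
The plan is to exploit the universal property directly. Suppose $(M\oV N,F_z)$ is a tensor product, and let $W'\subseteq M\oV N$ be the $V$-submodule spanned by all coefficients $F_{(n)}(u,v)$ with $u\in M$, $v\in N$, $n\in\ZZ$. First I would check that $W'$ is indeed a $V$-submodule: it is a $\kk$-subspace by construction, it is stable under $d$ because $d\,F_z(u,v)=F_z(du,v)+F_z(u,dv)$ by the translation-derivation axiom \eqref{ID}, and it is stable under all the operators $a_n$ ($a\in V$, $n\in\ZZ$) because of the commutator formula \eqref{Icommut} (equivalently the Jacobi identity \eqref{Ijacobi}), which expresses $a\x F_y(u,v)$ in terms of $F_y$ applied to elements of $M$ and $N$. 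Hence $W'$ is a genuine $V$-module.

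Next, observe that $F_z$, viewed as a map $M\times N\to W'((z))$, is itself an intertwining operator of type $\binom{W'}{M,N}$: it takes values in $W'((z))$ by definition of $W'$, and it satisfies \eqref{ID} and \eqref{Ijacobi} because it did so in $M\oV N$ and $W'$ carries the induced module structure. Now apply the universal property of the tensor product to this intertwining operator: there is a unique $V$-homomorphism $\psi\colon M\oV N\to W'$ with $F_z=\psi\circ F_z$ (where $\psi$ is extended coefficientwise to $(M\oV N)((z))\to W'((z))$). Compose with the inclusion $\iota\colon W'\hookrightarrow M\oV N$; then $\iota\circ\psi\colon M\oV N\to M\oV N$ is a $V$-homomorphism satisfying $(\iota\circ\psi)\circ F_z=F_z$. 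But the identity map $\mathrm{id}_{M\oV N}$ also satisfies this, so by the \emph{uniqueness} clause of the universal property (applied with $W=M\oV N$ and $I_z=F_z$) we get $\iota\circ\psi=\mathrm{id}_{M\oV N}$. This forces $\iota$ to be surjective, i.e.\ $W'=M\oV N$, which is exactly the claim.

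The one point requiring care — and the step I expect to be the main obstacle — is verifying that $W'$ really is closed under the $V$-action, since a priori the coefficients $a_n$ acting on a single $F_{(m)}(u,v)$ need not land back in the span of coefficients of $F$. This is resolved by the commutator formula \eqref{Icommut}: reading off the coefficient of $x^{-m-1}y^{-n-1}$ on both sides shows that $a_m\bigl(F_{(n)}(u,v)\bigr)$ equals $F_{(n)}(a_m u,v)$ plus a finite $\kk$-linear combination of further coefficients of $F$, all of which lie in $W'$. (Alternatively one can use the iterate formula \eqref{Ittt} or argue via \prref{lie}, realizing $W'$ as a $\g(V)$-submodule generated by the coefficients of $F$ and checking the extra module axiom \eqref{456} holds on $W'$ because it holds on $M\oV N$.) Once $W'$ is known to be a submodule, the rest is the standard universal-property argument above and is completely formal.
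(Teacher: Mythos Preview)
Your argument is correct and is the standard proof of this fact (indeed, it is essentially the argument given in \cite{Li1}, Lemma~5.1.3, which the present paper simply cites without reproducing a proof). The verification that $W'$ is a $V$-submodule via the commutator formula and the translation--derivation axiom is exactly what is needed, and the uniqueness clause of the universal property then forces $\iota\circ\psi=\mathrm{id}$ as you say.
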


With this lemma in mind, the essential idea is to consider
"strings over $\mathbb{Z}$" for each pair $(u,v)\in M\times N$ and
then take the quotient
 by all the necessary conditions in order to get an intertwining
 operator by taking the generating series of these strings (cf.
  \cite{Li1}).

Therefore, let $M$ and $N$ be two $V$-$\,$modules, and set
\begin{equation*}
F_0(M,N)=\mathbb{C}[t,t^{-1}]\tt M\tt N.
\end{equation*}
As usual in vertex algebra theory, it is more clear to work with
generating series in order to manipulate string of vectors. For
this reason we introduce the following very important notation for
any $u\in M$ and $v\in N$ (cf.
 formula (5.2.2) in \cite{Li1}): \vskip .2cm

\begin{equation}\label{z}
u \oz v:=\delta(z,t)\tt u\tt v=\sum_{n\in \mathbb{Z}} (t^n\tt u\tt
v) z^{-n-1}.
\end{equation}
We want to take the necessary quotients in order to obtain that
$\ozz\ $ is an intertwining operator. In particular, it should
satisfies the commutator formula (\ref{Icommut}), and this is the motivation for
the following action, but in fact, we will see that it is not a
Lie theoretical type action.

By using the generating series of elements in $\g(V)$ given by
\begin{equation*}
a\z =\delta(z,t)\tt a=\sum_{n\in \mathbb{Z}} (t^n\tt a) z^{-n-1},
\end{equation*}
we define an action of $\g(V)$ on $F_0(M,N)$ as follows (for $a\in
V,u\in M,v\in N$):

\begin{align}\label{acc}
    a\z(u\oy v)&=u\oy (a\z v)+(a\zmy u- a\myz u)\oy v\\
    & =u\oy (a\z v)+ \mathrm{Res}_x\
\de(z-x,y) (a\x u\oy v)\nonumber
\end{align}

\

\noi (cf. formulas of $\Delta_z$ in p. 183 \cite{MS} and (5.2.6) in
 \cite{Li1}), and define the linearly extended map given on generators by
$d(u\oz v)= du\oz v+ u\oz dv$.

\

\begin{proposition}\label{p2}
Under the above defined action, $F_0(M,N)$ is a $\g(V)$-module
satisfying
$$
\vac\z=\mathrm{id},\ \ (da)\z =\frac{d}{dz}a\z, \quad \mathrm{and
}\ \ [\, d\, ,\, a\z]=(da)\z
$$

\noindent Moreover, $F_0(M,N)$ is just the tensor product of Lie
algebra  modules $\widehat M$ and $ N$ (cf. Proposition \ref{lie}).
\end{proposition}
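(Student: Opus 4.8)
The plan is to verify the $\g(V)$-module axiom directly on the generating series, and then to recognize the resulting module as the tensor product $\widehat M \tt_{\kk} N$ of $\g(V)$-modules. First I would unwind the generating-series action \eqref{acc} into its $n$-product components: writing $a\z = \sum_m (t^m\tt a)z^{-m-1}$ and $u\oy v = \sum_n (t^n\tt u\tt v)y^{-n-1}$, extracting coefficients from \eqref{acc} gives an explicit formula for $(t^m\tt a)$ acting on $(t^n\tt u\tt v)$ as a finite sum $\sum_{i\ge 0}\binom{m}{i}\,(\text{something involving }a_i)$. The key point to check is that this is exactly the action of $\g(V)$ on the tensor product of the $\g(V)$-modules $\widehat M$ and $N$, where $\widehat M = \kk[t,t^{-1}]\tt M$ carries the action from \prref{lie}(c) and $N$ carries the action from \prref{lie}(b), and $t^m\tt a$ acts on a tensor product of modules in the standard Lie-algebra way via the coproduct $X\mapsto X\tt 1 + 1\tt X$. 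Once the identification with $\widehat M\tt N$ is made, the $\g(V)$-module property is automatic, since the tensor product of two modules over a Lie algebra is again a module; this gives the "Moreover" sentence and simultaneously the first assertion for free.

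Concretely, I would argue as follows. On $\widehat M$ the element $t^m\tt a$ sends $t^n\tt u$ to $\sum_{i\ge 0}\binom{m}{i}\,t^{m+n-i}\tt a_i u$, and on $N$ it sends $v$ to $a_m v$. Hence on $\widehat M\tt N$ the coproduct action sends $(t^n\tt u)\tt v$ to $\sum_{i\ge 0}\binom{m}{i}\,(t^{m+n-i}\tt a_i u)\tt v + (t^n\tt u)\tt a_m v$. Packaging this into generating series in $z$ (for the $t^m\tt a$ index) and $y$ (for the $t^n$ index) reproduces precisely the two terms on the right-hand side of \eqref{acc}: the term $u\oy(a\z v)$ comes from the second summand, and the term $\mathrm{Res}_x\,\delta(z-x,y)(a\x u\oy v)$ (equivalently $(a\zmy u - a\myz u)\oy v$, using the binomial convention for $(z-y)^m$ and $(-y+z)^m$ established at the start of Section 2) comes from the first summand, after matching the $\binom{m}{i}$ coefficients with the expansion of $\delta(z-x,y)$. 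This is the step where I expect the only real friction: keeping the three formal variables and the $\delta$-function/binomial bookkeeping straight, in particular checking that the $\mathrm{Res}_x\,\delta(z-x,y)(a\x u \oy v)$ form and the $(a\zmy u - a\myz u)\oy v$ form genuinely agree and both match the Lie-theoretic answer. The identities $\vac\z=\mathrm{id}$, $(da)\z=\frac{d}{dz}a\z$, and $[d,a\z]=(da)\z$ then follow either from the corresponding identities on $\widehat M$ and $N$ transported through the tensor product, or by a one-line direct computation from \eqref{acc} using the module axioms \eqref{MD} and the definition $d(u\oz v)=du\oz v+u\oz dv$; the relation $(da)_m = -m\,a_{m-1}$ on $\widehat M$, which encodes the quotient in the definition of $\g(V)$, is what makes $(da)\z = \frac{d}{dz}a\z$ hold.

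In summary: (1) expand \eqref{acc} into $n$-product form; (2) write down the Lie-algebra tensor-product action on $\widehat M\tt N$ using \prref{lie}; (3) match the two, thereby identifying $F_0(M,N)$ with $\widehat M\tt N$ as a $\g(V)$-module; (4) deduce the $\g(V)$-module property and the three displayed identities. The main obstacle is purely the multi-variable formal-calculus verification in step (3); everything else is bookkeeping or a direct appeal to \prref{lie}.
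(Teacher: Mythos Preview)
Your proposal is correct and follows essentially the same approach as the paper: expand the generating-series action \eqref{acc} into its component form $(t^m\tt a)(t^n\tt u\tt v)=t^n\tt u\tt a_m v+\sum_{i\ge 0}\binom{m}{i}t^{m+n-i}\tt a_i u\tt v$, recognize this as the tensor-product $\g(V)$-action on $\widehat M\tt N$ via \prref{lie}, and then read off the $\g(V)$-module structure and the displayed identities. The paper's proof is terser but structurally identical.
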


\begin{proof}
Expanding the coefficients in (\ref{acc}), we obtain:
\begin{equation}\label{action}
    (t^m\tt a)(t^n\tt u\tt v)=t^n\tt u \tt a_m v + \sum_{i=0}^\infty \binom{m}{i}
    t^{m+n-i}\otimes a_i u\tt v.
\end{equation}
Using Proposition \ref{lie}, we have the $\g (V)$-module structure
given by the tensor product of Lie algebra modules. The other
properties follow by straightforward computations.
\end{proof}

Let $J_0$ be the $\g(V)$-submodule of $F_0(M,N)$ generated by the
following subspace:

\begin{align}
    \kk\mathrm{-span}\ \bigg\{ t^n\tt u\tt v\in F_0(M, N)  \st  I_{_{(n)}}(u,v)&=0 \textrm{ for all }
 V\textrm{-modules $W$,}\nonumber
  \textrm{ and for all }I_z\in
  {\vcenter{\hbox{$\genfrac{(}{)}{0pt}{1}{W}{M,N}$}}}\bigg\}
  \nonumber
\end{align}

\

Since $M\times N=Ker\, \binom{\cdot}{M,N} $, then for every
$(u,v)\in M\times N$, there exists $l\in\mathbb{N}$ such that
$I_{_{(n)}}(u,v)=0$ for all $n\geq l$, $\textrm{ for all }
 V\textrm{-modules $W$,}
  \textrm{ and for all }I_z\in\MNW$.

Now, we take
\begin{equation*}   F_1(M,N)=F_0(M,N)/J_0.
\end{equation*}

\vskip .3cm

We will still use the notation $u\ox v$ and $t^n\tt u\tt v$ for
elements in the quotient space $F_1(M,N)[[x,x^{-1}]]$ and
$F_1(M,N)$ respectively. We have the following important result:

\vskip .3cm

\begin{proposition}\label{p1}
For any $a\in V$ and any $t^n\tt u\tt v\in F_1(M,N)$, we
have

\vskip .2cm

\noi (a) $a\z(t^n\tt u\tt v)$ involves only finitely many negative
powers of $z$.

\vskip .2cm

\noi (b) $u\ox v$ involves only finitely many negative powers of
$x$.

\vskip .2cm

\noi (c) Let $a\in V, u\in M$ and $v\in N$. Then for any integer
$k$ such that $k\geq N_{a,u}$ (see (\ref{nnnnn})), we have

\begin{equation*}
(z-x)^k \ a\z (u\ox v)= (z-x)^k \ (u\ox a\z v) \ \in
F_1(M,N)((z,x)).
\end{equation*}

\vskip .2cm

\noi (d) For any integer $k$ such that $k\geq N_{a,u}$, an
alternative definition for the action of $\g(V)$ (or $V$) in
$F_1(M,N)$ is given by
\begin{equation*}
\ a\z (u\ox v)= (z-x)^{-k} \big((z-x)^k \ (u\ox a\z v) \big).
\end{equation*}
similar to the ring theoretical case. \vskip .2cm

\noi (e) The map $d$ is well defined in $F_1(M,N)$, that is
$d(J_0)\subseteq J_0$.

\end{proposition}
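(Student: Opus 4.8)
The plan is to prove the five assertions in the order (a), (b), (e), (c), (d), since (a) and (b) are essentially the same statement (via the generating series $u\ozz v$), (e) is needed to make sense of working in the quotient, and (c)--(d) rest on (a)--(b) together with the associativity axiom for modules. For (a), I would argue as follows. By \prref{p2}, the action of $\g(V)$ on $F_0(M,N)$ is the tensor product action $\widehat M \tt N$, so $a\z(t^n\tt u\tt v)$ has, by formula (\ref{action}), the form $t^n\tt u\tt(a\z v)$ plus a sum $\sum_{i\ge 0}\binom{m}{i} t^{m+n-i}\tt a_i u\tt v$. The point is that in the quotient $F_1(M,N)=F_0(M,N)/J_0$ this becomes a \emph{truncated} series: we must show that for $n$ sufficiently negative (equivalently, for the power of $z$ sufficiently positive) all the relevant coefficients lie in $J_0$. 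The key observation is that the defining generators of $J_0$ are exactly the vectors $t^n\tt u\tt v$ such that $I_{(n)}(u,v)=0$ for every intertwining operator $I_z$ of every type $\MNW$; and by the kernel intertwining operator full equality condition $M\times N = Ker\binom{\cdot}{M,N}$, for each fixed $(u,v)$ there is an $l_{u,v}\in\NN$ with $t^n\tt u\tt v\in J_0$ for all $n\ge l_{u,v}$. Since $d_M$ and the $a_i$ only shift the $t$-degree by finitely much and only produce finitely many summands, this uniform truncation is preserved under the $\g(V)$-action, giving (a); and (b), which is the statement ``$u\ozz v\in F_1(M,N)((z))$,'' is the special case $a=\vac$ read off from (\ref{z}) together with the same truncation.

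For (e), I would check $d(J_0)\subseteq J_0$ by noting that $J_0$ is the $\g(V)$-submodule generated by the span $S$ of the generators $t^n\tt u\tt v$ above, and that $d$ satisfies $[d,a\z]=(da)\z$ by \prref{p2}; hence $d$ normalizes the $\g(V)$-action, so it suffices to prove $d(S)\subseteq J_0$. But $d(t^n\tt u\tt v)=t^n\tt (d_M u)\tt v + t^n\tt u\tt (d_M v)$, and the translation--derivation axiom (\ref{ID}) for intertwining operators gives $I_z(d_M u,v)+I_z(u,d_M v)=d_W(I_z(u,v))$; since $I_z(u,v)\in W((z))$, if $I_{(n)}(u,v)=0$ for all $n$ in the relevant range and all $I_z$, then also $I_{(n)}(d_M u,v)+I_{(n)}(u,d_M v)=d_W I_{(n)}(u,v)=0$, so $d(t^n\tt u\tt v)\in S\subseteq J_0$. (A small point to handle: the generators of $J_0$ are indexed by a single $n$, not by a truncation tail, so one verifies $d$ sends each generator into the span of generators.) This shows $d$ descends to $F_1(M,N)$.

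For (c), I would start from the commutator formula (\ref{acc}) for the action, rewritten via $\mathrm{Res}_x\,\delta(z-x,y)(a\x u\oy v)$: expanding the delta function, $a\z(u\ox v) - (u\ox a\z v) = \mathrm{Res}_y\,\delta(z-y,x)\,(a\y u\ox v)$, so that $(z-x)^k\big(a\z(u\ox v)-u\ox a\z v\big)=\mathrm{Res}_y\,(z-x)^k\delta(z-y,x)(a\y u\ox v)$; now $(z-x)^k\delta(z-y,x)$, after substituting, carries a factor $(z-y)^k$, and by \reref{module}(d) the coefficient $a_n u$ vanishes for $n\ge N_{a,u}$, so when $k\ge N_{a,u}$ the residue in $y$ kills every surviving term. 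This is the standard truncation-via-locality argument, and it is legitimate precisely because of part (b), which guarantees the expressions live in $F_1(M,N)((z,x))$ and hence that multiplying by $(z-x)^k$ and then by $(z-x)^{-k}$ is well defined. Part (d) is then an immediate consequence: from (c) the product $(z-x)^k(u\ox a\z v)\in F_1(M,N)((z,x))$ is divisible by $(z-x)^k$ in the ring $F_1(M,N)((z,x))$ (both sides of (c) are), so $(z-x)^{-k}\big((z-x)^k(u\ox a\z v)\big)$ makes sense and equals $a\z(u\ox v)$ by (c).

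The main obstacle I anticipate is part (a)/(b): making rigorous that the uniform truncation bound $l_{u,v}$ --- which a priori depends on $(u,v)$ --- propagates correctly through the $\g(V)$-action and survives passage to the quotient. One has to be careful that $J_0$ is the submodule \emph{generated} by $S$, so an element of $J_0$ is a (finite) $\g(V)$-combination of generators, and one must check that such combinations, once expanded in $t$, still vanish in high enough degree; the cleanest way is probably to observe that $J_0$ itself, as a subspace, is spanned by elements of the form $t^m\tt a$ acting on $S$, compute their $t$-degrees explicitly via (\ref{action}), and then use that for each of the finitely many $(u,v)$ appearing there the tail $\{t^n\tt u\tt v : n\ge l_{u,v}\}$ already lies in $J_0$. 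Everything else is a routine residue computation or a direct appeal to \prref{p2}.
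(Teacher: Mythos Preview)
Your proposal is essentially correct and follows the same approach as the paper: part (a) via formula (\ref{action}) and the kernel condition, (b) directly from the definition of $J_0$, (c) by multiplying (\ref{acc}) by $(z-x)^k$ to kill the residue term, (d) as a consequence of (c), and (e) via the translation--derivation axiom (\ref{ID}). Two small remarks: your framing of (b) as ``the special case $a=\vac$'' of (a) is misleading, since $\vac\z=\mathrm{id}$ is $z$-independent and so (a) says nothing there---(b) really is just the kernel condition read through (\ref{z}), as you in fact say next; and in (d) the paper also checks that the expression $(z-x)^{-k}\big((z-x)^k(u\ox a\z v)\big)$ is independent of the choice of $k\ge N_{a,u}$, which you should include. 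Your treatment of (e), reducing to generators via $[d,a\z]=(da)\z$, is actually more careful than the paper's, which checks only the generators of $J_0$ without making that reduction explicit.
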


\vskip .3cm

\begin{proof}
(a) We fix $a\in V$ and $t^n\tt u\tt v\in F_1(M,N)$. Recall
formula (\ref{action}):
$$
(t^m\tt a)(t^n\tt u\tt v)=t^n\tt u \tt a_m v + \sum_{i=0}^\infty
\binom{m}{i}
    t^{m+n-i}\otimes a_i u\tt v.
    $$
    Observe that in the first term, $a_m v=0$ for a sufficiently large $m$, and
    in the second term, the sum is finite and independent of $m$.
 Then for each element $a_i u\tt
    v$ there exists a power of $t$ such that $t^l\tt a_i u\tt
    v=0 $ if $l$ is large enough. Therefore, for a large enough
    $m$ the result is proved.

(b) It is immediate from the definition of $J_0$ and the kernel
condition that is assume for the pair $(M,N)$.

(c) Multiplying (\ref{acc}) by $(z-x)^{N_{a,u}}$, with $N_{a,u}$
as in  (\ref{nnnnn}), we obtain the equality. Since the LHS is an
element in $(F_1(M,N))((z))((x))$ and the RHS is in
$(F_1(M,N))((x))((z))$, we get the result.

(d) Note that the LHS in (c) involve only finitely many negative
power of $x$, then we multiply both sides by $(z-x)^{-k}$. On the
other hand, (c) implies that $(z-x)^k \ (u\ox a\z v) \ \in
F_1(M,N)((z,x))$, so that the expression on the RHS of (d) is well
defined. Now, it remains to prove that it is independent of $k$.
Assume that $k_1\geq k_2\geq N_{a,u}$. Then for $i=1,2$, we have
$(z-x)^{k_i} \ (u\ox a\z v) \ \in F_1(M,N)((z,x))$ and
\begin{align*}
(z-x)^{-k_1}((z-x)^{k_1} \ (u\ox a\z
v))&=(z-x)^{-k_1}((z-x)^{k_1-k_2}((z-x)^{k_2} \ (u\ox a\z v)))\\
&=(z-x)^{-k_1}(z-x)^{k_1-k_2}((z-x)^{k_2} \ (u\ox a\z v))\\
&=(z-x)^{-k_2}((z-x)^{k_2} \ (u\ox a\z v)),
\end{align*}
proving the assertion (cf. similar arguments to those in Remark
5.17 and the proof of Lemma 5.10 in \cite{Li3}).

(e) Suppose $t^n\tt u\tt v\in J_0$, then $d(t^n\tt u\tt v)=t^n\tt
du\tt v+t^n\tt u\tt dv$. Using (\ref{ID}), we have $I_{_{(n)}}((du, v)+
(u , dv))=d\ I_{_{(n)}}(u , v)=0$, finishing the proof.
\end{proof}

Now,  let $J_1$ be the $\g(V)$-submodule of $F_1(M,N)$ linearly
spanned by all the coefficients in the following expressions:

\begin{align*}
    \dxyz \, a\x (u \oy v) - &\  \dmyxz\,  (u\oy a\x v)-
\dxzy\, (a \z u \oy v),\\
&(d\, u \oz v)- \ddz (u \oz v).
\end{align*}
for $a\in V,u\in M,v\in N.$ By straightforward computations, it is
easy to see that $J_1$ is invariant by $d$. We define
\begin{equation*}
    M\oV N= F_1(M,N)/J_1.
\end{equation*}

\begin{proposition}
The space $M\oV N$ is a $V$-$\,$module, and $\ozz$ is an intertwining
operator of type $\binom{_{_{M \oV N}}}{^{_{M,N}}}$.
\end{proposition}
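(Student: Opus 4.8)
The plan is to verify that, after passing to the quotient $M\oV N = F_1(M,N)/J_1$, the $\g(V)$-action descends to an honest $V$-module action and that $\ozz$ becomes an intertwining operator. The work divides naturally into three stages: first, that the action is well defined on the quotient; second, that the resulting map $V\otimes(M\oV N)\to(M\oV N)((z))$ lands in the correct target and satisfies the unit, translation-derivation, and associativity axioms of a $V$-module; third, that $\ozz$ satisfies the defining properties (translation-derivation and Jacobi identity) of an intertwining operator of type $\binom{_{M\oV N}}{^{M,N}}$.

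For the first stage, recall that $J_1$ is by construction a $\g(V)$-submodule of $F_1(M,N)$, so the $\g(V)$-action descends. The subtle points are finiteness: I must check that for $a\in V$ and $w\in M\oV N$ the series $a\z w$ lies in $(M\oV N)((z))$, i.e. involves only finitely many negative powers of $z$ (inherited from Proposition \ref{p1}(a)) and also only finitely many positive powers for each fixed $w$ — the latter because in formula (\ref{action}) the term $a_m v$ vanishes for $m$ large while the finite sum over $i$ contributes terms $t^{m+n-i}\otimes a_iu\otimes v$ that are killed in $F_1$ once the $t$-exponent is large enough, using the kernel condition; this is exactly Proposition \ref{p1}(a)–(b). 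Likewise $u\ox v\in (M\oV N)((x))$ by Proposition \ref{p1}(b). The map $d$ is well defined by Proposition \ref{p1}(e) together with the explicit observation that $J_1$ is $d$-invariant.

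For the second stage, the unit axiom $\vac\z w = w$ and the translation-derivation identities $(da)\z = \frac{d}{dz}a\z$ and $[d,a\z]=(da)\z$ are inherited directly from Proposition \ref{p2} (they hold already on $F_0(M,N)$, hence on every quotient). The associativity axiom for the $V$-module $M\oV N$ — that for $a,b\in V$ there is $l\in\NN$ with $(z+x)^l(a\z b)\Mx w = (z+x)^l\,a\zx(b\Mx w)$ — follows from the commutator formula built into $J_1$ together with the locality relation (\ref{locality-m}) one gets from it: once left-multiplication operators $a\x$ on $M\oV N$ are mutually local, the standard argument (exactly as in the passage from weak commutativity to associativity in the vertex-algebra setting, Remark \ref{r}) upgrades the $\g(V)$-module structure to a $V$-module structure. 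Concretely, the first family of generators of $J_1$ forces the Jacobi identity (\ref{Ijacobi}) for $\ozz$ at the level of the quotient, and taking $\mathrm{Res}_z$ of that yields the commutator formula among the operators $a\x$ acting on coefficients of $u\oy v$; since these coefficients span $M\oV N$ (the analogue of the surjectivity Lemma), the commutator formula holds on all of $M\oV N$, and then weak commutativity and associativity follow by multiplying by $(x-y)^k$ as in (\ref{locality-m}).

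For the third stage, the translation-derivation property (\ref{ID}) of $\ozz$ is precisely the content of the second generating family in $J_1$ (namely $(du\oz v)-\frac{d}{dz}(u\oz v)$ together with $d$-invariance giving $d(u\oz v)=du\oz v+u\oz dv$), while the Jacobi identity (\ref{Ijacobi}) for $\ozz$ is precisely the first generating family being quotiented out. So once the $V$-module structure on $M\oV N$ is in hand, both intertwining-operator axioms hold by fiat. I expect the main obstacle to be the second stage — verifying module associativity on $M\oV N$ from the imposed commutator/Jacobi relations — since this is where one must genuinely invoke a locality-to-associativity argument rather than merely read off a relation from the definition of $J_0$ or $J_1$; the finiteness bookkeeping in Proposition \ref{p1}, which guarantees all the series involved actually live in the appropriate spaces of Laurent-type series, is the technical backbone that makes this argument legitimate.
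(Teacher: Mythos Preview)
Your stages~1 and~3 are fine and essentially match the paper: finiteness comes from Proposition~\ref{p1}, the unit and translation-derivation axioms descend from Proposition~\ref{p2}, and the intertwining-operator axioms for $\ozz$ hold by the very definition of $J_1$. The gap is in stage~2, where you claim that module associativity on $M\oV N$ follows from mutual locality of the operators $a\x$ via ``the standard argument (exactly as in the passage from weak commutativity to associativity in the vertex-algebra setting).'' This is precisely what the paper warns against in Remark~\ref{module}(c): for modules, the commutator formula (equivalently, locality of left-multiplication operators) does \emph{not} imply associativity---think of the degenerate case where $V$ is a commutative associative algebra with $d=0$, where commuting left-multiplications certainly do not force $(ab)m=a(bm)$. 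The vertex-algebra argument you have in mind uses the vacuum/creation property and skew-symmetry, neither of which has a module analogue. There is also a smaller confusion: taking $\mathrm{Res}_z$ of the intertwining Jacobi identity~(\ref{Ijacobi}) yields the intertwining commutator formula~(\ref{Icommut}), which relates $a\x$ to the bilinear map $\oy$, not the module commutator $[a\x,b\y]$ (the latter you already have from the $\g(V)$-module structure of Proposition~\ref{p2}, but that is exactly the insufficient ingredient).

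What the paper actually does for stage~2 is a direct verification of the full module Jacobi identity $\delta(z-w,x)\,a\z(b\w m)-\delta(-w+z,x)\,b\w(a\z m)=\delta(z-x,w)\,(a\x b)\w m$ for $m=u\oy v$, following Theorem~5.2.9 of~\cite{Li1}. One expands each term using the explicit action~(\ref{acc}), obtaining expressions like~(\ref{24}), (\ref{27}), (\ref{30}); the first terms match by the Jacobi identity on $N$, and the remaining terms are regrouped using delta-function identities and the $J_1$-relations so that they match via the Jacobi identity on $M$. This computation is genuinely needed---it is where the module structures of \emph{both} $M$ and $N$ enter, and it cannot be replaced by a locality-only argument.
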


\begin{proof} By Proposition \ref{p1}, and the $J_1$-defining
relations, it is clear that $\ozz$ is an intertwining operator. In
order to see that $M\oV N$ is a $V$-$\,$module, by Proposition
\ref{p2}, it remains to check Jacobi identity following the proof
 of Theorem 5.2.9 in \cite{Li1}. For $a,b\in  V, u\in M, v\in N$, we have:

\begin{align}
&\de(z-w,x) \ a\z \, (b\w (u\oy v)) = \de(z-w,x) \ a\z\bigg(u\oy
b\w
    v+\mathrm{Res}_r \de(w-r,y)(b\r u \oy v)\bigg)\label{24}\\
    &=\de(z-w,x) \bigg[ u\oy a\z (b\w v)+\mathrm{Res}_r \de(z-r,y)
    (a\r u\oy b\w v)+\mathrm{Res}_r \de(w-r,y)a\z (b\r u\oy v)\bigg]\nonumber
\end{align}
and
\begin{align}
&\de(-w+z,x) \ b\w \, (a\z (u\oy v)) = \de(-w+z,x) \ b\w\bigg(u\oy
a\z
    v+\mathrm{Res}_r \de(z-r,y)(a\r u \oy v)\bigg)\label{27}\\
    &=\de(-w+z,x) \bigg[ u\oy b\w (a\z v)+\mathrm{Res}_r \de(w-r,y)
    (b\r u\oy a\z v) +\mathrm{Res}_r \de(z-r,y)b\w (a\r u\oy v)\bigg]\nonumber
\end{align}
and, on the other hand
\begin{equation}\label{30}
    \de(z-x,w)(a\x b)\w (u\oy v)=\de(z-x,w)\bigg[u\oy(a\x b)\w v +
    \mathrm{Res}_r \de(w-r,y)((a\x b)\r u\oy v)\bigg].
\end{equation}
It follows from the Jacobi identity of $N$ that ($1^{st}$ term in
(\ref{24}))$-$($1^{st}$ term in (\ref{27}))$=$($1^{st}$ term in
(\ref{30})). Using that
\begin{align*}
    \de(z-w,x)\de(w-r,y)&=\de(z-(y+r),x)\de(w-r,y)=\de(z-y,x+r)\de(w-r,y)\\
\de(-w+z,x)\de(w-r,y)&=\de(-(y+r)+z,x)\de(w-r,y)=\de(-y+z,x+r)\de(w-r,y)
\end{align*}
and the defining relations of $J_1$, we obtain that ($3^{rd}$ term
in (\ref{24}))$-$($2^{nd}$ term in (\ref{27})) is equal to:
\begin{align*}
\mathrm{Res}_r &\  \de(w-r,y)\big[\de(z-y,x+r) a\z(b\r u\oy
v)-\de(-y+z,x+r) (b\r  u\oy a\z v)\big]=\\
&=\mathrm{Res}_r \,\de(w-r,y)\de(z-(x+r),y)(a\xr (b\r u)\oy v)\\
&=\mathrm{Res}_r \,\mathrm{Res}_s \,\de(w-r,y)\de(z-(x+r),y)\de(x+r,s)(a\sss (b\r u)\oy v)\\
&=\mathrm{Res}_r \,\mathrm{Res}_s
\,\de(w-r,y)\de(z-s,y)\de(s-r,x)(a\sss (b\r u)\oy v).
\end{align*}
Similarly, we have that ($2^{nd}$ term in (\ref{24}))$-$($3^{rd}$
term in (\ref{27})) is equal to:
\begin{align*}
\mathrm{Res}_r &\  \de(z-r,y)\big[-\de(-y+w,-x+r) (a\r u\oy b\w
v)+\de(w-y,-x+r) b\w( a\r u\oy  v)\big]=\\
&=\mathrm{Res}_r \,\de(z-r,y)\de(w+(x-r),y)(b\mxr (a\r u)\oy v)\\
&=\mathrm{Res}_r \,\mathrm{Res}_s \,\de(z-r,y)\de(w-s,y)\de(-x+r,s)(b\sss (a\r u)\oy v)\\
&=-\mathrm{Res}_r \,\mathrm{Res}_s
\,\de(z-s,y)\de(w-r,y)\de(-r+s,x)(b\r (a\sss u)\oy v).
\end{align*}
Therefore, using Jacobi identity of $M$, we have that ($3^{rd}$
term in (\ref{24})$-2^{nd}$ term in (\ref{27})+$2^{nd}$ term in
(\ref{24})$-$$3^{rd}$ term in (\ref{27})) is equal to
\begin{align*}
\mathrm{Res}_r \,\mathrm{Res}_s &
\,\de(z-s,y)\de(w-r,y)\de(s-x,r)((a\x b)\r u\oy v)=\\
&=\mathrm{Res}_r
\,\de(z-(r+x),y)\de(w-r,y)((a\x b)\r u\oy v)\\
&=\mathrm{Res}_r
\,\de(y+(r+x),z)\de(w-r,y)((a\x b)\r u\oy v)\\
&=\mathrm{Res}_r
\,\de(w+x,z)\de(w-r,y)((a\x b)\r u\oy v)\\
&=\mathrm{Res}_r
\,\de(z-x,w)\de(w-r,y)((a\x b)\r u\oy v)\\
\end{align*}
which is the same as the second term in (\ref{30}), finishing the
proof.
\end{proof}

\

Now, it is clear from the construction that we have

\begin{theorem}  If the pair $(M,N)$ satisfies the kernel
intertwining operator full equality condition, then the pair
$(M\oV N, \ozz\, )$ is a tensor product of the pair $(M,N)$.
\end{theorem}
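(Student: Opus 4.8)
The plan is to verify the universal property directly, using the construction of $M\oV N$ as the iterated quotient $F_0(M,N)/J_0/J_1$. We already know from the preceding proposition that $M\oV N$ is a $V$-module and that $\ozz$ is an intertwining operator of type $\binom{_{M\oV N}}{^{M,N}}$, so the pair $(M\oV N,\ozz)$ is at least a candidate; what remains is the factorization property. So let $W$ be an arbitrary $V$-module and $I_z\in\MNW$ an arbitrary intertwining operator. First I would build a candidate homomorphism $\varphi$ at the level of $F_0(M,N)$: since $F_0(M,N)=\kk[t,t^{-1}]\tt M\tt N$ is free on the symbols $t^n\tt u\tt v$ (bilinearly in $u,v$), the prescription $\varphi(t^n\tt u\tt v):=I_{_{(n)}}(u,v)$ defines a linear map $F_0(M,N)\to W$, equivalently $\varphi(u\oz v)=I_z(u,v)$ after passing to generating series.

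Next I would check that $\varphi$ kills the successive relations, so that it descends first to $F_1(M,N)=F_0(M,N)/J_0$ and then to $M\oV N=F_1(M,N)/J_1$. For $J_0$: by definition $J_0$ is the $\g(V)$-submodule generated by those $t^n\tt u\tt v$ with $I_{_{(n)}}(u,v)=0$ for \emph{all} modules $W$ and \emph{all} intertwining operators; since $\varphi$ is built from one particular such $I_z$, it visibly vanishes on those generators, and I must further check $\varphi(a\z x)=a\z\varphi(x)$ for $x$ a generator so that $\varphi$ vanishes on the whole $\g(V)$-submodule. That intertwining property of $\varphi$ is exactly the statement that $\varphi$ intertwines the action of $\g(V)$ on $F_0(M,N)$ defined in (\ref{acc}) with the $\g(V)$-action on $W$ coming from Proposition \ref{lie}; comparing (\ref{acc}) with the associator/commutator formulas (\ref{Iassoc})--(\ref{Icommut}) satisfied by $I_z$ gives this — this is the computational heart of the argument, though it is essentially the same computation that went into showing (\ref{acc}) is well posed. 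Once $\varphi$ descends to $F_1(M,N)$, the same intertwining property plus the translation--derivation axiom (\ref{ID}) for $I_z$ shows $\varphi$ annihilates the spanning elements of $J_1$ (the Jacobi-identity combination and the $d\,u\oz v-\ddz(u\oz v)$ combination), so $\varphi$ descends to a linear map $M\oV N\to W$.

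Then I would verify that the descended map $\varphi\colon M\oV N\to W$ is a $V$-homomorphism: commutation with $d$ is immediate from $\varphi(d\,u\oz v+u\oz d\,v)=I_z(d\,u,v)+I_z(u,d\,v)=d\,I_z(u,v)$ using (\ref{ID}), and commutation with each $a\z$ is the intertwining property already established, now read on the quotient. By construction $\varphi\circ\ozz=I_z$ on generators, hence everywhere after the canonical extension to power series. For uniqueness I would invoke the surjectivity lemma (Lemma stated above, = Lemma 5.1.3 of \cite{Li1}): the coefficients of $\ozz\,$ span $M\oV N$, so any $V$-homomorphism $\psi$ with $\psi\circ\ozz=I_z$ is forced to agree with $\varphi$ on a spanning set, hence $\psi=\varphi$.

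The main obstacle is the descent of $\varphi$ through $J_0$: one must confirm that the $\g(V)$-module action (\ref{acc}) on $F_0(M,N)$, which is \emph{not} the naive Lie-theoretic action, is intertwined by $\varphi$ with the action on $W$ — i.e. that $I_{_{(n)}}(a_m u,v)$-type terms and $I_{_{(n)}}(u,a_m v)$-type terms produced by applying $a\z$ to a string match the coefficients of (\ref{acc}). This is exactly encoded in the associator/iterated formulas (\ref{Iassoc}), (\ref{Ittt}) for intertwining operators, so no new identity is needed; it is a matter of carefully matching $\de$-function coefficients, entirely parallel to the verification that the action (\ref{acc}) is well defined in the first place. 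Everything else — descent through $J_1$, the homomorphism property, the factorization, and uniqueness — is then routine, and together with the already-proven necessary condition this shows the kernel intertwining operator full equality condition is both necessary and sufficient.
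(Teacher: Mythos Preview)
Your proposal is correct and is exactly the explicit verification the paper gestures at with ``it is clear from the construction.'' One small quibble: you cite the surjectivity lemma (Lemma 5.1.3 of \cite{Li1}) for uniqueness, but as stated that lemma presupposes a tensor product; here you do not need it, since surjectivity of $\ozz$ is manifest from the construction ($M\oV N$ is a quotient of $F_0(M,N)$, whose generators $t^n\tt u\tt v$ are precisely the coefficients of $u\oz v$).
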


\

\section{Second construction and a ring theoretical interpretation}\lbb{secondoo}

\ 

One of the motivations for the definition of vertex bilinear map
is the following: let $R$ be a commutative (associative) ring,
then the product $\mu$ in $R$ is an $R$-bilinear map, that is
$$
\qquad \qquad \qquad \qquad r\mu(a,b)= \mu(ra,b)=\mu(a,rb) \qquad \hbox{ for }\ \ r,a,b\in R,
$$
which means
that
$$
r(ab)=(ra)b \quad \hbox{ and } \quad r(ab)=a(rb),
$$
observe that the first one corresponds to associativity (in our
case (\ref{AA})), and the second one is the commutativity of left
multiplication operators, that in our case corresponds to
locality, that is (\ref{BB}).

By standard arguments in vertex algebra theory, we have
that the following three definitions are equivalent to each other
and they are equivalent to the notion of intertwining operator
(see Theorem \ref{bbb} below).

\

\begin{definition} \label{bilin}
(1) Let $M,N$ and $W$ be three $V$-$\,$modules. A $\kk$-bilinear map
$F_z:M\times N \to W((z))$ is called a {\it vertex bilinear map}
or {\it $V$-$\,$bilinear of type $(M,N;W)$} if it satisfies:

\vskip .2cm

(a) $F_z(d u, v)=\frac{d}{dz} F_z(u,v)$ and $F_z( u,d
v)=\big(d-\frac{d}{dz}\big) F_z(u,v)$.

\vskip .2cm

(b) For any $a\in V$, $u\in M$ and $v\in N$, there exist $n,k\in
\mathbb{N}$ such that

\begin{align}\label{AA}
(z+x)^n \ a\zx F_x(u,v)&=(z+x)^n \ F_x(a\z u,v)
\end{align}
and
\begin{align}\label{BB}
(z-x)^k \ a\z F_x(u,v)=(z-x)^k \ F_x(u,a\z v).
\end{align}
\

\noi (2) Let $M,N$ and $W$ be three $V$-$\,$modules. A $\kk$-bilinear
map $F_z:M\times N \to W((z))$ is called a {\it vertex bilinear
map} or {\it $V$-$\,$bilinear of type $(M,N;W)$} if it satisfies:

\vskip .2cm

(a) $F_z(d u, v)=\frac{d}{dz} F_z(u,v)$ and $F_z( u,d
v)=\big(d-\frac{d}{dz}\big) F_z(u,v)$.

\vskip .2cm

(b) For any $a\in V$, $u\in M$ and $v\in N$

\begin{align}\label{Aa}
(z+x)^{N_{a,v}} \ a\zx F_x(u,v)&=(z+x)^{N_{a,v}} \ F_x(a\z u,v)
\end{align}
and
\begin{align}\label{Bb}
(z-x)^{N_{a,u}} \ a\z F_x(u,v)=(z-x)^{N_{a,u}} \ F_x(u,a\z v).
\end{align}
\

\noi (3) Let $M,N$ and $W$ be three $V$-$\,$modules. A $\kk$-bilinear
map $F_z:M\times N \to W((z))$ is called a {\it vertex bilinear
map} or {\it $V$-$\,$bilinear of type $(M,N;W)$} if it satisfies:

\vskip .2cm

(a) $F_z(d u, v)=\frac{d}{dz} F_z(u,v)$ and $F_z( u,d
v)=\big(d-\frac{d}{dz}\big) F_z(u,v)$.

\vskip .2cm

(b) For any $a\in V$, $u\in M$ and $v\in N$, there exist $n\in \NN$
depending on $a$ and $v$, and $k\in \NN$ depending on $a$ and $u$ such that

\begin{align}\label{AAa}
(z+x)^{n} \ a\zx F_x(u,v)&=(z+x)^{n} \ F_x(a\z u,v)
\end{align}
and
\begin{align}\label{BBb}
(z-x)^{k} \ a\z F_x(u,v)=(z-x)^{k} \ F_x(u,a\z v).
\end{align}

\end{definition}

\

We denote by $V$-$\,Bilinear(M,N;W)$ the space of all vertex
bilinear maps of this type. The other motivation for the
definition of vertex bilinear map is the following result:

\begin{theorem}\label{bbb} The three definitions of vertex bilinear
 map are equivalent to each other. And,
an intertwining operator of type $\MNW$ is the same as a vertex
bilinear map of type $(M,N;W)$.
\end{theorem}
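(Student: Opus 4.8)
The plan is to establish the cycle of implications $(\mathrm{IO})\Rightarrow(2)\Rightarrow(3)\Rightarrow(1)\Rightarrow(\mathrm{IO})$, where $(\mathrm{IO})$ abbreviates ``$F_z$ is an intertwining operator of type $\MNW$''. The two middle implications are immediate: a map satisfying Definition \ref{bilin}(2) satisfies Definition \ref{bilin}(3), because the exponents $N_{a,v}$ and $N_{a,u}$ depend only on $(a,v)$, resp.\ on $(a,u)$; and Definition \ref{bilin}(3) trivially implies Definition \ref{bilin}(1), by forgetting the dependence of the exponents. Moreover, in each of the three definitions part (a) is merely a reformulation of the translation--derivation axiom (\ref{ID}) for $F_z$: the first identity of (a) is the first identity of (\ref{ID}), and summing the two identities of (a) gives $F_z(du,v)+F_z(u,dv)=dF_z(u,v)$, and conversely. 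So it suffices to prove $(\mathrm{IO})\Rightarrow(2)$ and, the real point, $(1)\Rightarrow(\mathrm{IO})$; in both of these only the conditions in part (b) / the Jacobi identity require work.

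For $(\mathrm{IO})\Rightarrow(2)$: let $F_z=I_z$ be an intertwining operator. The excerpt already records, as consequences of the Jacobi identity (\ref{Ijacobi}), the associator formula (\ref{Iassoc}) and the commutator formula (\ref{Icommut}). Multiplying (\ref{Iassoc}) by $(z+x)^{N_{a,v}}$ kills the correction term $I_x(u,a\zx v-a\xz v)$, because $a_n v=0$ for $n\ge N_{a,v}$ forces $(z+x)^{N_{a,v}}\big(a\zx v-a\xz v\big)$ to involve only nonnegative powers of $z+x$, on which the two binomial expansions agree; one may pull $(z+x)^{N_{a,v}}$ inside $I_x(u,-)$ since the $z$-powers are scalars and the $x$-powers merely shift the argument. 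This yields (\ref{Aa}). Symmetrically, relabelling (\ref{Icommut}) via $x\mapsto z$, $y\mapsto x$ and multiplying by $(z-x)^{N_{a,u}}$, the identity $(z-x)^{N_{a,u}}\big(a\zmx u-a\mxz u\big)=0$ kills its right-hand side and produces (\ref{Bb}). Hence $F_z$ satisfies Definition \ref{bilin}(2).

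The crux is $(1)\Rightarrow(\mathrm{IO})$. By part (a) the map $F_z$ already satisfies (\ref{ID}), so only the Jacobi identity (\ref{Ijacobi}) remains. Here condition (\ref{BB}) is precisely a locality (weak commutativity) statement for $F_z$ and condition (\ref{AA}) a (weak) associativity statement, and I would recover (\ref{Ijacobi}) from them by exactly the standard formal-calculus argument that derives the Jacobi identity from locality plus associativity for vertex algebras and their modules (see \cite{LL,Li1}). Concretely: from (\ref{BB}) one extracts the commutator formula (\ref{Icommut}) for $F_z$ --- the two expansions $a\z F_x(u,v)\in W((z))((x))$ and $F_x(u,a\z v)\in W((x))((z))$ agree after multiplication by $(z-x)^k$, so they differ by a combination of delta-function terms supported on $z=x$, and matching this singular part (using the module axioms of $M$) identifies it as $F_x(a\zmx u-a\mxz u,v)$ --- and from (\ref{AA}) one likewise extracts the iterate formula (\ref{Ittt}); the Jacobi identity (\ref{Ijacobi}) is then reassembled from (\ref{Icommut}) and (\ref{Ittt}) by the standard reconstruction procedure. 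Alternatively, the full identity can be obtained in one pass by running the very computation that verifies Jacobi for $M\oV N$ in Section \ref{first}, with $\ozz$ replaced by the abstract $F_z$ and the defining relations of $J_1$ replaced by (\ref{AA})--(\ref{BB}). I expect this formal-calculus reconstruction --- passing from the rationality conditions (\ref{AA})--(\ref{BB}) to the exact three-term delta-function identity (\ref{Ijacobi}) --- to be the only step that is not routine bookkeeping; everything else reduces, as noted above, to triviality or to manipulation of formulas already established in the paper.
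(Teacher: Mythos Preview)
Your overall architecture --- the cycle $(\mathrm{IO})\Rightarrow(2)\Rightarrow(3)\Rightarrow(1)\Rightarrow(\mathrm{IO})$ --- is exactly the paper's, and your handling of $(\mathrm{IO})\Rightarrow(2)$ via the associator and commutator formulas, together with the trivial middle implications, matches the paper's proof.

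The only substantive difference is in the key step $(1)\Rightarrow(\mathrm{IO})$. The paper does not pass through the commutator formula and the iterate formula separately; instead it invokes a single formal-variable lemma (its Lemma~\ref{l1}, i.e.\ Lemma~2.1 of \cite{Li3}, equivalently Proposition~3.4.3 of \cite{LL}): for $A(x,y)\in U((x))((y))$, $B(x,y)\in U((y))((x))$, $C(z,y)\in U((y))((z))$, the three-term delta identity
$\delta(x-y,z)A-\delta(-y+x,z)B=\delta(x-z,y)C$
holds iff $(x-y)^kA=(x-y)^kB$ and $(z+y)^lA(z+y,y)=(z+y)^lC(z,y)$ for some $k,l$. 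Taking $A=a\x F_y(u,v)$, $B=F_y(u,a\x v)$, $C=F_y(a\z u,v)$, condition (\ref{BB}) is the first hypothesis and (\ref{AA}) is the second, and the lemma yields (\ref{Ijacobi}) in one stroke.

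Your elaboration, by contrast, has a soft spot: you claim that from (\ref{BB}) \emph{alone} one can ``match the singular part (using the module axioms of $M$)'' and identify the commutator as $F_x(a\zmx u-a\mxz u,v)$. Locality (\ref{BB}) only tells you that $a\z F_x(u,v)-F_x(u,a\z v)$ is a finite sum $\sum_j c_j(x)\,\partial_z^{\,j}\delta(z,x)$; pinning the coefficients $c_j(x)$ to the specific expression coming from $F_x(a_{\bullet}u,v)$ already requires the associativity information (\ref{AA}), not merely the $V$-module structure on $M$. So the two-step plan ``extract (\ref{Icommut}) from (\ref{BB}), extract (\ref{Ittt}) from (\ref{AA}), then reassemble'' does not decouple as cleanly as you suggest. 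What actually works --- and what the ``standard formal-calculus argument'' you cite amounts to --- is exactly the lemma above, using (\ref{AA}) and (\ref{BB}) \emph{jointly}. If you replace your elaboration by a direct appeal to that lemma, your proof coincides with the paper's.
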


\begin{proof}
It is clear that the Jacobi identity (\ref{Ijacobi}) imply (\ref{Aa})
and (\ref{Bb}) in the second definition of vertex bilinear map, by using the
 associator formula (\ref{Iassoc}) and the commutator formula (\ref{Icommut}).
 It is also clear that the second definition imply the third, and the third imply
  the first one. So, it  remains to show that (\ref{AA}) and (\ref{BB}) imply
   the Jacobi identity (\ref{Ijacobi}). But, this is immediate by
   using the following lemma (cf. Proposition 3.4.3 in \cite{LL}):
\end{proof}

\begin{lemma}\label{l1}
{\rm (Lemma 2.1, \cite{Li3})} Let $U$ be a vector space and let
\begin{align*}
A(x,y)\in & U((x))((y))\\
B(x,y)\in & U((y))((x))\\
C(z,y)\in & U((y))((z)).
\end{align*}
Then
$$
\delta(x-y,z) \ A(x,y)-\delta(-y+x,z)\ B(x,y)=\delta(x-z,y)\
C(z,y)
$$
\vskip .2cm

\noindent if and only if  there exist $k,l\in \mathbb{N}$ such
that
\begin{align*}
(x-y)^k A(x,y) &=(x-y)^k B(x,y)\\
(z+y)^l A(z+y,y) &= (z+y)^l C(z,y).
\end{align*}
\end{lemma}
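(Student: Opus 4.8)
The statement is a standard lemma of formal-variable calculus, and I would prove it directly from the definition of the formal delta function. First I would record the elementary tools to be used: $(\star_{1})$ for $n\ge 0$, $\mathrm{Res}_{z}\bigl(z^{\,n}\delta(x-y,z)\bigr)=(x-y)^{n}$, and in particular $(x-y)^{k}\delta(x-y,z)=z^{k}\delta(x-y,z)$ and $(x-y)^{-k}\delta(x-y,z)=z^{-k}\delta(x-y,z)$, the expansion of $(x-y)^{\pm k}$ being the one carried by $\delta(x-y,z)$, with the analogous statements for $\delta(-y+x,z)$ and, via $z\,\delta(x-z,y)=(x-y)\,\delta(x-z,y)$, for $\delta(x-z,y)$; $(\star_{2})$ the three-variable identity $\delta(x-y,z)-\delta(-y+x,z)=\delta(x-z,y)$, together with the rewritings $\delta(x-y,z)=\delta(z+y,x)$ and $\delta(x-z,y)=\delta(y+z,x)$; and $(\star_{3})$ the substitution rule $\delta(y+z,x)\,g(x)=\delta(y+z,x)\,g(y+z)$, valid whenever $g$ has only finitely many negative powers of $x$. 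Each of these is immediate from the definitions.

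For the ``if'' direction, assume $(x-y)^{k}A(x,y)=(x-y)^{k}B(x,y)$ and $(z+y)^{l}A(z+y,y)=(z+y)^{l}C(z,y)$. Put $G(x,y):=(x-y)^{k}A(x,y)=(x-y)^{k}B(x,y)$; since $A\in U((x))((y))$ and $B\in U((y))((x))$, $G$ lies in $U((x))((y))\cap U((y))((x))$, so all products below are well-defined. By $(\star_{1})$, $\delta(x-y,z)A(x,y)=z^{-k}\delta(x-y,z)G(x,y)$ and $\delta(-y+x,z)B(x,y)=z^{-k}\delta(-y+x,z)G(x,y)$; subtracting and using $(\star_{2})$,
\[
\delta(x-y,z)A(x,y)-\delta(-y+x,z)B(x,y)=z^{-k}\bigl(\delta(x-y,z)-\delta(-y+x,z)\bigr)G(x,y)=z^{-k}\delta(x-z,y)G(x,y).
\]
Rewrite $\delta(x-z,y)=\delta(y+z,x)$ and apply $(\star_{3})$ (legitimate since $G$ has finitely many negative powers of $x$) to replace $G(x,y)$ by $G(y+z,y)=z^{k}A(y+z,y)$, so the right-hand side becomes $\delta(y+z,x)\,A(y+z,y)$; finally cancel the invertible factor $(z+y)^{l}$ in the weak-associativity hypothesis to identify $A(y+z,y)$ with $C(z,y)$, obtaining $\delta(y+z,x)C(z,y)=\delta(x-z,y)C(z,y)$, which is the asserted identity.

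For the ``only if'' direction, apply $\mathrm{Res}_{z}\circ z^{\,n}$ to the assumed identity for $n\ge 0$. By $(\star_{1})$ the left side becomes $(x-y)^{n}A(x,y)-(x-y)^{n}B(x,y)$, while the right side, $\mathrm{Res}_{z}\bigl(z^{\,n}\delta(x-z,y)C(z,y)\bigr)$, is a finite combination of the coefficients of $z^{-m-1}$ in $C(z,y)$ with $m\ge n$; since $C\in U((y))((z))$ has only finitely many negative powers of $z$, these coefficients vanish once $n$ is large enough, so $(x-y)^{n}A=(x-y)^{n}B$ for all large $n$, which is weak commutativity. For weak associativity I would instead apply $\mathrm{Res}_{x}$: using $(\star_{1})$, $(\star_{2})$ and $(\star_{3})$ one computes $\mathrm{Res}_{x}\bigl(\delta(x-y,z)A(x,y)\bigr)=A(y+z,y)$ and $\mathrm{Res}_{x}\bigl(\delta(x-z,y)C(z,y)\bigr)=C(z,y)$, while the $\mathrm{Res}_{x}$ of the middle term $\delta(-y+x,z)B(x,y)$ involves only the principal part of $B$ in $x$, which is pinned down by the weak commutativity already proved; combining the three pieces yields $(z+y)^{l}A(z+y,y)=(z+y)^{l}C(z,y)$ for a suitable $l$.

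The step I expect to be the main obstacle is not any individual computation but the bookkeeping: at every stage one must track which expansion convention each delta factor carries and in which of the genuinely distinct spaces $U((x))((y))$, $U((y))((x))$, $U((y))((z))$, $U((z))((y))$ each product and each residue actually lives, because several of the ``obvious'' manipulations---distributing a delta factor over a difference, substituting $x=y+z$, cancelling $z^{k}$ or $(z+y)^{l}$---are legitimate only after one checks that the coefficients involved are finite sums. This is also precisely why the weak-associativity relation is stated with the auxiliary factor $(z+y)^{l}$: the bare substitution $A(z+y,y)$ need not lie in the same space as $C(z,y)$, and multiplication by $(z+y)^{l}$ is what clears the offending negative powers.
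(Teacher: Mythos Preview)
The paper does not give its own proof of this lemma; it is quoted verbatim from \cite{Li3} (and the reader is also referred to Proposition~3.4.3 of \cite{LL}). So there is nothing to compare against, and the question is simply whether your argument stands on its own. The overall strategy is indeed the standard one, and the ``only if'' direction for weak commutativity is fine.

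There is, however, a real gap in the ``if'' direction, at exactly the step you yourself flag as delicate. After reaching $z^{-k}\delta(x-z,y)\,G(x,y)$ you pass to $\delta(y+z,x)\,A(y+z,y)$ and then write ``cancel the invertible factor $(z+y)^{l}$ in the weak-associativity hypothesis to identify $A(y+z,y)$ with $C(z,y)$''. Two things go wrong. First, the hypothesis is $(z+y)^{l}A(z+y,y)=(z+y)^{l}C(z,y)$, with the substitution $x\mapsto z+y$ expanded in nonnegative powers of $y$; the expression you have involves the \emph{other} expansion $x\mapsto y+z$, and these two formal series are in general different (indeed, for $A\in U((x))((y))$ the object $A(y+z,y)$ need not even exist). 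Second, $(z+y)^{l}$ is a zero-divisor in $U[[y^{\pm1},z^{\pm1}]]$ (e.g.\ $(z+y)\,\delta(z,-y)=0$), so one cannot cancel it without first locating both sides in a common subspace such as $U((y))((z))$. Your own closing remark that ``the bare substitution $A(z+y,y)$ need not lie in the same space as $C(z,y)$'' is precisely why this step fails as written.

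The repair is to work with $G$ rather than $A$: from weak commutativity $G=(x-y)^{k}A=(x-y)^{k}B$ lies in $U((x))((y))\cap U((y))((x))=U((x,y))$, so $G(y+z,y)\in U((y))((z))$ is honestly defined. Enlarging $l$ if necessary so that $x^{l}G(x,y)$ has no negative powers of $x$, the two substitutions $x\mapsto y+z$ and $x\mapsto z+y$ into $x^{l}G$ coincide; this is what lets you rewrite $(y+z)^{l}G(y+z,y)$ as $(z+y)^{l}z^{k}A(z+y,y)$ and then invoke the hypothesis. Both sides now live in $U((y))((z))$, where $(y+z)^{l}$ genuinely is a unit, and the cancellation is legitimate. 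Your derivation of weak associativity in the converse direction (``pinned down by the weak commutativity already proved'') needs the same kind of care: the clean way is to multiply by a power of $x$ large enough to kill the negative $x$-part of $B$, so that $\mathrm{Res}_{x}$ annihilates the $\delta(-y+x,z)B$ term outright, and then compare the remaining two residues.
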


\

In this way we try to eliminate the Jacobi identity and the notion
of intertwining operator as it is usually defined.

Now, we can redefine the notion of tensor product with a ring
theoretical interpretation:

\definition\label{tensor2}
Let $M$ and $N$ be two $V$-$\,$modules. A pair $(M\oV N, F_z)$, which
consists of a $V$-$\,$module $M\oV N$ and a vertex bilinear map $F_z$
of type $(M,N;M\oV N)$,  is called a {\it tensor product} for the
ordered pair $(M,N)$ if the following universal property holds:
For any $V$-$\,$module $W$ and any vertex bilinear map $I_z$ of type
$(M,N;W)$, there exists a unique $V$-$\,$homomorphism $\varphi$ from
$M\oV N$ to $W$ such that $I_z=\varphi \circ F_z$, where $\varphi$
is extended canonically to a linear map from $(M\oV N)((z))$ to
$W((z))$.

\

Using this universal property of the tensor product with respect
to the vertex bilinear maps, we have:

\begin{proposition}\label{BBBB}
The space $V$-$\,$bilinear$(M,N;W)$ is linearly isomorphic to
$Hom_V(M\oV N, W)$.
\end{proposition}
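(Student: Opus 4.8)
The plan is to exhibit mutually inverse linear maps between $V\text{-}\mathrm{bilinear}(M,N;W)$ and $\mathrm{Hom}_V(M\oV N, W)$, built directly from the universal property in Definition \ref{tensor2}. In one direction, given $\varphi \in \mathrm{Hom}_V(M\oV N, W)$, one sends $\varphi$ to the composite $\varphi \circ F_z$, where $F_z$ is the distinguished vertex bilinear map of type $(M,N;M\oV N)$ coming with the tensor product and $\varphi$ is extended canonically to $(M\oV N)((z)) \to W((z))$; the first step is to check that $\varphi \circ F_z$ is again a vertex bilinear map of type $(M,N;W)$. This is where most of the (routine) work sits: one verifies that each defining condition of Definition \ref{bilin}(1) is preserved under post-composition by a $V$-homomorphism. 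For condition (a) this is immediate since $\varphi$ commutes with $d$ and with $\frac{d}{dz}$; for condition (b), applying $\varphi$ to both sides of \eqref{AA} and \eqref{BB} and using $\varphi(a\z \,\cdot\,) = a\z \varphi(\,\cdot\,)$ gives exactly the corresponding identities for $\varphi\circ F_z$ with the same integers $n,k$. So $\varphi \mapsto \varphi\circ F_z$ is a well-defined linear map $\mathrm{Hom}_V(M\oV N,W) \to V\text{-}\mathrm{bilinear}(M,N;W)$.

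In the other direction, given a vertex bilinear map $I_z$ of type $(M,N;W)$, the universal property of Definition \ref{tensor2} furnishes a \emph{unique} $V$-homomorphism $\varphi_{I} : M\oV N \to W$ with $I_z = \varphi_I \circ F_z$; set $I_z \mapsto \varphi_I$. Linearity of $I_z \mapsto \varphi_I$ follows from the uniqueness clause: if $I_z$ and $I'_z$ induce $\varphi_I$ and $\varphi_{I'}$, then $(\varphi_I + \varphi_{I'})\circ F_z = I_z + I'_z$, so by uniqueness $\varphi_{I+I'} = \varphi_I + \varphi_{I'}$, and similarly for scalars.

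It remains to check that the two assignments are inverse to each other. Starting from $\varphi \in \mathrm{Hom}_V(M\oV N, W)$, we pass to $I_z := \varphi \circ F_z$ and then to the unique $V$-homomorphism $\psi$ with $I_z = \psi\circ F_z$; but $\varphi$ itself satisfies $\varphi\circ F_z = I_z$, so uniqueness forces $\psi = \varphi$. Conversely, starting from a vertex bilinear map $I_z$, we obtain $\varphi_I$ with $I_z = \varphi_I \circ F_z$, and the image of $\varphi_I$ under the first map is precisely $\varphi_I \circ F_z = I_z$. Hence the two maps are mutually inverse linear bijections, proving the asserted isomorphism.

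The one point requiring genuine care — and the main (mild) obstacle — is the well-definedness of $\varphi \circ F_z$ as a \emph{vertex bilinear} map: one must make sure that post-composition by $\varphi$, after extending $\varphi$ coefficientwise to series in $z$, interacts correctly with the vertex-operator action $a\z(\,\cdot\,)$ on $W$ appearing in \eqref{AA}. This is guaranteed because $\varphi$ is a $V$-homomorphism, so $\varphi(a\zx m) = a\zx \varphi(m)$ coefficient by coefficient, and multiplication by the polynomials $(z+x)^n$, $(z-x)^k$ commutes with $\varphi$; everything else is formal. No convergence or finiteness issue arises beyond those already built into the definition of a vertex bilinear map, so the verification is straightforward.
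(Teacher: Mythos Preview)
Your proof is correct and is exactly the standard universal-property argument the paper has in mind: the paper does not actually write out a proof of this proposition, but simply precedes it with ``Using this universal property of the tensor product with respect to the vertex bilinear maps, we have'' and leaves the details implicit. You have supplied precisely those details.
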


\

Now, we start our second (and equivalent) construction of the
tensor product. Take $F_1(M,N)$ as before. Using the equivalence
between the Jacobi identity with the identities (\ref{Aa}) and
(\ref{Bb}), we can redefine $J_1$ as the $\g(V)$-submodule of
$F_1(M,N)$ linearly spanned by all the coefficients in the
following expressions:
\begin{align*}
    (z+x)^{N_{a,v}}\ a\zx (u\ox v)&-(z+x)^{N_{a,v}} \ (a\z u\ox v)\\
    (z-x)^{N_{a,u}}\ a\z (u\ox v)&-(z-x)^{N_{a,u}} \ (u\ox a\z v)\\
(d\, u \ox v) &- \ddx (u \ox v).
\end{align*}

\noi for $a\in V$, $u\in M$, $v\in N$, and in this way we complete the
picture of this ring theoretical   construction.

This second construction is clearly the extension of the well
known tensor product of modules over an associative commutative
ring.

\vskip .7cm

\begin{example}
Let $V$ be an associative commutative algebra with unit. It
is clear that the second construction of
tensor product coincides with the usual tensor product of modules
over a commutative ring.
\end{example}

\begin{example} \label{ex}
We may restrict our definitions and constructions to a graded
vertex algebra $V$, and the category $C^+(V)$ of $\ZZ_+$-$\,$graded
$V$-$\,$modules, together with the intertwining operators that
preserve the gradations (cf. p.132 \cite{FZ} and p.56 \cite{FHL}),
in the sense that if $I_z\in \MNW$ with
$$
I_z(u,v)=\sum_{n\in \ZZ} I_{(n)}(u,v) z^{-n-1},
$$
then for homogeneous elements $u\in M$ and $v\in N$,  it satisfies
$$
I_{(n)}(u,v)\in W(\hbox{deg }u+\hbox{deg }v-n-1).
$$
Since $W\in C^+(V)$, we have that
$$
I_{(k)}(u,v)=0 \quad \hbox{for all }\ \ \  k\geq \hbox{deg }u+\hbox{deg
}v,
$$
proving that pair $(M,N)$ satisfies the kernel intertwining
operator full equality condition. Thus, there exists the tensor
product of them, which is also $\ZZ_+$-graded by defining
deg$(t^n \otimes u\otimes v )=$ deg$(u) + $ deg$(v)-n-1$.

In particular it holds for the category $C^+(V)$ of all known
rational vertex operator algebras $V$ such as $V_L$, associated
with a positive definite even lattice $L$; $L(l,0)$, associated
with an affine Lie algebra with a positive integral level $l$;
$L(c_{p,q},0)$, associated with the Virasoro algebra with central
charge $c_{p,q}$ in the minimal series; and $V^\natural$, the
moonshine module.
\end{example}

\

Let  \textsf{Mod}$_V$  be the category of $V$-$\,$modules with the
maps given by $Hom_V(M,N)$ for $V$-$\,$modules $M$ and $N$. From now
on we shall assume the existence of all the tensor product of
modules that appear. One can restrict all the following results
to certain subcategory where the tensor products always exist.

\begin{proposition}
Let $f:M\to M\s '$ and $g:N\to N\s '$ be homomorphisms of
$V$-$\,$modules. Then there is a unique $V$-$\,$homomorphism denoted by
$f\otimes g:M\oV N\to M\s '\oV N\s '$, with
\begin{equation*}
(f\otimes g)(u\oz v)=f(u)\oz g(v)
\end{equation*}
for all $u\in M$ and $v\in N$.
\end{proposition}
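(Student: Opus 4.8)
The plan is to build the map $f\otimes g$ via the universal property of the tensor product $(M\oV N,\ozz)$, applied to a suitable vertex bilinear map into $M'\oV N'$. Concretely, I would first consider the composition
\[
I_z:=\bigl(F'_z\bigr)\circ(f\times g):M\times N\longrightarrow (M'\oV N')((z)),\qquad (u,v)\longmapsto f(u)\oz' g(v),
\]
where $F'_z=\ozz'$ is the canonical vertex bilinear map of the tensor product $M'\oV N'$. The first step is to check that $I_z$ is again a vertex bilinear map of type $(M,N;M'\oV N')$. This is routine: since $f$ and $g$ are $V$-module homomorphisms, they commute with the $d$-actions and with all the $a\z$-actions, so the translation--derivation property (a) and the two locality/associativity identities (\ref{AA}),(\ref{BB}) for $I_z$ follow immediately from the corresponding properties of $F'_z$ by substituting $f(u)$ for $u$ and $g(v)$ for $v$; the integers $n,k$ needed for $I_z$ can be taken equal to those that work for $F'_z$ at $(f(u),g(v))$, using $N_{a,f(u)}\le$ (anything larger) if one prefers the version of Definition~\ref{bilin}(2) (a cleaner route: use Definition~\ref{bilin}(1), where the integers are merely required to exist).

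Once $I_z$ is known to be a vertex bilinear map, the universal property in Definition~\ref{tensor2} gives a \emph{unique} $V$-homomorphism $\varphi:M\oV N\to M'\oV N'$ with $I_z=\varphi\circ F_z$, i.e.
\[
\varphi(u\oz v)=f(u)\oz' g(v)\qquad\text{for all }u\in M,\ v\in N.
\]
I then \emph{define} $f\otimes g:=\varphi$. Existence is thereby established, and uniqueness is forced: by \leref{} of this section (Lemma~5.1.3 of \cite{Li1}, quoted in the excerpt) the coefficients of the series $u\oz v$, for $u\in M$, $v\in N$, linearly span $M\oV N$, so any $V$-homomorphism satisfying $(f\otimes g)(u\oz v)=f(u)\oz' g(v)$ is determined on a spanning set, hence unique.

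I do not expect a genuine obstacle here; the only point that needs a little care is the bookkeeping of the integers $n,k$ in the vertex-bilinear axioms when passing from $F'_z$ at the point $(f(u),g(v))$ to $I_z$ at $(u,v)$ — and this is precisely why having the three equivalent formulations in Definition~\ref{bilin} (especially version (1), where the exponents need only exist) is convenient. One should also note explicitly that $\varphi$ is well defined on all of $M\oV N$ (not just on the span of coefficients) because it is produced directly by the universal property, not by an ad hoc formula; its compatibility with $d$ and with the $V$-action is part of the conclusion "$\varphi$ is a $V$-homomorphism." This completes the proof.
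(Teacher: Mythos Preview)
Your proposal is correct and follows essentially the same approach as the paper: define the map $(u,v)\mapsto f(u)\oz g(v)$, observe it is a vertex bilinear map (the paper just says ``is easily seen to be''), and invoke the universal property of $(M\oV N,\ozz)$ to obtain the unique $V$-homomorphism. Your write-up is simply a more detailed version of the paper's two-sentence argument.
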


\begin{proof}
The function $F_z:M\times N\to (M\, '\oV N\, ')((z))$ given by
$F_z(u\, ,v)=f(u)\oz \,g(v)$, is easily seen to be a vertex bilinear
map. Then, by universal property, there is a unique
$V$-$\,$homomorphism from $M\oV N$ to $M\s '\oV N\s '$ sending $u\oz
v$ to $f(u)\oz g(v)$.
\end{proof}

\begin{corollary}\label{coco}
Given $V$-$\,$homomorphisms $M\xrightarrow{f} M\s ' \xrightarrow{f'}
M\s ''$ and $N\xrightarrow{g} N\s ' \xrightarrow{g'} N\s ''$, we
have
\begin{equation*}
(f'\otimes g')(f\otimes g)=f' f \otimes g' g.
\end{equation*}
\end{corollary}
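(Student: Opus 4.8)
The plan is to deduce Corollary~\ref{coco} from the previous proposition, namely from the uniqueness half of the universal property of the tensor product, rather than by any direct computation with the explicit construction. Both $(f'\otimes g')(f\otimes g)$ and $f'f\otimes g'g$ are $V$-homomorphisms from $M\oV N$ to $M\s ''\oV N\s ''$, so it suffices to check that they agree on a spanning set of $M\oV N$; by Lemma~5.1.3 (the surjectivity of $F_z$, quoted earlier in the excerpt), the coefficients of $u\oz v$ for $u\in M$, $v\in N$ linearly span $M\oV N$, so it is enough to verify equality on the generating series $u\oz v$.

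First I would compute $(f'\otimes g')\big((f\otimes g)(u\oz v)\big)$. By the defining property of $f\otimes g$ from the preceding proposition, $(f\otimes g)(u\oz v)=f(u)\oz g(v)$, where I use that $f\otimes g$ is extended canonically coefficient-by-coefficient to the series-valued expressions, so it commutes with the formal variable $z$. Applying the same proposition to $f'\otimes g'$ gives $(f'\otimes g')\big(f(u)\oz g(v)\big)=f'(f(u))\oz g'(g(v))=(f'f)(u)\oz (g'g)(v)$. On the other hand, applying the proposition directly to the pair $f'f$ and $g'g$ yields $(f'f\otimes g'g)(u\oz v)=(f'f)(u)\oz (g'g)(v)$. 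Hence the two homomorphisms take the same value on every coefficient of every $u\oz v$, and therefore on all of $M\oV N$.

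The only point that requires a word of care — and it is the main (mild) obstacle — is the legitimacy of passing the homomorphisms through the formal variable: one must note that each of the maps involved is, by definition in the proposition, extended canonically to a linear map on the spaces of $((z))$-valued expressions, and that composition of such canonically extended maps is again the canonical extension of the composition. This is immediate because the extension acts coefficientwise and a $V$-homomorphism sends the $z^{-n-1}$-coefficient of $u\oz v$ (an element of $M\oV N$) to the corresponding coefficient in the target. With this observation the proof is a two-line application of uniqueness in the universal property, exactly parallel to the classical fact that $(f'\otimes g')(f\otimes g)=f'f\otimes g'g$ for modules over a commutative ring.
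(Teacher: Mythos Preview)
Your proof is correct and follows essentially the same approach as the paper: both verify that the two $V$-homomorphisms send $u\oz v$ to $f'(f(u))\oz g'(g(v))$ and then invoke the uniqueness clause of the preceding proposition. The paper's version is simply terser, omitting the explicit remarks about the spanning set and the coefficientwise extension that you spell out.
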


\begin{proof}
Both maps take $u\oz v\mapsto f'(f(u))\oz g'(g(v))$, and so the
uniqueness of such a homomorphism gives the desired equation.
\end{proof}

\begin{theorem}
(a) Given $M$ a $V$-$\,$module, there is a covariant functor
$F_M:\, $\textsf{Mod}$_V \to $ \textsf{Mod}$_V$ defined by
\begin{equation*}
F_M(N)=M\oV N \quad \hbox{ and } \quad F(g)=1_M\otimes g
\end{equation*}
for a $V$-$\,$homomorphism $g:N\to N\s '$.

\

\noi (b) Given $N$ a $V$-$\,$module, there is a covariant functor
$G_N:\,$\textsf{Mod}$_V \to $ \textsf{Mod}$_V$ defined by
\begin{equation*}
G_N(M)=M\oV N \quad \hbox{ and } \quad G(f)=f\otimes 1_N
\end{equation*}
for a $V$-$\,$homomorphism $f:M\to M\s '$.
\end{theorem}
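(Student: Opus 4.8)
The plan is to verify, for part (a), the two defining axioms of a covariant functor --- preservation of identities and preservation of composition --- using only the results already established for the tensor product; part (b) will then follow by the evident symmetry that interchanges the two tensor slots, so I would only spell out (a) and then remark on (b).

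First I would observe that $F_M$ is well defined on objects: under the standing assumption that all the tensor products in question exist, $M \oV N$ is a $V$-module for every object $N$ of \textsf{Mod}$_V$. For a morphism $g : N \to N\s '$, the Proposition immediately preceding the statement (the one producing the unique $V$-homomorphism $f\otimes g$ with $(f\otimes g)(u \oz v) = f(u) \oz g(v)$), applied with $f = 1_M$, furnishes a well-defined $V$-homomorphism $1_M \otimes g : M \oV N \to M \oV N\s '$ determined by $(1_M \otimes g)(u \oz v) = u \oz g(v)$. Hence $F_M(g) := 1_M \otimes g$ is a morphism of \textsf{Mod}$_V$ with the correct source and target, so $F_M$ is well defined on morphisms.

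Next I would check that $F_M$ preserves identities. By the surjectivity lemma quoted above (Lemma 5.1.3 of \cite{Li1}), the coefficients of $F_z(u,v) = u \oz v$ with $u \in M$, $v \in N$ linearly span $M \oV N$, so any $V$-homomorphism out of $M \oV N$ is determined by its values on these coefficients; equivalently, one may apply the uniqueness clause of the universal property in \deref{tensor2} to $I_z = F_z$. Either way, since $1_M \otimes 1_N$ and the identity $1_{M \oV N}$ both fix every $u \oz v$, they agree, i.e. $F_M(1_N) = 1_{F_M(N)}$. Preservation of composition is then immediate from \coref{coco}: for $N \xrightarrow{g} N\s ' \xrightarrow{g'} N\s ''$, putting $f = f' = 1_M$ and using $1_M 1_M = 1_M$ gives $(1_M \otimes g')(1_M \otimes g) = 1_M \otimes (g' g)$, that is $F_M(g') \circ F_M(g) = F_M(g' \circ g)$. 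This proves (a); part (b) is obtained by the same argument with the two arguments of $\oV$ swapped, invoking the same Proposition and \coref{coco} with $g = g' = 1_N$.

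I expect no real obstacle: the whole statement is routine bookkeeping on top of the Proposition defining $f\otimes g$ and \coref{coco}. The only step that warrants a moment's attention is the identity axiom, where one has to invoke the uniqueness of $V$-homomorphisms out of $M \oV N$ --- guaranteed by the surjectivity lemma, or by the universal property applied to $F_z$ itself --- in order to conclude $1_M \otimes 1_N = 1_{M \oV N}$ rather than merely that the two maps agree on generators.
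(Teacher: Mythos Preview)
Your proposal is correct and follows essentially the same approach as the paper: verify that $F_M$ preserves identities (because $1_M\otimes 1_N$ fixes every generator $u\oz v$) and preserves composition via \coref{coco}, then note that (b) is analogous. Your write-up is slightly more explicit than the paper's in justifying why agreement on generators forces $1_M\otimes 1_N = 1_{M\oV N}$, but the argument is the same.
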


\vskip .1cm

\begin{proof}
First, note that $F_M$ preserves identities: $F_M(1_N)=1_M\otimes
1_N$ is the identity $1_{M\oV N}$, because it fixes every
generator. Second, $F_M$ preserves composition:
\begin{equation*}
F_M(g' g)=1_M\otimes g' g=(1_M\otimes g' )(1_M\otimes
g)=F_M(g')F_M(g)
\end{equation*}
by Corollary \ref{coco}. Therefore $F_M$ is a covariant functor.
In a similar way one can prove (b).
\end{proof}

We denote the functor $F_M$ by $M\oV \square$, and the functor
$G_N$ by $\square\oV N$. We shall work with the tensor product
"functor" when in fact we need to assume the existence.

\


\section{Vertex homomorphism and  adjoint isomorphisms}\lbb{second}


\

In this section we introduce the notion of (right) vertex
homomorphism, producing  the "$Hom$" functor for modules over a
vertex algebra, obtaining (from our point of view) an analog of
the $Hom$ functor for modules over an associative commutative
algebra. We prefer to think about it in this way, instead of the
$Hom$ functor in Lie theory. In the first part of this section, we
follow \cite{Li1} and \cite{Li2}.

The most important functors in homological algebra are $Hom$,
tensor product and functors derived from them. In the case of
vertex algebras, we should consider $Vhom$ (and later $Vhom^r$).
We shall show that there is an intimate relationship between
$Vhom$ and tensor, they form an adjoint pair of functors.

\begin{definition} \label{Vhom}
Let $M$ and $N$ be two $V$-$\,$modules. A {\it
vertex homomorphism} from $M$ to $N$ is a liner map
$f_z:M\longrightarrow N((z))$ such that

\vskip .3cm

(a) $f_z(d \, u)=\big(d-\frac{d}{dz}\big)\, f_z(u)$ \ for all $u\in M$.

\vskip .3cm

(b) For any $a\in V$, there exists $k\in\mathbb{N}$ such that
\begin{equation} \label{left-b}
(z-x)^k\ a\Nz (f_x(u))=(z-x)^k \ f_x(a\Mz u)\qquad \textrm{for all }
u\in M.
\end{equation}

\end{definition}

\vskip .3cm

We denote by $Vhom(M,N)$ the space of all vertex homomorphisms
from $M$ to $N$. Define $d$ on $Vhom(M,N)$ given by $(d\,
f)_z=\frac{d}{dz}(f_z)$. Observe that if $V$ is an associative
commutative algebra with unit and $d=0$, and $M$ and $N$ are
modules over it with the corresponding derivations equal to zero,
then (using (a) in the definition) any vertex homomorphism must be
independent of $z$ and $Vhom(M,N)=Hom_V(M,N)$, the usual
homomorphisms of modules over an associative commutative algebra.

One of the motivation for the definition of vertex homomorphism is
the following: let $g_z:M\times N\to W((z))$ be a vertex bilinear
map. Then for any fixed  $u\in M$, the map $f_z(v)=g_z(u,v)$ is a
vertex homomorphism from $N$ to $W$.

Now, we define an action of $V$ on $Vhom(M,N)$ as follows:

\begin{align}\label{vhom-action}
    (a\z f)_y(u)=a\Nzy(f_y(u))-f_y(a\Mzy u - a\Myz u)
\end{align}
which can also be rewritten as (cf. with associator formula
(\ref{assoc}))
\begin{align*}
(a\z f)_y(u)=\mathrm{Res}\, _x \,\Big( \de(x-y,z) a\Nx (f_y(u))-
\de(-y+x,z)f_y(a\Mx u)\Big),
\end{align*}
for $a\in V$ and $u\in M$.

\

\begin{proposition} \label{cheta} Let $a\in V$, $f\in Vhom(M,N)$ and $u\in M$. Then we
have the following properties:

\vskip .3cm

(a) \ $a\z f\in Vhom(M,N)((z))$.

\vskip .3cm

(b) \ $\mathbf{1}\z f=f$.

\vskip .3cm

(c) \  $(da)\z f=\frac{d}{dz}(a\z f)$.

\vskip .3cm

(d) \  $d(a\z f)=(da)\z f + a\z (d\, f)$.

\vskip .3cm

(e) \  For any integer $k$ such that $k\geq N_{a,u}$, we have

\begin{equation*}
(z+y)^k \ (a\z f)_y (u)= (z+y)^k \ a\Nzy(f_y(u)) \ \in N((z,y)).
\end{equation*}

\vskip .2cm

(f) \  For any integer $k$ such that $k\geq N_{a,u}$, an alternative
definition for the action of $V$ in $Vhom(M,N)$ is given by
\begin{equation*}
\ (a\z f)_y(u)= (y+z)^{-k} \big((z+y)^k \ a\Nzy(f_y(u)) \big).
\end{equation*}
similar to the ring theoretical case.

\end{proposition}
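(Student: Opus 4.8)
The plan is to verify the six properties (a)--(f) of \prref{cheta} essentially by direct computation with the delta-function identities, exactly as in the proof of the analogous statements for $F_1(M,N)$ in \prref{p1} and for the $\g(V)$-action there. Throughout I would work with the generating-series form \eqref{vhom-action} of the action, rewriting it whenever convenient in the $\mathrm{Res}_x$-form; the two bracket conventions are related by the standard identity $\mathrm{Res}_x\,\de(x-y,z)\,g(x)=g(y+z)$ (expanded appropriately) together with $\mathrm{Res}_x\,\de(-y+x,z)\,g(x)=g(y+z)$ expanded in the opposite region, so that the difference of the two terms is precisely the ``$a\Mzy u-a\Myz u$'' correction.

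\medskip

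For (e), which is the technical heart, I would use the weak commutativity/locality available for the $V$-module $N$, namely \eqref{locality-m}, together with the defining property \eqref{left-b} of $f$ as a vertex homomorphism. Concretely: pick $k\geq N_{a,u}$ large enough that simultaneously $(z-x)^k\,a\Nz f_x(u)=(z-x)^k f_x(a\Mz u)$ (from \eqref{left-b}) and that the lower-truncation bound $N_{a,u}$ for $a\Mz u\in M((z))$ is met. Multiplying \eqref{vhom-action} by $(z+y)^k$ and using that $(z+y)^k$ kills the negative powers coming from the ``$-f_y(a\Myz u)$'' piece (here one expands $a\Myz u$ in the region $|z|<|y|$, so the polynomial $(z+y)^k=(y+z)^k$ expanded in $|z|<|y|$ clears the pole), one gets that $(z+y)^k(a\z f)_y(u)=(z+y)^k\,a\Nzy f_y(u)$ and that this lies in $N((z,y))$ — one checks the double-Laurent truncation by noting the left side is manifestly in $N((y))((z))$ while, after rewriting via \eqref{left-b}, $(z+y)^k a\Nzy f_y(u)=(z+y)^k\big((z-x)^{-k}(z-x)^k a\Nz f_x(u)\big)\big|_{x\to y+\cdots}$ sits in $N((z))((y))$. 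Part (f) is then immediate: divide (e) by $(z+y)^k$, and prove independence of $k$ by exactly the telescoping argument in the proof of \prref{p1}(d) (for $k_1\geq k_2\geq N_{a,u}$, factor $(z+y)^{k_1}=(z+y)^{k_1-k_2}(z+y)^{k_2}$ and cancel). Parts (c) and (d) follow by applying $\frac{d}{dz}$, resp.\ $d$, to \eqref{vhom-action} and using the translation--derivation axiom \eqref{MD} for $M$ and $N$ and the relation $\frac{d}{dz}\de(x-y,z)=-\frac{d}{dy}\de(x-y,z)$ under the residue, together with $(d\,f)_z=\frac{d}{dz}f_z$; these are the same straightforward manipulations invoked for $F_0(M,N)$ in \prref{p2}. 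Part (b) is a one-line check: with $a=\one$, the unit axioms $\one\Nz=\mathrm{id}$, $\one\Mz=\mathrm{id}$ collapse \eqref{vhom-action} to $(\one\z f)_y(u)=f_y(u)-f_y(0)=f_y(u)$.

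\medskip

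For (a) I must show $a\z f$ is again a vertex homomorphism, i.e.\ verify axioms (a) and (b) of \deref{Vhom} for $(a\z f)_z$ (for each coefficient in $z$). Axiom (a) of \deref{Vhom} for $a\z f$ follows from (c), (d) of the present proposition applied together with the identity $d-\frac{d}{dy}$ acting through \eqref{vhom-action}: expand $(a\z f)_y(d\,u)$, use $f_y(d\,u)=(d-\frac{d}{dy})f_y(u)$ and the module axioms \eqref{MD} for the $a\Mzy$, $a\Myz$ terms in $M$, and collect. Axiom (b) of \deref{Vhom} — the existence, for each $b\in V$, of $k'$ with $(z-w)^{k'}b\Nz(a\z f)_w(u)=(z-w)^{k'}(a\z f)_w(b\Mz u)$ — is the genuinely substantive point and I expect it to be the main obstacle. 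The strategy is the usual one: use (e) to replace $(a\z f)_w(u)$ by $(z_0+w)^{-k}\big((z_0+w)^k a\Nz{}_0{}_{+w} f_w(u)\big)$ with a bookkeeping variable, then apply associativity \eqref{456} of the $V$-module $N$ (to merge $b\N{}_z$ with $a\N{}_{z_0}$ into $(b_z a)$-type terms), weak commutativity \eqref{locality-m} in $N$, and \eqref{left-b} for $f$, exactly paralleling how the Jacobi identity was re-derived for $M\oV N$ in the proof following \prref{p1} (the block of $\de$-function computations around equations \eqref{24}--\eqref{30}). After clearing enough factors of $(z-w)$ and $(z_0+w)$ all the $\de$-functions reorganize and one reads off the desired locality relation; the only real work is choosing the exponents large enough and tracking the regions of expansion, which is routine but tedious. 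This is the step I would write out most carefully.
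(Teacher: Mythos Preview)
Your overall plan is sound and most parts match the paper's argument: (b), (c), (d) are done by the same direct computations, and (e)--(f) are handled exactly as in \prref{p1}(c)--(d), which is what the paper does as well (it simply says ``similar to the proof of (c) and (d) in \prref{p1}'').

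The one genuine divergence is in part (a), specifically the verification of axiom (b) of \deref{Vhom} (locality of $(a_n f)_y$ with every $b\,_{\dot y}$). You propose to grind this out by hand, using (e) to rewrite $(a\z f)_w(u)$ and then pushing through the $\delta$-function computation parallel to \eqref{24}--\eqref{30}. That will work, but the paper takes a much shorter route: it observes that formula \eqref{44}, namely
\[
(a\,_n f)_y=\mathrm{Res}_{\,x}\,\Big((x-y)^n\,a\Nx f_y-(-y+x)^n\,f_y\,a\Mx\Big),
\]
is exactly the $n$-th product of the ``fields'' $a\,_{\dot y}$ and $f_y$ acting from $M$ to $N$. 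Since the pair $(a\,_{\dot y},\,b\,_{\dot y})$ is local by \eqref{locality-m} and the pair $(f_y,\,b\,_{\dot y})$ is local by \eqref{left-b}, Dong's lemma immediately gives that $(a\,_n f)_y$ is local with $b\,_{\dot y}$ for every $b\in V$. This replaces your entire delta-function block by a one-line appeal to a standard lemma; it also makes the result independent of any particular choice of exponents, so you avoid the bookkeeping of ``choosing exponents large enough and tracking the regions of expansion'' that you flag as tedious. Your approach buys self-containment (no external lemma), but at the cost of reproducing Dong's lemma in this special case.

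One small caution on (e): the statement asserts the identity for \emph{every} $k\geq N_{a,u}$, not just for $k$ large enough to also witness \eqref{left-b}. Your sketch begins by enlarging $k$ to do double duty; make sure in the write-up you separate the two roles --- the identity $(z+y)^k(a\z f)_y(u)=(z+y)^k a\Nzy f_y(u)$ needs only $k\geq N_{a,u}$ (it kills the finite sum $a\Mzy u-a\Myz u$), while the membership in $N((z,y))$ is argued separately via \eqref{left-b} with its own exponent.
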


\begin{proof}
(a) If we write $(a\z f)_y=\sum_{n\in\mathbb{Z}}\, (a\,_n f)_y\,
z^{-n-1}$, then
\begin{align}\label{44}
(a\,_n f)_y=\mathrm{Res}_{\, x} \,\Big( (x-y)^n a\Nx (f_y)- (-y+x)^n
f_y(a\Mx)\Big).
\end{align}
Therefore, there exists $N\in \mathbb{N}$ such that $(a\,_n
f)_y=0$ for $n\geq N$.

Now, we should prove that $(a\,_n f)_y(du)=(d-\frac{d}{dy})((a\,_n f)_y(u))$
for all $n\in\ZZ$,
or equivalently $(a\z f)_y(du)=(d-\frac{d}{dy})((a\z f)_y(u))$, that follows by
\begin{align*}
    \frac{d}{dy}\Big((a\z f)_y(u)\Big) & = \frac{d}{dy}
    \mathrm{Res}\, _x \,\Big[ \de(x-y,z) a\Nx (f_y(u))-
\de(-y+x,z)f_y(a\Mx u)\Big]\\
& = \mathrm{Res}\, _x \,\Big[ \Big(\frac{d}{dy}\de(x-y,z)\Big) a\Nx (f_y(u))-
\Big(\frac{d}{dy}\de(-y+x,z) \Big )f_y(a\Mx u)\Big]\\
& \ \ \ +\mathrm{Res}\, _x \,\Big[ \de(x-y,z) a\Nx \Big(\frac{d}{dy}\ f_y(u)\Big)-
\de(-y+x,z)\frac{d}{dy}\Big(f_y(a\Mx u)\Big)\Big]\\
& = - \mathrm{Res}\, _x \,\Big[ \Big(\frac{d}{dx}\de(x-y,z)\Big) a\Nx (f_y(u))-
\Big(\frac{d}{dx}\de(-y+x,z) \Big )f_y(a\Mx u)\Big]\\
& \ \ \ +\mathrm{Res}\, _x \,\Big[ \de(x-y,z) a\Nx \Big(\frac{d}{dy}\ f_y(u)\Big)-
\de(-y+x,z)\frac{d}{dy}\Big(f_y(a\Mx u)\Big)\Big]\\
& =  \mathrm{Res}\, _x \,\Big[ \de(x-y,z) \frac{d}{dx}\Big(a\Nx (f_y(u))\Big)-
\de(-y+x,z) f_y\Big(\frac{d}{dx}(a\Mx u)\Big )\Big]\\
& \ \ \ +\mathrm{Res}\, _x \,\Big[ \de(x-y,z) a\Nx \Big(\frac{d}{dy}\ f_y(u)\Big)-
\de(-y+x,z)\frac{d}{dy}\Big(f_y(a\Mx u)\Big)\Big]
\end{align*}

\begin{align*}
& = -\mathrm{Res}\, _x \,\Big[ \de(x-y,z) a\Nx (f_y(du))-
\de(-y+x,z)f_y(a\Mx (du))\Big]\\
& \ \ \ +\mathrm{Res}\, _x \,\Big[ \de(x-y,z) d\big(a\Nx (f_y(u))\big)-
\de(-y+x,z) d\big(f_y(a\Mx u)\big)\Big]\\
& = d\big((a\z f)_y(u)\big) - (a\z f)_y(du).
\end{align*}

Observe that (\ref{44}) is the
$(n)$-product of the fields $a\y$ and $f_y$. Using that the pairs
$(f_y,b\y)$ and $(a\y,b\y)$ are local, together with Dong's lemma,
we obtain that $b\y$ and $(a\,_n f)_y$ are local. Thus $(a\,_n
f)_y \in Vhom(M,N)$, finishing (a). Part (b) is trivial.

(c) For $a\in V,u\in M, f\in Vhom(M,N)$, we have
\begin{align*}
    ((da)\z f)_y(u)&=\mathrm{Res}_{\,x}\,
    \Big(\de(x-y,z)(da)\Nx(f_y(u)) - \de(-y+x,z)f_y((da)\Mx u)
    \Big)\\
&=\mathrm{Res}_{\,x}\,
    \Big(\de(x-y,z)\frac{d}{dx}(a\Nx(f_y(u))) - \de(-y+x,z)\frac{d}{dx}(f_y(a\Mx
    u))
    \Big)\\
&=\frac{d}{dz}\mathrm{Res}_{\,x}\,
    \Big(\de(x-y,z)\  a\Nx(f_y(u)) - \de(-y+x,z)f_y(a\Mx u)
    \Big)=\frac{d}{dz}(a\z f)_y(u).\\
\end{align*}

(d) For $a\in V,u\in M, f\in Vhom(M,N)$, we have
\begin{align*}
(d(a\z f))_y(u)-(a\z(df))_y&(u)=\frac{d}{dy}(a\z
f)_y(u)-(a\z(df))_y(u)\\
&=\frac{d}{dy}\mathrm{Res}_{\,x}\,
    \Big(\de(x-y,z) \ a\Nx(f_y(u)) - \de(-y+x,z)f_y(a\Mx u)
    \Big)\\
    &\ \ -\mathrm{Res}_{\,x}\,
    \Big(\de(x-y,z)\ a\Nx \Big(\frac{d}{dy}f_y(u)\Big) - \de(-y+x,z)\frac{d}{dy}f_y(a\Mx u)
    \Big)\\
&=\mathrm{Res}_{\,x}\,
    \Big(\Big(\frac{d}{dy}\de(x-y,z)\Big)(a\Nx(f_y(u))) - \Big(\frac{d}{dy}\de(-y+x,z)\Big)f_y(a\Mx u)
    \Big)\\
    &=\frac{d}{dz}((a\z f)_y(u))=((da)\z f)_y(u).
\end{align*}

The proof of (e) and (f) is similar to the proof of (c) and (d) in
Proposition \ref{p1}.
\end{proof}

\begin{theorem}
$Vhom(M,N)$ is a $V$-$\,$module.
\end{theorem}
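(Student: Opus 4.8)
The plan is to extract all the module axioms except associativity directly from Proposition \ref{cheta}. Parts (c) and (d) of that proposition are precisely the translation--derivation identities $(da)\z f=\ddz(a\z f)$ and $d(a\z f)=(da)\z f+a\z(d\,f)$; part (b) is the unit axiom $\vac\z f=f$; and part (a) shows $a\z f\in Vhom(M,N)((z))$, so that the prospective action $V\otimes Vhom(M,N)\to Vhom(M,N)((z))$ is well defined. Thus the whole content of the theorem reduces to the associativity axiom: given $a,b\in V$ and $f\in Vhom(M,N)$, one must exhibit $l\in\NN$ with $(z+x)^l(a\z b)\x f=(z+x)^l\,a\zx(b\x f)$ in $Vhom(M,N)[[z^{\pm1},x^{\pm1}]]$.

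To prove this I would evaluate both sides on an arbitrary $u\in M$, so that the identity becomes one between $N$-valued formal series in $z$, $x$ and the internal variable $y$ of the vertex homomorphisms. Expanding the action by (\ref{vhom-action}), the quantities $\big((a\z b)\x f\big)_y(u)$ and $\big(a\zx(b\x f)\big)_y(u)$ each split into three kinds of terms: a ``purely $N$'' term, in which $a$ and $b$ act successively on $f_y(u)$ through the $V$-module structure of $N$; a ``purely $M$'' term, in which $f_y$ is applied to a suitable iterate of the $V$-actions of $a$ and $b$ on $u$; and mixed terms involving $\delta$-functions. After multiplying by an appropriate $(z+x)^l$, the purely $N$ terms agree by the associativity axiom (\ref{456}) of the module $N$, the purely $M$ terms agree by the associativity axiom of the module $M$ applied inside $f_y$, and the mixed terms cancel by a formal-residue manipulation of $\delta$-functions of exactly the type already carried out in the proof that $M\oV N$ is a $V$-module (the long $\delta$-function computation following Proposition \ref{p1}). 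A more economical route, which I would prefer, is to use the ring-theoretic description of the action in Proposition \ref{cheta}(f) together with the mutual locality of the field $a\y$ acting on $N$ and the field $f_y$, which was established via Dong's lemma inside the proof of Proposition \ref{cheta}(a); then Lemma \ref{l1} upgrades weak associativity together with weak commutativity to the full Jacobi identity for the action, exactly as in the proof of Theorem \ref{bbb}.

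The principal obstacle is the formal-variable bookkeeping. The associativity axioms of $M$ and of $N$ naturally generate powers of $(x+y)$, $(z+x+y)$ and $(z+y)$, and one must verify that these can be collected into a single power of $(z+x)$; in particular one must check that a single $l$ works uniformly in $u$, and this is exactly the point where the field-theoretic content of Proposition \ref{cheta}(a) --- that $a\z f$ is honestly an element of $Vhom(M,N)((z))$ whose coefficients are local with respect to the $V$-action on $N$ --- must be used, rather than a naive appeal to the individual integers $N_{a,u}$ of (\ref{nnnnn}). Once this bookkeeping is in place, the remaining verification is routine.
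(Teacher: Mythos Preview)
Your reduction is right: Proposition~\ref{cheta} supplies unit, translation--derivation, and well-definedness of the action, so the only thing left is associativity (equivalently, Jacobi). Your identification of the obstacle---that the associativity axioms of $M$ and $N$ naturally produce powers of $(z+x+y)$ rather than of $(z+x)$---is also on the mark, and it is exactly this point where your sketch stops short of an argument and the paper's proof does something specific.

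The paper does \emph{not} prove weak associativity and then upgrade; it attacks the Jacobi identity for the action on $Vhom(M,N)$ head-on, using the residue form of the action. Writing $(a\w(b\x f))_z$ via two iterations of the formula $(c\z f)_y=\mathrm{Res}_t[\delta(t-y,z)c^N_t f_y-\delta(-y+t,z)f_y c^M_t]$ yields a four-term expression $\mathrm{Res}_y\mathrm{Res}_t(A-B-C+D)$; similarly for $(b\x(a\w f))_z$. One then chooses a single $n$ so that the four locality relations $(y-z)^n a^N_y f_z=(y-z)^n f_z a^M_y$, $(t-z)^n b^N_t f_z=(t-z)^n f_z b^M_t$, and $(y-t)^n[a,b]=0$ on both $M$ and $N$ hold, and multiplies the Jacobi expression by $r^n w^n x^n$. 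Under the $\delta$-functions present, $r^n w^n x^n$ becomes $(y-z)^n(t-z)^n(y-t)^n$, and the four locality relations collapse $A-B-C+D$ (and $A'-B'-C'+D'$) to the single term $X=(y-z)^n(t-z)^n(y-t)^n a^N_y b^N_t f_z$. The same manipulation on $((a\r b)\x f)_z$ produces the same $X$; one then cancels $r^{-n}w^{-n}x^{-n}$. The whole point of this multiply--collapse--divide trick is precisely to bypass the ``collect into a single power of $(z+x)$'' bookkeeping you flagged: rather than matching three species of terms against three module identities, locality flattens everything to one term.

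Your ``more economical route'' via Lemma~\ref{l1} is conceptually sound, but note that it still requires establishing weak associativity of the $Vhom$-action (Lemma~\ref{l1} needs both locality and weak associativity as input), so it does not actually avoid the computation; and the locality you would need is locality of $a\z$ and $b\w$ as operators on $Vhom(M,N)$, not merely the locality of $a\y$ with $f_y$ on $N$ established in Proposition~\ref{cheta}(a). Both of these can be done, but the paper's direct Jacobi computation is what makes the argument go through without further lemmas.
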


\begin{proof}
We follow the proof of Theorem 6.1.7 in \cite{Li2}. We only
need to prove the Jacobi identity. For any $a,b\in V$ and $f\in
Vhom(M,N)$, there exists $n\in \mathbb{N}$ such that

\begin{align*}
    (y-z)^n a\Ny (f_z)&=(y-z)^n f_z(a\My )\\
(t-z)^n \, b\Nt (f_z) &=(t-z)^n f_z(b\Mt )\\
(y-t)^n a\My(b\Mt) &=(y-t)^n b\Mt(a\My)\\
(y-t)^n a\Ny(b\Nt) &=(y-t)^n b\Nt(a\Ny).
\end{align*}
By definition, we have
\begin{align*}
    (a\w(b\x f))_z&=\mathrm{Res}_y\, \de(y-z,w) a\Ny((b\x
    f)_z)-\mathrm{Res}_y \, \de(-z+y,w) (b\y f)_z(a\My)\\
    &=\mathrm{Res}_y\mathrm{Res}_t\, (A-B-C+D),
\end{align*}
where
\begin{align*}
    A&=\de(y-z,w)\de(t-z,x) a\Ny(b\Nt(f_z))\\
B&=\de(y-z,w)\de(-z+t,x) a\Ny(f_z(b\Mt))\\
C&=\de(-z+y,w)\de(t-z,x) b\Nt(f_z(a\My))\\
D&=\de(-z+y,w)\de(-z+t,x) f_z(b\Mt(a\My)).\\
\end{align*}

\noi Similarly, we have
\begin{equation*}
(b\x(a\w f))_z=\mathrm{Res}_y\mathrm{Res}_t\, (A'-B'-C'+D'),
\end{equation*}
where
\begin{align*}
    A'&=\de(y-z,w)\de(t-z,x) b\Nt(a\Ny(f_z))\\
B'&=\de(y-z,w)\de(-z+t,x) b\Nt(f_z(a\My))\\
C'&=\de(-z+y,w)\de(t-z,x) a\Ny(f_z(b\Mt))\\
D'&=\de(-z+y,w)\de(-z+t,x) f_z(a\My(b\Mt)).\\
\end{align*}

By the properties of the $\de$-function, we obtain
\begin{equation*}
\de(w-x,r)r^nw^nx^n \, Q=\de(w-x,r)(y-z)^n (t-z)^n (y-t)^n \, Q
\end{equation*}
for any $Q\in\{A,B,C,D\}$. Therefore
\begin{align*}
    \de(w-x,r)r^nw^nx^n \, (A-B-C+D) &=\de(w-x,r)(y-z)^n (t-z)^n (y-t)^n \,
    (A-B-C+D)\\
    & =\mathrm{Res}_y\mathrm{Res}_t\,
    \de(w-x,r)\de(y-w,z)\de(t-x,z) \, X,
\end{align*}
where
\begin{equation*}
X=(y-z)^n (t-z)^n (y-t)^n \, a\Ny(b\Nt(f_z)).
\end{equation*}
Similarly, we have
\begin{align*}
    \de(-x+w,r)r^nw^nx^n \, (A'-B'-C'+D')  =\mathrm{Res}_y\mathrm{Res}_t\,
    \de(-x+w,r)\de(y-w,z)\de(t-x,z) \, X.
\end{align*}
Thus
\begin{align}\label{22}
    r^nw^nx^n \bigg[\bigg(\de(w-x,r) a\w(b\x)- &\de(-x+w,r) b\x(a\w)\bigg)\, f\, \bigg]_z=\\
    & =\mathrm{Res}_y\mathrm{Res}_t\,
    \de(w-r,x)\de(y-w,z)\de(t-x,z) \, X.\nonumber
\end{align}

On the other hand, we have by definition
\begin{align*}
    r^nw^nx^n & \de(w-r,x) \big((a\r b)\x f\big)_z=
\\
&=\mathrm{Res}_t r^nw^nx^n \de(w-r,x)\bigg(\de(t-z,x)(a\r
b)\Nt(f_z)-\de(-z+t,x)f_z\big((a\r b)\Mt\big)\bigg).
\end{align*}
But, using that for $W=M$ or $W=N$ we have:
\begin{align*}
    r^n(a\r b)\Wt &= \mathrm{Res}_y \, r^n\bigg(
    \de(y-t,r)a\Wy(b\Wt)-\de(-t+y,r) b\Wt(a\Wy)\bigg)\\
&= \mathrm{Res}_y \, \bigg(
    \de(y-t,r)(y-t)^n a\Wy(b\Wt)-\de(-t+y,r)(y-t)^n b\Wt(a\Wy)\bigg)\\
&= \mathrm{Res}_y \,
    \de(y-r,t)(y-t)^n a\Wy(b\Wt),
\end{align*}
then, we obtain
\begin{align}
   & r^nw^nx^n  \de(w-r,x) \big((a\r b)\x f\big)_z=\nonumber
\\
&=\mathrm{Res}_t \,\mathrm{Res}_y \,
\de(w-r,x)\de(t-z,x)\de(y-r,t) X-\mathrm{Res}_t \,\mathrm{Res}_y
\, \de(w-r,x)\de(-z+t,x)\de(y-r,t) X\nonumber\\
&=\mathrm{Res}_t \,\mathrm{Res}_y \,
\de(w-r,x)\de(t-x,z)\de(y-r,t) X \label{33}
\end{align}
By using the properties of $\de$, we obtain that the product of
the $\de$'s in (\ref{33}) is
\begin{align*}
    \de(w-r,x)&\de(t-x,z)\de(y-r,t)=\de(w-r,x)\de(z+x,t)\de(t+r,y)=\\
&=\de(w-r,x)\de(z+x,t)\de((z+x)+r,y)=\de(w-r,x)\de(z+x,t)\de(z+w,y)=\\
&=\de(w-r,x)\de(t-x,z)\de(y-w,z).
\end{align*}
Therefore, using (\ref{22}) and (\ref{33}), we have
\begin{align*}
    r^nw^nx^n  \de(w-r,x) \big((a\r b)\x f\big)_z=r^nw^nx^n
    \bigg[\bigg(\de(w-x,r) a\w(b\x)- &\de(-x+w,r) b\x(a\w)\bigg)\, f\, \bigg]_z
\end{align*}
Multiplying both sides by $r^{-n}w^{-n}x^{-n}$, we obtain the
Jacobi identity.
\end{proof}

\

Let $M,M\s ',N$ be $V$-$\,$modules. Given $f\in Hom_V(M\s ',M)$ we
define the induced map
\begin{equation*}
f_*:Hom_V(N,M\s ')\to Hom_V(N,M)
\end{equation*}
given by $[f_*(\psi)](v)=f(\psi(v))$ for all $v\in N$. Similarly,
we define
\begin{equation*}
f^*:Hom_V(M,N)\to Hom_V(M\s ',N)
\end{equation*}
given by $[f^*(\phi)](u')=\phi(f(u'))$ for all $u'\in M\s '$. By
standard arguments one can see that  $Hom_V(N,\square)$ (resp.
$Hom_V(\square,N)$) is a covariant (resp. contravariant) functor
from \textsf{Mod}$_V$ to \textsf{Vect}$_{\,\kk}$ (the category of
$\kk$-vector spaces).

\

Now, we are interested in $Vhom$. Let $M,M\s\; '$ and $N$ be
$V$-$\,$modules. Given $f\in Hom_V(M{\s } ',M)$, we can define an induced
map
\begin{equation*}
f_*: Vhom(N,M\s ')\to Vhom(N,M)
\end{equation*}
given by $[f_*(\psi)]_z(v)=f(\psi_z(v))$ for all $v\in N$.
Similarly, we define
\begin{equation*}
f^*: Vhom(M,N)\to Vhom(M\s ',N)
\end{equation*}
given by $[f^*(\phi)]_z(u')=\phi_z(f(u'))$ for all $u'\in M\s '$.
A simple computation shows that $f_*(\psi)\in Vhom(N,M)$ and
$f^*(\phi)\in Vhom(M\s ', N)$.

Observe that we use the same notation  $f_*$ and $f^*$ for
$Vhom$ and $Hom_V$.

\begin{proposition}\label{54}
(a) The maps $f_*$ and $f^*$ are $V$-$\,$homomorphisms.

\vskip .2cm

\noi (b) $Vhom(N,\square)$ is a covariant functor from
\textsf{Mod}$_V$ to \textsf{Mod}$_V$.

\vskip .2cm

\noi (c) $Vhom(\square,N)$ is a contravariant functor from
\textsf{Mod}$_V$ to \textsf{Mod}$_V$.
\end{proposition}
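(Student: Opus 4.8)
The plan is a direct verification resting on just two facts about $f\in Hom_V(M',M)$: that it intertwines the $V$-actions and commutes with $d$, and that it is linear. First I would establish part (a) for $f_*$. Fix $\psi\in Vhom(N,M')$. Compatibility of $f_*$ with $d$ is immediate: from $(d\psi)_z=\tfrac{d}{dz}\psi_z$ and linearity of $f$ we get $[f_*(d\psi)]_z(v)=f\big(\tfrac{d}{dz}\psi_z(v)\big)=\tfrac{d}{dz}f(\psi_z(v))=[d\,(f_*\psi)]_z(v)$. For compatibility with the $V$-action, I would expand $[f_*(a\z\psi)]_y(v)=f\big((a\z\psi)_y(v)\big)$ using (\ref{vhom-action}); this breaks into $f\big(a_{\dot{z+y}}(\psi_y(v))\big)$, where $a_{\dot{z+y}}$ denotes the $(z+y)$-action on $M'$, minus $f\big(\psi_y(a\Nzy v-a\Nyz v)\big)$. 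Since $f$ intertwines the $V$-action on $M'$ with that on $M$, the first term equals $a_{\dot{z+y}}\big(f(\psi_y(v))\big)=a_{\dot{z+y}}\big([f_*\psi]_y(v)\big)$ (now the action on $M$), while the second is literally $[f_*\psi]_y(a\Nzy v-a\Nyz v)$; comparing with (\ref{vhom-action}) again, the result is exactly $[(a\z f_*\psi)]_y(v)$.

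The computation for $f^*$ is parallel. For $\phi\in Vhom(M,N)$ and $u'\in M'$, I would expand $[f^*(a\z\phi)]_y(u')=(a\z\phi)_y(f(u'))$ by (\ref{vhom-action}); this time $f$ appears inside $\phi_y(\cdot)$ alongside the action on $M$, so the intertwining property is used in the form $a_{\dot w}f(u')=f(a_{\dot w}u')$ to carry $f$ out through the outer map $\phi_y$. One is left with $a\Nzy\big([f^*\phi]_y(u')\big)-[f^*\phi]_y(a_{\dot{z+y}}u'-a_{\dot{y+z}}u')$, which by (\ref{vhom-action}) applied to $f^*\phi$ is $[(a\z f^*\phi)]_y(u')$; compatibility with $d$ is again a one-line consequence of linearity. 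Since $f_*(\psi)\in Vhom(N,M)$ and $f^*(\phi)\in Vhom(M',N)$ are already noted before the statement, this completes (a).

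For (b) and (c) only the functor axioms remain, and they are formal. For $Vhom(N,\square)$: the object map sends a $V$-module $M$ to the $V$-module $Vhom(N,M)$ (a $V$-module by the theorem above) and a $V$-homomorphism $f:M'\to M$ to $f_*$, which is a $V$-homomorphism by (a); one checks $(1_M)_*=1_{Vhom(N,M)}$ from $[(1_M)_*(\psi)]_z=\psi_z$, and $(g f)_*=g_*\,f_*$ from $[(g f)_*(\psi)]_z(v)=g\big(f(\psi_z(v))\big)$ for composable $f:M''\to M'$, $g:M'\to M$; hence $Vhom(N,\square)$ is a covariant functor from $\textsf{Mod}_V$ to $\textsf{Mod}_V$. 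For $Vhom(\square,N)$: a morphism $f:M'\to M$ is sent to $f^*:Vhom(M,N)\to Vhom(M',N)$, which reverses arrows, and $(1_M)^*=1$ together with $(g f)^*=f^*\,g^*$ follows in the same one-line way from $[f^*(\phi)]_z(u')=\phi_z(f(u'))$, so $Vhom(\square,N)$ is a contravariant functor from $\textsf{Mod}_V$ to $\textsf{Mod}_V$.

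The one place requiring care — the "main obstacle" in an otherwise mechanical argument — is the bookkeeping in (a): the action (\ref{vhom-action}) on $Vhom$ combines the $V$-action on the \emph{target} module with the $V$-action on the \emph{source} module, so in each of the two summands one must invoke the intertwining property of $f$ for the correct module — $M'$ in its "target" role in the $f_*$ computation, and $M'$ in its "source" role in the $f^*$ computation. Once the modules are tracked correctly, every step reduces to a single use of "$f$ is a $V$-homomorphism" together with linearity of $f$.
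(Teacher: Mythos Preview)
Your proposal is correct and follows essentially the same approach as the paper: a direct verification using the definition of the action (\ref{vhom-action}) on $Vhom$, the $V$-intertwining property of $f$, and linearity, followed by the formal check of the functor axioms. The paper's proof is in fact slightly terser (it omits the details for $f^*$), but the computations you outline match it step for step.
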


\begin{proof}
(a) It is clear that $f_*$ is additive, and
\begin{equation*}
[f_*(d\psi)]_z(v)=f((d\psi)_z(v))=\frac{d}{dz}f(\psi_z(v))=[d(f_*(\psi))]_z(v).
\end{equation*}
So, we should check that $f_*(a\z \psi)=a\z [f_*(\psi)]$ for any
$a\in V$. But, for all $v\in N$, we have
\begin{equation*}
[f_*(a\z \psi)]_x(v)=f\big((a\z
\psi)_x(v)\big)=f\big(a\zx(\psi_x(v))-\psi_x(a\zx v - a\xz
v)\big).
\end{equation*}
On the other hand, we have
\begin{align*}
     \big(a\z[f_*(\psi)]\big)_x(v) & =
     a\zx\big([f_*(\psi)]_x(v)\big )- [f_*(\psi)]_x(a\zx v -a\xz
     v)\\
     & = a\zx \big( f(\psi_x(v))\big)-f\big( \psi_x(a\zx v- a\xz
     v)\big) \\
     & = f\big(a\zx(\psi_x(v))\big )- f\big (\psi_x(a\zx v - a\xz
v)\big),
\end{align*}
proving that $f_*$ is a $V$-$\,$homomorphism. Similarly, by a simple
computation, one can check
that $f^*$ is also a $V$-$\,$homomorphism.

By standard computations one can see that these are functors,
proving (b) and (c). For example, if $f\in Hom_V(M\s ', M)$ and
$g\in Hom_V(M,M\s '')$, then for $\psi\in Vhom_V(N,M\s ')$ we have
\begin{align*}
  \big[g_*\big(f_*(\psi)\big)\big]_z(v)=g\big([f_*(\psi)]_z(v)\big
  )=g\big(f(\psi_z(v))\big)=(gf)(\psi_z(v))=\big[(gf)_*(\psi)\big]_z(v),
\end{align*}
for all $v\in N$.
\end{proof}

Now, we shall present some properties of $Vhom$ (cf. Proposition
6.2.5 in \cite{Li2}).

\begin{proposition}
Let $M$ be a $V$-$\,$module. Then $Vhom(V,M)$ is isomorphic to $M$ as
a $V$-$\,$module. The isomorphism is given by $\Psi_M:Vhom(V,M)\to M$
with $\Psi_M(f)=\hbox{Res}_{\,z}\ z^{-1} \, f_z(\vac)$. Moreover, if
$g\in Hom_V(M,N)$, then the following diagram commutes:

$$\begin{CD}
 Vhom(V,M) @>\Psi_M>> M \\
 @Vg_*VV  @VVgV\\
 Vhom(V,N) @>>\Psi_N> N
\end{CD}$$

\noi proving that $\Psi$ is a natural isomorphism from
$Vhom(V,\square)$ to the identity functor on \textsf{Mod}$_V$.
\end{proposition}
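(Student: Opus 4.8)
The plan is to produce an explicit two-sided inverse $\Phi_M$ of $\Psi_M$ and then verify that $\Psi_M$ is a morphism of $V$-$\,$modules and that the naturality square commutes. Define $\Phi_M\colon M\to Vhom(V,M)$ by $[\Phi_M(m)]_z(a)=e^{zd}\big(a\mMz m\big)$ for $a\in V$, where $a\mMz m$ denotes $a\Mz m$ with $z$ replaced by $-z$. I would first check that $\Phi_M(m)$ really is a vertex homomorphism. Axiom (a) of Definition~\ref{Vhom} is a short computation using $\frac{d}{dz}e^{zd}=de^{zd}=e^{zd}d$ together with the module translation--derivation axiom (\ref{MD}) (which gives $d(a\Mz m)=\frac{d}{dz}(a\Mz m)+a\Mz(dm)$); it produces $[\Phi_M(m)]_z(da)=\big(d-\frac{d}{dz}\big)[\Phi_M(m)]_z(a)$. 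Axiom (b) follows from the module associativity axiom (\ref{456}): specializing it one gets, for $l$ large, $(z-x)^l\,(a\z u)\,{}^{M}_{\dot{-x}}\,m=(z-x)^l\,a\,{}^{M}_{\dot{z-x}}\big(u\,{}^{M}_{\dot{-x}}\,m\big)$, and applying $e^{xd}$ to both sides, using the standard $e^{xd}$-conjugation formula $a\Mz(e^{xd}w)=e^{xd}\big(a\,{}^{M}_{\dot{z-x}}\,w\big)$ (a consequence of (\ref{MD})), turns this into $(z-x)^l\,a\Mz\big([\Phi_M(m)]_x(u)\big)=(z-x)^l\,[\Phi_M(m)]_x(a\z u)$, which is axiom (b). Finally $\vac\Mz m=m$ gives $\Psi_M(\Phi_M(m))=\mathrm{Res}_z\,z^{-1}e^{zd}m=m$, so $\Psi_M$ is surjective.

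For injectivity I would first note that every $f\in Vhom(V,M)$ satisfies $f_z(\vac)=e^{zd}\,\Psi_M(f)$. Indeed $d\vac=0$ (from $e^{zd}\vac=\vac\z\vac=\vac$), so axiom (a) at $u=\vac$ gives $\frac{d}{dz}f_z(\vac)=d\,f_z(\vac)$; writing $f_z(\vac)=\sum_n g_n z^n\in M((z))$ and comparing coefficients of $z^{n-1}$ forces $g_n=0$ for $n<0$ and $g_n=\frac1{n!}d^n g_0$ for $n\ge0$, that is $f_z(\vac)=e^{zd}g_0$ with $g_0=\Psi_M(f)$. Now assume $\Psi_M(f)=0$, so $f_x(\vac)=0$. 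Fix $a\in V$; axiom (b) at $u=\vac$ gives $(z-x)^k\,f_x(a\z\vac)=(z-x)^k\,a\Mz\big(f_x(\vac)\big)=0$ for some $k$, and since $a\z\vac=e^{zd}a$ this reads $(z-x)^k\,f_x(e^{zd}a)=0$ in $M((x))[[z]]$. But $z-x=-x(1-z/x)$ is a \emph{unit} of $M((x))[[z]]$ ($x$ is invertible in $M((x))$ and $(1-z/x)^{-1}=\sum_{n\ge0} z^n x^{-n}\in M((x))[[z]]$), so $f_x(e^{zd}a)=0$, and the coefficient of $z^0$ gives $f_x(a)=0$. Hence $f=0$, $\Psi_M$ is injective, and therefore bijective.

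Next I would check that $\Psi_M$ is a homomorphism of $V$-$\,$modules, which upgrades this bijection to an isomorphism. For the $d$-action: $(df)_z=\frac{d}{dz}f_z$, so $\Psi_M(df)=\mathrm{Res}_z\,z^{-1}\frac{d}{dz}(e^{zd}m)=dm=d\,\Psi_M(f)$. For the action of $a\in V$ defined in (\ref{vhom-action}): evaluating at $u=\vac$, the correction term $f_y\big(a\zy\vac-a\yz\vac\big)$ vanishes, because $a\,{}_{\dot w}\vac=e^{wd}a\in V[[w]]$, so the two substitutions $w=z+y$ and $w=y+z$ give the same (polynomial) expression in $z,y$; hence $(a\z f)_y(\vac)=a\Mzy\big(f_y(\vac)\big)=a\Mzy(e^{yd}m)=e^{yd}(a\Mz m)$ by the conjugation formula, and $\mathrm{Res}_y\,y^{-1}$ of this equals $a\Mz m$. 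Thus $\Psi_M(a\z f)=a\Mz m=a\Mz\Psi_M(f)$, as needed.

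Finally, naturality is immediate: for $g\in Hom_V(M,N)$ and $f\in Vhom(V,M)$ we have $[g_*(f)]_z(\vac)=g(f_z(\vac))$, so $\Psi_N(g_*(f))=\mathrm{Res}_z\,z^{-1}g(f_z(\vac))=g\big(\mathrm{Res}_z\,z^{-1}f_z(\vac)\big)=g(\Psi_M(f))$ since $g$ is linear and commutes with taking residues; hence $\Psi_N\circ g_*=g\circ\Psi_M$, and each $\Psi_M$ being an isomorphism, $\Psi$ is a natural isomorphism from $Vhom(V,\square)$ to the identity functor on \textsf{Mod}$_V$. I expect the one genuinely delicate step to be the verification that $\Phi_M(m)$ obeys axiom (b): there one must feed module associativity (\ref{456}) through the $e^{xd}$-conjugation formula while keeping track of which formal variable each negative power is expanded in; the remaining steps are routine manipulations with the axioms and with $\mathrm{Res}$.
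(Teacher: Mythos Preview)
Your proof is correct and follows essentially the same route as the paper: both define the inverse $\Phi_M(m)_z(a)=e^{zd}(a\mMz m)$, verify it lands in $Vhom(V,M)$ via module associativity and $e^{xd}$-conjugation, establish $f_z(\vac)=e^{zd}\Psi_M(f)$ from axiom~(a), and check naturality the same way. The only noticeable difference is in how bijectivity is finished off: the paper shows $f=\Phi_M(\Psi_M(f))$ by taking the identity $(z-x)^k f_x(a\z\vac)=(z-x)^k a\Mz(f_x(\vac))$ and setting $z=0$, whereas you argue injectivity by observing that $(z-x)$ is a unit in $M((x))[[z]]$; both are equally short and rest on the same input. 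One small point worth making explicit in your write-up of axiom~(b) for $\Phi_M(m)$: the integer $l$ coming from module associativity (\ref{456}) can be chosen to depend only on $a$ and $m$ (Remark~\ref{module}(c)), which is exactly what is needed so that the resulting $k$ in Definition~\ref{Vhom}(b) is independent of $u\in V$.
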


\begin{proof}
We define a map
\begin{equation*}
\Phi:M\longrightarrow Vhom(V,M)
\end{equation*}
given by $[\Phi(u)]_z(a)=e^{zd}(a\mz u)$, for $u\in M$. We should
check that $[\Phi(u)]_z\in Vhom(V,M)$. By definition, we have
\begin{align*}
    [\Phi(u)]_z(da)&=-e^{zd}\Big(\frac{d}{dz}(a\mz u)\Big)=
    \Big(\frac{d}{dz}e^{zd}\Big)
    (a\mz u)-\frac{d}{dz}(e^{zd}(a\mz u))\\
    & =\Big(d-\frac{d}{dz}\Big)(e^{zd}(a\mz
    u))
    =\Big(d-\frac{d}{dz}\Big)[\Phi(u)]_z(a).
\end{align*}
Using associativity, for any $b\in V$, there is $k\in \mathbb{N}$
depending only on $b$ and $u$ (see Remark \ref{module}) such that
\begin{equation*}
(z+x)^k b\zx (a\x u)=(z+x)^k (b\z a)\x u
\end{equation*}
for all $a\in V$. Thus,
\begin{align*}
    (z+x)^k b\zx \big( e^{xd}[\Phi(u)]_{-x}(a)\big)=(z+x)^k e^{xd}
    \big ([\Phi(u)]_{-x}(b\z
    a)\big).
\end{align*}
Using conjugation by $e^{xd}$, we get $(z+x)^k b\z
([\Phi(u)]_{-x}(a))=(z+x)^k [\Phi(u)]_{-x}(b\z a)$ for all $a\in
V$, proving that $[\Phi(u)]_z\in Vhom(V,M)$. It is easy to see
using the commutator formula for modules (\ref{commutator-m}) that
the linear map $\Phi$ from $M$ to $Vhom(V,M)$ is a
$V$-$\,$homomorphism.

Conversely,  given any $f_z\in Vhom(V,M)$, if we write
 $f_z=\sum_{n\in \mathbb{Z}} f_n
z^{-n-1}$, then by definition $f_n(da)=d(f_n(a))+n f_{n-1}(a)$. In
particular,
\begin{equation}\label{77}
d(f_n(\mathbf{1}))=-n f_{n-1}(\mathbf{1}).
\end{equation}
But, $f_z(\mathbf{1})$ is a Laurent serie, hence
$f_m(\mathbf{1})=0$ for some nonnegative $m$. Therefore, it
follows that $f_n(\mathbf{1})=0$ for all nonnegative $n$. Thus,
$f_z(\mathbf{1})$ involves only nonnegative powers of $z$, and
using (\ref{77}), one can easily get that
$f_z(\mathbf{1})=e^{zd}u$, where
$u=\Psi_M(f)=f_{-1}(\mathbf{1})\in M$. Then
\begin{align*}
    \Psi_M(a\x f)& =\hbox{Res}_{\,z} \, z^{-1} (a\x f)_z(\vac)=
    \hbox{Res}_{\,z} \, z^{-1} a\xz (f_z(\vac))=
    \hbox{Res}_{\,z} \, z^{-1} a\xz (e^{zd}\, u)\\
    & =a\x u
    = a\x(\Psi_M(f)),
\end{align*}
proving that $\Psi_M$ is a $V$-$\,$homomorphism.
Now, for any $a\in V$ there
exists $k\in \mathbb{N}$ such that $(z-x)^k f_z(a\x)=(z-x)^k
a\x(f_z)$. Then
\begin{align*}
    z^k f_z(a)& =  (z-x)^k f_z(a\x \mathbf{1})\, _{|_{x=0}}=
     (z-x)^k a\x(f_z( \mathbf{1}))\, _{|_{x=0}}= (z-x)^k a\x (e^{zd}u)
     \, _{|_{x=0}}\\
& = (z-x)^k e^{zd}(a\xmz u)\, _{|_{x=0}}=z^k e^{zd}(a\mz u).
\end{align*}
Thus, $f=\Phi(\Psi_M(f))$. Therefore $Vhom(V,M)$ is isomorphic to
$M$ as a $V$-$\,$module. Finally, we have
\begin{equation*}
\Psi_N(g_*(\psi))=\hbox{Res}_z \, z^{-1} \, [g_*(\psi)]_z(\vac)=
\hbox{Res}_z \, z^{-1} \, g(\psi_z(\vac))=g(\Psi_M(\psi)),
\end{equation*}
finishing the proof.
\end{proof}

\

The intertwining operators of type $\MNW$ are usually related to
$Hom_V(M,Vhom(N,W))$ and $Hom_V(N,Vhom^r(M,W))$ as in \cite{Li1}
 and \cite{DLM}. But we shall eliminate the notion of
intertwining operator in the statement of Theorem \ref{TT} (and
Theorem \ref {TT22} for $Vhom^r$), following the ring theoretical
point of view. The following result could be thought as a
motivation for the definition of $Vhom$ and the action of $V$ on
it.

\

\begin{theorem}\label{TT} {\rm (Adjoint isomorphism, first version)}
Let $M,N$ and $W$ be $V$-$\,$modules. Then there is a natural
isomorphism of $\kk$-vector spaces
\begin{equation*}
\tau_{_{M,N,W}}:Hom_V(M\oV N, W)\longrightarrow Hom_V(M,Vhom(N,W))
\end{equation*}
sending $f:M\oV N\to W$ to
\begin{equation*}
\big[\tau_{_{M,N,W}}(f)\big] (u)\, :\, v\mapsto f(u\oz v)
\end{equation*}
for $u\in M$ and $v\in N$. In more detail, fixing any two of
$M,N,W$, each $\tau_{_{M,N,W}}$ is a natural isomorphism:
\begin{align*}
    Hom_V(\,\square \oV N, W) & \longrightarrow Hom_V(\, \square \,,
    Vhom(N,W))\\
Hom_V(M \oV \square \, , W) & \longrightarrow Hom_V(M ,
    Vhom(\,\square ,W))\\
Hom_V(M \oV N , \square\,) & \longrightarrow Hom_V(M ,
    Vhom(N ,\square\, )).
\end{align*}
For example, if $f\in Hom_V(M,M'\,)$, then the following diagram
commutes:
\begin{equation}\label{natu}
\begin{CD}
 Hom_V(M\s ' \oV N,W) @> \tau_{_{M',N,W}} >> Hom_V(M\s ',Vhom(N,W))  \\
 @V (\,f\,\otimes \; 1_N)^{\, *} VV  @VV f^{\,*} V\\
 Hom_V(M \oV N,W) @>> \tau_{_{M,N,W}} > Hom_V(M,Vhom(N,W))
\end{CD}
\end{equation}
\end{theorem}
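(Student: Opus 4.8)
The plan is to establish the adjoint isomorphism by explicitly constructing the inverse map and checking naturality, exploiting the ring-theoretic machinery already in place. First I would verify that the assignment $f \mapsto \tau_{_{M,N,W}}(f)$ is well defined: given $f \in Hom_V(M\oV N, W)$, for each fixed $u \in M$ the map $v \mapsto f(u\oz v)$ is a linear map $N \to W((z))$, and I must check it lies in $Vhom(N,W)$. This uses the fact that $\ozz$ is a vertex bilinear map (equivalently an intertwining operator) of type $(M,N;M\oV N)$: property (a) of Definition \ref{Vhom} follows from the translation--derivation property $F_z(du,v)=\frac{d}{dz}F_z(u,v)$ together with $\varphi = f$ commuting with $\frac{d}{dz}$, and property (b), the locality estimate $(z-x)^k\, a\Wz f_x(u\otimes v) = (z-x)^k\, f_x(a\Mz(u\otimes v))$ in the target variable, follows from applying $f$ to identity \eqref{BB} (the ``commutativity of left multiplication'' half of the vertex bilinear axioms) for $F=\ozz$. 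Next I would check that $u \maps\big[\tau_{_{M,N,W}}(f)\big](u)$ is itself a $V$-homomorphism $M \to Vhom(N,W)$, i.e. it intertwines the $V$-action and commutes with $d$: this amounts to comparing the action \eqref{vhom-action} of $V$ on $Vhom(N,W)$ against the image of the associator formula \eqref{Iassoc} for $\ozz$ under $f$, and the $d$-compatibility against \eqref{ID}; both are straightforward expansions of $\delta$-functions.

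The main work is constructing the inverse. Given $g \in Hom_V(M, Vhom(N,W))$, I would define a bilinear map $G_z : M \times N \to W((z))$ by $G_z(u,v) = [g(u)]_z(v)$, and show $G_z$ is a vertex bilinear map of type $(M,N;W)$. Axiom (a) of Definition \ref{bilin} follows from Definition \ref{Vhom}(a) applied to $g(u)$ for the second component, and from $g$ commuting with $d$ (combined with the action of $d$ on $Vhom(N,W)$ being $(df)_z=\frac{d}{dz}f_z$) for the first. The locality estimate \eqref{BB} is precisely Definition \ref{Vhom}(b) for $g(u)$; and \eqref{AA} is exactly the statement that $g$ intertwines the $V$-actions, once one unwinds the action \eqref{vhom-action} of $V$ on $Vhom(N,W)$ against the module action on $M$, using Proposition \ref{cheta}(e)--(f) to rewrite $(a\z g(u))_x(v)$ in the form $(z+x)^{-k}\big((z+x)^k\, a\Wzx([g(u)]_x(v))\big)$ for $k$ large. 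Having produced $G_z \in V\text{-Bilinear}(M,N;W)$, the universal property of the tensor product (Definition \ref{tensor2}) yields a unique $V$-homomorphism $\widetilde{g}: M\oV N \to W$ with $G_z = \widetilde{g}\circ F_z$, i.e. $\widetilde{g}(u\oz v) = [g(u)]_z(v)$. The assignment $g \mapsto \widetilde{g}$ is the candidate inverse, and $\tau \circ (\,g \mapsto \widetilde g\,) = \mathrm{id}$ and $(\,g\mapsto\widetilde g\,)\circ \tau = \mathrm{id}$ both follow immediately from the explicit formulas and the uniqueness clause in the universal property (two $V$-homomorphisms out of $M\oV N$ agreeing on all $u\oz v$ coincide, by Lemma 5.1.3 / the surjectivity of $\ozz$). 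Linearity of $\tau$ in $f$ is clear, so $\tau_{_{M,N,W}}$ is a linear isomorphism.

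For naturality it suffices, by symmetry of the three displayed cases, to check the square \eqref{natu}; the other two squares (varying $N$ via $1_M\otimes g$, or varying $W$ directly) are entirely analogous. Chasing $h \in Hom_V(M\s'\oV N, W)$ around \eqref{natu}: going down-then-right, $(f\otimes 1_N)^*(h) = h\circ(f\otimes 1_N)$, and $\tau_{_{M,N,W}}$ sends this to the map $u \mapsto \big(v \mapsto h((f\otimes 1_N)(u\oz v))\big) = h(f(u)\oz v)$; going right-then-down, $\tau_{_{M',N,W}}(h)$ is $u' \mapsto (v\mapsto h(u'\oz v))$, and $f^*$ precomposes with $f$, giving $u \mapsto (v \mapsto h(f(u)\oz v))$. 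These agree, so the square commutes. I would also remark that $f^*$ on the right is exactly the induced map on $Vhom$ defined just before Theorem \ref{TT}, and that it is a $V$-homomorphism by Proposition \ref{54}(a), which is what makes the right-hand vertical arrow live in the correct category.

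I do not expect a genuine obstacle here: the content is the equivalence, already proven in Theorem \ref{bbb} and Lemma \ref{l1}, between ``intertwining/vertex-bilinear data'' in two variables and ``$Vhom$-valued $V$-homomorphism'' data in one variable, together with the universal property. The one point demanding care is matching the \emph{same} $\delta$-function/power-of-$(z\pm x)$ conventions on both sides --- in particular, verifying that axiom \eqref{AA} for $G_z$ is genuinely equivalent to $g$ being a $V$-module map into $Vhom(N,W)$ rather than merely implied by it --- which is handled cleanly by invoking Proposition \ref{cheta}(f) to pass between the $\delta$-function form \eqref{vhom-action} of the action and its ``multiply by $(z+x)^k$, then divide'' form. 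Everything else is bookkeeping with residues and the uniqueness half of the universal property.
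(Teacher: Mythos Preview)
Your proposal is correct and follows essentially the same route as the paper's proof: the paper first invokes Proposition~\ref{BBBB} to identify $Hom_V(M\oV N,W)$ with $V\text{-}Bilinear(M,N;W)$ and then establishes the bijection with $Hom_V(M,Vhom(N,W))$ by the same pair of constructions $\phi\mapsto F^\phi$ and $F\mapsto \phi^F$ you describe, using the associator formula \eqref{Iassoc} and the action \eqref{vhom-action} rather than Proposition~\ref{cheta}(e)--(f), and checks naturality in $M$ by the same diagram chase. One small slip: when verifying Definition~\ref{Vhom}(a) for $v\mapsto f(u\oz v)$ you need the \emph{second} translation--derivation identity $F_z(u,dv)=(d-\frac{d}{dz})F_z(u,v)$, not $F_z(du,v)=\frac{d}{dz}F_z(u,v)$.
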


\begin{proof}
(cf. Theorem 7.2.1 in \cite{Li1}) Using Proposition
\ref{BBBB}, we have that $Hom_V(M\oV N,W)$ is linearly isomorphic
(in a natural way) to $V\hbox{-}\,Bilinear(M,N;W)$, practically by
definition. It remains to show that
\begin{equation*}
V\hbox{-}\,Bilinear(M,N;W)\simeq Hom_V(M,Vhom(N,W)).
\end{equation*}

Let $\phi\in Hom_V(M,Vhom(N,W))$. Then we define a $\kk$-bilinear
map $F^{\, \phi}_z:M\times N\to W((z))$ given by
\begin{equation*}
F^{\,\phi}_z(u,v)=[\phi(u)]_z(v) \qquad \hbox{ for } u\in M, v\in N.
\end{equation*}
By definition, we have
$$
F^{\,\phi}_z(du,v)=[\phi(du)]_z(v)=[d\,\phi(u)]_z(v)
=\frac{d}{dz}[\phi(u)]_z(v)=\frac{d}{dz}F^{\,\phi}_z(u,v)
$$
and
$$
F^{\,\phi}_z(u,dv)=[\phi(u)]_z(dv)=\Big(d-\frac{d}{dz}\Big)[\phi(u)]_z(v)
= \Big(d-\frac{d}{dz}\Big)F^{\,\phi}_z(u,v).
$$

\vskip .2cm

\noi Furthermore, for $a\in V, u\in M$ and $v\in N$, by the action on
$Vhom$ (\ref{vhom-action}), we have

\vskip -.2cm

\begin{align*}
F^{\,\phi}_x(a\z u,v)&=[\phi(a\z u)]_x(v)=[a\z
\phi(u)]_x(v)=a\zx([\phi(u)]_x(v))-[\phi(u)]_x(a\zx v-a\xz v)\\
&=a\zx(F^{\,\phi}_x(u,v))-F^{\,\phi}_x(u,a\zx v- a\xz v).
\end{align*}

\vskip .2cm

\noi Thus, for all $a\in V$ and $v\in N$, there exists $n\in \NN$
($n\geq N_{a,v}$) such that

\vskip -.1cm

\begin{equation*}
(z+x)^n F^{\,\phi}_x (a\z u,v)=(z+x)^n a\zx F^{\,\phi}_x(u,v), \qquad
\hbox{ for all } u\in M.
\end{equation*}

\vskip .3cm

\noi Using (b) in the definition of $Vhom$, for all $a\in V$ and $u\in
M$ there exists $k\in \NN$ such that

\vskip -.1cm

\begin{equation*}
(z-x)^k F^{\,\phi}_x(u,a\z v)=(z-x)^k a\z (F^{\,\phi}_x(u,v)) \qquad
\hbox{ for all } v\in N.
\end{equation*}

\vskip .3cm

\noi Therefore, using Definition \ref{bilin}.(3), we have that $F^{\,\phi}_x$
is a vertex bilinear map of type $(M,N;W)$, obtaining a linear map
from $Hom_V(M,Vhom(N,W))$ to $V\hbox{-}\,Bilinear(M,N;W)$.

Conversely, for any $F_z\in V\hbox{-}\,Bilinear(M,N;W)$, it is clear by
Definition \ref{bilin}.(3), that for each $u\in M$, we have
$F_z(u, \cdot)\in Vhom(N,W)$.
 Then we obtain a linear map $\phi^F:M\to Vhom(N,W)$ (compare it with
 $\tau_{_{M,N,W}}$) defined by
$$
[\phi^F(u)]_z(v)=F_z(u,v).
$$

\vskip .2cm

\noi For any $a\in V$ and $u\in M$, using  the associator formula
(\ref{Iassoc}) and the action on $Vhom$ (\ref{vhom-action}), we
get
$$
[\phi^F(a\z u)]_x(v)=F_x(a\z u,v)=a\zx F_x(u,v)-F_x(u, a\zx v-a\xz
v)=[a\z\phi^F(u)]_x(v).
$$

\vskip .2cm

\noi Therefore $\phi^F\in Hom_V(M,Vhom(N,W))$, proving the
isomorphism.

Now, we check that the maps $\tau_{_{M,N,W}}$ are natural in $M$
(see (\ref{natu})): for $\varphi\in Hom_V(M\s '\oV N,W)$, we have
\begin{align*}
    \Big[\big(\tau_{_{M,N,W}}\big[(\, f\otimes
    1_N)^*(\varphi)\big]\big)(u)\Big]_z(v) & =\big[(\, f\otimes
    1_N)^*(\varphi)\big](u\oz v)=\varphi\big((\, f\otimes 1_N)(u\oz
    v)\big)\\
    & =\varphi(f(u)\oz v),
\end{align*}
and
\begin{equation*}
\Big(\big[f^*(\tau_{_{M,N,W}}(\varphi))\big](u)\Big)_z(v)=\Big(\big[\tau_{_{M\s
',N,W}}(\varphi)\big](f(u))\Big)_z(v)=\varphi(f(u)\oz v)
\end{equation*}

\vskip .2cm

\noi proving naturality in $M$. One can check that the other cases
are natural by standard arguments.
\end{proof}

\

In the next part of the work, we follow in part \cite{DLM}.

\

\begin{definition} Let $M$ and $N$ be two $V$-$\,$modules. A {\it
right vertex homomorphism} from $M$ to $N$ is a liner map
$f_z:M\longrightarrow N((z))$ such that

\vskip .3cm

(a) $f_z(d \, u)=\frac{d}{dz}\, f_z(u)$ \ for all $u\in M$.

\vskip .3cm

(b) For any $a\in V$, there exists $k\in\mathbb{N}$ such that
\begin{equation} \label{right-b}
(z+x)^k\ a\zx (f_x(u))=(z+x)^k \ f_x(a\z u)\qquad \textrm{for all
} u\in M.
\end{equation}

\end{definition}

\vskip .3cm

Observe that  in \cite{DLM}  Definition 2.11, they give a similar
notion, but the integer $k$ in (b) also depends on $u\in M$, which
is wrong.

We denote by $Vhom^r(M,N)$ the space of all right vertex
homomorphisms from $M$ to $N$. Define $d$ on $Vhom^r(M,N)$ given
by $(d\, f)_z(u)=\big(d-\frac{d}{dz}\big)(f_z(u))$, for $u\in M$.
Observe that if $V$ is an associative commutative algebra with
unit and $d=0$, and $M$ and $N$ are modules over it with the
corresponding derivations equal to zero, then (using (a) in the
definition) any right vertex homomorphism must be independent of
$z$ and $Vhom^r(M,N)=Hom_V(M,N)$, the usual homomorphisms of
modules over an associative commutative algebra.

One of the motivation for the definition of right vertex
homomorphism is the following: let $g_z:M\times N\to W((z))$ be a
vertex bilinear map. Then for any fixed  $v\in N$, the map
$f_z(u)=g_z(u,v)$ is a right vertex homomorphism from $M$ to $W$.

Now, we define an action of $V$ on $Vhom^r(M,N)$ as follows:
\begin{align*}
    (a\rz f)_x(u)=a\z(f_x(u))-f_x(a\zmx u - a\mxz u)
\end{align*}
which can also be rewritten as
\begin{align*}
(a\rz f)_x(u)=a\z(f_x(u))-\mathrm{Res}_{\,y} \, \,
\de(x+y,z)f_w(a\y u),
\end{align*}
for $a\in V$ and $u\in M$.

Similarly, we can prove (see Proposition \ref{cheta} (e) and (f)):

\

\begin{proposition} Let $a\in V, f\in Vhom^r(M,N)$ and $u\in M$. Then we
have the following properties:

\vskip .3cm

(a)  For any integer $k$ such that $k\geq N_{a,u}$, we have

\begin{equation*}
(z-x)^k \ (a\rz f)_x (u)= (z-x)^k \ a\z(f_x(u)) \ \in N((z,x)).
\end{equation*}

\vskip .2cm

(b) For any integer $k$ such that $k\geq N_{a,u}$, an alternative
definition for the action of $V$ in $Vhom^r(M,N)$ is given by
\begin{equation*}
\ (a\rz f)_x(u)= (-x+z)^{-k} \big((z-x)^k \ a\z(f_x(u)) \big).
\end{equation*}
similar to the ring theoretical case.
\end{proposition}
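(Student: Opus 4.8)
The statement is the exact right-handed analogue of \prref{cheta}(e)--(f), which itself mirrors \prref{p1}(c)--(d), so the plan is to transcribe that argument. The one genuinely new ingredient is a cancellation identity for the correction term $a\zmx u - a\mxz u$ appearing in the definition of the action of $V$ on $Vhom^r(M,N)$; once it is available, parts (a) and (b) are formal manipulations of Laurent series of the kind already carried out in those two earlier propositions.

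First I would record the following: for all $a\in V$, $u\in M$ and every integer $l\geq N_{a,u}$,
\begin{equation*}
(z-x)^l\,\big(a\zmx u - a\mxz u\big)=0 .
\end{equation*}
Indeed, $a\zmx u=\sum_n (a_n u)(z-x)^{-n-1}$ and $a\mxz u=\sum_n (a_n u)(-x+z)^{-n-1}$, the only difference being the expansion convention (nonnegative powers of $x$, resp.\ of $z$). Multiplying by $(z-x)^l$ turns each $(z-x)^{-n-1}$ into $(z-x)^{l-n-1}$; by the definition of $N_{a,u}$ (see \reref{module}(d)) one has $a_n u=0$ for $n\geq N_{a,u}$, so only terms with $n<N_{a,u}\leq l$ survive, and then $l-n-1\geq 0$, so $(z-x)^{l-n-1}$ is a genuine polynomial, whose two expansions coincide.

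Part (a) then follows by multiplying the defining formula $(a\rz f)_x(u)=a\z(f_x(u))-f_x\big(a\zmx u - a\mxz u\big)$ by $(z-x)^l$ with $l\geq N_{a,u}$: since $f_x$ is $\kk$-linear and the factor $(z-x)^l$ involves only finitely many powers of $x$, it passes inside $f_x$, and the correction term becomes $f_x(0)=0$; hence $(z-x)^l(a\rz f)_x(u)=(z-x)^l\,a\z(f_x(u))$. That this common expression lies in $N((z,x))$ is seen exactly as in the proof of \prref{p1}(c): computed from the right-hand side (apply $f_x$ first, landing in $N((x))$, then $a\Nz$) it lies in $N((x))((z))$, while from the left-hand side it lies in $N((z))((x))$, and an element lying in both rings is by definition an element of $N((z,x))$. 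In particular, $(z-x)^l$ being a polynomial, $(a\rz f)_x(u)$ itself involves only finitely many negative powers of $x$.

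For part (b), I would argue as in \prref{p1}(d). Since, by (a), $(z-x)^l(a\rz f)_x(u)$ involves only finitely many negative powers of $x$, multiplication by $(-x+z)^{-l}$ is legitimate, and applying it to both sides of the identity of (a) gives
\begin{equation*}
(a\rz f)_x(u)=(-x+z)^{-l}\big((z-x)^l\,a\z(f_x(u))\big).
\end{equation*}
Independence of the chosen $l\geq N_{a,u}$ follows verbatim from the computation in \prref{p1}(d): for $l_1\geq l_2\geq N_{a,u}$ one has $(z-x)^{l_2}a\z(f_x(u))\in N((z,x))$ by (a), so that $(-x+z)^{-l_1}\big((z-x)^{l_1}a\z(f_x(u))\big)$ can be rewritten, via $(-x+z)^{-l_1}(z-x)^{\,l_1-l_2}=(-x+z)^{-l_2}$, as $(-x+z)^{-l_2}\big((z-x)^{l_2}a\z(f_x(u))\big)$. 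The only point requiring care anywhere in the argument is, as usual, keeping the formal-variable expansion conventions consistent (nonnegative powers of $x$ versus of $z$) so that all the products are well defined and the finiteness bookkeeping holds, precisely the routine spelled out in \prref{p1}(c)--(d) and \prref{cheta}(e)--(f); there is no substantive obstacle.
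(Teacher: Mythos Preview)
Your approach is exactly the one the paper intends: it states only ``Similarly, we can prove (see Proposition \ref{cheta} (e) and (f))'', and those in turn defer to Proposition \ref{p1}(c)--(d). Your cancellation identity $(z-x)^l(a\zmx u-a\mxz u)=0$ for $l\geq N_{a,u}$ is precisely the mechanism behind ``multiplying by $(z-x)^{N_{a,u}}$'' in the proof of Proposition \ref{p1}(c), and your treatment of (b) reproduces Proposition \ref{p1}(d) verbatim.

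One small correction to your bookkeeping for the ``$\in N((z,x))$'' claim: you have the two spaces exchanged. Applying $a\Nz$ coefficient-wise to $f_x(u)\in N((x))$ yields an element of $N((z))((x))$, not $N((x))((z))$ (compare the proof of Proposition \ref{p1}(c), where the paper places $a\z(u\ox v)$ in $F_1((z))((x))$). Symmetrically, it is the \emph{left}-hand side that lies in $N((x))((z))$, but to see this you need the analogue of Proposition \ref{cheta}(a), namely that $a\rz f\in Vhom^r(M,N)((z))$; this is part of the ``similarly'' and is proved exactly as in Proposition \ref{cheta}(a), using property (b) in the definition of $Vhom^r$. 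With that in hand, your two-sided argument goes through and the rest is correct.
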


\

Now, we define $T$ as the operator in $Hom_{\kk}(M,N((z)))$ given
by

\vskip -.1cm

\begin{equation*}
\big[T(f)\big]_z(u)=e^{zd} \, f_{-z}(u),
\end{equation*}

\vskip .2cm

\noi for any $f\in Hom_{\kk}(M,N((z)))$ and $u\in M$. Then we have the
following result (cf. Proposition 2.12 in \cite{DLM}).

\

\begin{proposition}
 Let $f\in
Hom_{\kk}(M,N((z)))$. Then $f\in Vhom (M,N)$ if and only if
$T(f)\in Vhom^r(M,N)$.
\end{proposition}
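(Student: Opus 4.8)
The plan is to verify directly that the assignment $f \mapsto T(f)$ carries each of the two defining conditions of $Vhom(M,N)$ to the corresponding condition of $Vhom^r(M,N)$, and that every step is an equivalence; this gives the ``if and only if'' at once. As a sanity check I would first note that $T$ is an involution, $[T(T(f))]_z(u) = e^{zd}[T(f)]_{-z}(u) = e^{zd}e^{-zd}f_z(u) = f_z(u)$, and that $T(f)$ is again a map $M \to N((z))$, since $e^{zd}$ adds only non-negative powers of $z$ and so preserves $N((z))$.

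I would treat condition (a) first. Assume $f_z(du) = \big(d - \frac{d}{dz}\big)f_z(u)$. Replacing the formal variable $z$ by $-z$ and using that $\frac{d}{dz}f_{-z}(u)$ equals $-\frac{d}{dw}f_w(u)$ evaluated at $w=-z$, the identity becomes $f_{-z}(du) = d\,f_{-z}(u) + \frac{d}{dz}f_{-z}(u)$. Applying $e^{zd}$ and using $\frac{d}{dz}e^{zd} = e^{zd}d$ gives
\[
[T(f)]_z(du) = e^{zd}\big(d\,f_{-z}(u)\big) + e^{zd}\big(\tfrac{d}{dz}f_{-z}(u)\big) = \tfrac{d}{dz}\big(e^{zd}f_{-z}(u)\big) = \tfrac{d}{dz}[T(f)]_z(u),
\]
which is condition (a) of $Vhom^r$; the computation is reversible, so the two conditions (a) are equivalent under $T$.

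For condition (b) the main tool is the conjugation formula $e^{xd}\,(a\Nz)\,e^{-xd} = a\zx$ for the $V$-action on $N$ (and the analogous one on $M$), which follows from the module axioms $d(a\Nz v) = (da)\Nz v + a\Nz(dv)$ and $(da)\Nz v = \frac{d}{dz}(a\Nz v)$ by expanding $e^{x\,\mathrm{ad}\,d}(a\Nz)$. Assume condition (b) of $Vhom$ for $f$: given $a\in V$ there is $k\in\NN$ with $(z-x)^k\,a\Nz(f_x(u)) = (z-x)^k\,f_x(a\Mz u)$ for all $u\in M$. Replacing $x$ by $-x$ turns this into $(z+x)^k\,a\Nz(f_{-x}(u)) = (z+x)^k\,f_{-x}(a\Mz u)$. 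I would then apply $e^{xd}$ (acting on $N$) to both sides: since $e^{xd}$ is $\kk((x))((z))$-linear it commutes with multiplication by $(z+x)^k$, so the right-hand side becomes $(z+x)^k[T(f)]_x(a\Mz u)$, while by the conjugation formula $e^{xd}\big(a\Nz(f_{-x}(u))\big) = a\zx\big(e^{xd}f_{-x}(u)\big) = a\zx\big([T(f)]_x(u)\big)$, so the left-hand side becomes $(z+x)^k\,a\zx\big([T(f)]_x(u)\big)$. This is exactly condition (b) of $Vhom^r$ for $T(f)$, with the same $k$; applying $e^{-xd}$ and substituting $x \mapsto -x$ runs the argument in reverse. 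Hence $f\in Vhom(M,N)$ iff $T(f)\in Vhom^r(M,N)$.

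The argument is essentially Proposition 2.12 of \cite{DLM} transcribed to the present definitions, and is computationally routine. The points that need a little care — and which I regard as the only real obstacle — are the formal bookkeeping ones: that $e^{xd}$ may legitimately be applied to the series occurring here (each $f_{\pm x}(u)\in N((x))$, a space preserved by $e^{xd}$) and commutes with multiplication by $(z\pm x)^k$, and that the signs in the substitution $z\mapsto -z$ and in the conjugation formula are arranged so that the weight factor $(z-x)^k$ of $Vhom$ is matched exactly with the factor $(z+x)^k$ of $Vhom^r$.
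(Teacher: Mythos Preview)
Your proof is correct and follows essentially the same approach as the paper: both verify condition (a) by a direct computation with $e^{zd}$ and $\tfrac{d}{dz}$, and handle condition (b) via the conjugation identity $e^{xd}\,a\Nz\,e^{-xd}=a\zx$ together with the substitution $x\mapsto -x$, noting that each step is reversible. The only cosmetic difference is that the paper runs the (b)-computation starting from the $Vhom^r$ side while you start from the $Vhom$ side, and you add the (harmless) observation that $T$ is an involution preserving $N((z))$.
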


\begin{proof}
For $u\in M$, by definition we have
\begin{align*}
     \frac{d}{dz}[T(f)]_z(u)&=d\, e^{zd} \, f_{-z}(u) + e^{zd}\,
     \frac{d}{dz}\big(f_{-z}(u)\big)\\
     &=e^{zd}\Big(d \, f_{-z}(u) -f_{-z}(du) +\,
     \frac{d}{dz}\big(f_{-z}(u)\big)\Big)+[T(f)]_z(du).
\end{align*}
Then $\frac{d}{dz}[T(f)]_z(u)=[T(f)]_z(du)$ if and only if
$\big(d-\frac{d}{dz}\big)f_z(u)=f_z(du)$.

\

For any $a\in V$, suppose that there exists $k\in \NN$ such that
\begin{equation*}
(z+x)^k\ a\zx ([T(f)]_x(u))=(z+x)^k \ [T(f)]_x(a\z u)\qquad
\textrm{for all } u\in M.
\end{equation*}
Then
\begin{equation*}
(z+x)^k\ e^{-xd}\  a\zx ([T(f)]_x(u))=(z+x)^k \ e^{-xd}\
[T(f)]_x(a\z u).
\end{equation*}
Thus
\begin{equation*}
(z+x)^k\  a\z (e^{-xd}\ [T(f)]_x(u))=(z+x)^k \ e^{-xd}\
[T(f)]_x(a\z u).
\end{equation*}
That is,
\begin{equation*}
(z+x)^k\  a\z (f_{-x}(u))=(z+x)^k \ f_{-x}(a\z u).
\end{equation*}

\vskip .2cm

\noi Since every step in the proof can be reversed, $f$ satisfies
(\ref{left-b}) if and only if $T(f)$ satisfies (\ref{right-b}).
\end{proof}

\

\begin{proposition}
{\rm (cf. Proposition 2.13 in \cite{DLM})} For any $a\in V$ and $f\in
Vhom(M,N)$, we have
\begin{equation*}
T(a\z f)=a\rz [T(f)].
\end{equation*}
\end{proposition}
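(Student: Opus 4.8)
The plan is to evaluate both sides on an arbitrary $u\in M$, unwind the two $V$-actions together with the definition of $T$, and reduce the whole identity to a single conjugation formula for the module $N$. Substituting $y=-x$ in the defining formula (\ref{vhom-action}) of the action on $Vhom(M,N)$ and applying $e^{xd}$ (using $[T(g)]_x(v)=e^{xd}g_{-x}(v)$) turns the left-hand side into
\begin{equation*}
[T(a\z f)]_x(u)=e^{xd}\Big(a\zmx\big(f_{-x}(u)\big)\Big)-e^{xd}\,f_{-x}\big(a\zmx u-a\mxz u\big),
\end{equation*}
where the first $a\zmx$ acts on $N$ and the occurrences inside $f_{-x}(\cdot)$ act on $M$. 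On the other hand, unwinding the action on $Vhom^r(M,N)$ and the definition of $T$ turns the right-hand side into
\begin{equation*}
(a\rz [T(f)])_x(u)=a\z\big(e^{xd}f_{-x}(u)\big)-e^{xd}\,f_{-x}\big(a\zmx u-a\mxz u\big).
\end{equation*}
The ``correction'' terms are thus manifestly equal, so the proposition reduces to the identity
\begin{equation*}
e^{xd}\Big(a\zmx(w)\Big)=a\z\big(e^{xd}w\big)\qquad\text{for all }a\in V,\ w\in N((x)),
\end{equation*}
with $a\zmx$ and $a\z$ acting on $N$.

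This last identity is exactly the conjugation-by-$e^{xd}$ formula for the module $N$, of the same type already used in this paper (for instance in the proof that $[\Phi(u)]_z\in Vhom(V,M)$), and I would establish it by the usual argument. Regard $\Phi(x):=e^{xd}\circ(a\zmx)\circ e^{-xd}$ as an operator on $N$, extended $x$-linearly to $N((x))$. The translation-derivation axioms (\ref{MD}) for $N$ give $[d,a\z]=\frac{d}{dz}(a\z)$, and differentiating the substituted series yields $\frac{d}{dx}(a\zmx)=-[d,a\zmx]$; hence
\begin{equation*}
\frac{d}{dx}\Phi(x)=[d,\Phi(x)]+e^{xd}\Big(\tfrac{d}{dx}(a\zmx)\Big)e^{-xd}=[d,\Phi(x)]-[d,\Phi(x)]=0 .
\end{equation*}
Since $\Phi(0)=a\z$, we get $\Phi(x)=a\z$ for all $x$, which is the claimed identity; specializing $w=f_{-x}(u)$ and plugging back into the two displays above makes them coincide, finishing the proof.

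The only real obstacle is the bookkeeping of the formal-variable conventions: one must check that the substitution $y=-x$, the insertion of the infinite (scalar) power series $e^{xd}$, and the passage between the various iterated Laurent-series spaces ($N((y))((z))$, $N((x))((z))$, etc.) are all legitimate, so that the termwise comparison above is valid; once the conjugation lemma is available, no further computation is needed. Equivalently, the statement can be read as saying that $T$ intertwines the two $V$-actions on $Vhom(M,N)$ and $Vhom^r(M,N)$.
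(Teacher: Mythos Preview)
Your proof is correct and follows essentially the same route as the paper: expand both sides via the defining actions on $Vhom$ and $Vhom^r$, observe that the correction terms coincide, and reduce to the conjugation identity $e^{xd}\big(a\zmx w\big)=a\z\big(e^{xd}w\big)$ on $N$. The only difference is that the paper invokes this conjugation formula in one line without further comment, whereas you supply an explicit derivation of it via the ODE argument.
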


\begin{proof}
By definition we have

\begin{align*}
    [T(a\z f)]_x(u) &= e^{xd}\big( a\zmx(f_{-x}(u))-f_{-x}(a\zmx u
    - a\mxz u)\big)\\
&= a\z(e^{xd}f_{-x}(u))-e^{xd}f_{-x}(a\zmx u
    - a\mxz u)\\
&= a\z([T(f)]_{x}(u))-[T(f)]_{x}(a\zmx u
    - a\mxz u)\\
&= (a\rz [T(f)])_x(u).
\end{align*}
\end{proof}

\noi Then we obtain

\begin{proposition}
$Vhom^r(M,N)$ is a $V$-$\,$module and the linear map $T$ is a
$V$-$\,$isomorphism.
\end{proposition}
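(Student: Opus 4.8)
The plan is to obtain the statement as a formal consequence of the two immediately preceding propositions, once we observe that $T$ is an involution. First I would record that $T$ is a $\kk$-linear endomorphism of $Hom_{\kk}(M,N((z)))$ with $T^2=\mathrm{id}$: from $[T(f)]_z(u)=e^{zd}f_{-z}(u)$ one computes $[T^2(f)]_z(u)=e^{zd}\,[T(f)]_{-z}(u)=e^{zd}e^{-zd}f_z(u)=f_z(u)$, so $T$ is a linear bijection equal to its own inverse. Next I would upgrade the proposition ``$f\in Vhom(M,N)$ iff $T(f)\in Vhom^r(M,N)$'' to the assertion that $T$ restricts to mutually inverse $\kk$-linear bijections between $Vhom(M,N)$ and $Vhom^r(M,N)$: for $f\in Vhom(M,N)$ the proposition gives $T(f)\in Vhom^r(M,N)$; conversely, given $h\in Vhom^r(M,N)$, apply the proposition to $T(h)\in Hom_{\kk}(M,N((z)))$ and note $T(T(h))=h\in Vhom^r(M,N)$, which forces $T(h)\in Vhom(M,N)$.

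Then I would check that this bijection carries the remaining structure. Compatibility with $d$ is a one-line computation: putting $g(w):=f_w(u)$ and using the definitions $(d\,f)_z=\frac{d}{dz}(f_z)$ on $Vhom$ and $(d\,f)_z(u)=\big(d-\frac{d}{dz}\big)(f_z(u))$ on $Vhom^r$, one finds $[d\,T(f)]_z(u)=\big(d-\frac{d}{dz}\big)\big(e^{zd}g(-z)\big)=e^{zd}g'(-z)=[T(d\,f)]_z(u)$, so $T(d\,f)=d\,T(f)$. Compatibility with the $V$-action is exactly the proposition $T(a\z f)=a\rz[T(f)]$ read coefficientwise in $z$: writing $a\z f=\sum_n (a_n f)\,z^{-n-1}$ with $a_n f\in Vhom(M,N)$ and $a_n f=0$ for $n\gg0$, linearity of $T$ gives $a\rz[T(f)]=\sum_n T(a_n f)\,z^{-n-1}$ with $T(a_n f)\in Vhom^r(M,N)$ and $T(a_n f)=0$ for $n\gg0$, so in particular $a\rz[T(f)]\in Vhom^r(M,N)((z))$.

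Finally, since $Vhom(M,N)$ is already known to be a $V$-module and $T$ is a $\kk$-linear bijection onto $Vhom^r(M,N)$ intertwining $d$ with $d$ and $f\mapsto a\z f$ with $f\mapsto a\rz f$ for every $a\in V$ (each such operation commuting with $T$ because $T$ acts coefficientwise in the module variables), all the $V$-module axioms (unit $\vac\z f=f$, translation--derivation, associativity) transport verbatim from $Vhom(M,N)$ to $Vhom^r(M,N)$; hence $Vhom^r(M,N)$ is a $V$-module and $T$ is a $V$-module isomorphism. I do not expect a genuine obstacle here: the only care needed is bookkeeping — applying the single-variable operator $T$ coefficientwise when it is fed the two-variable element $a\z f$, and matching the two genuinely different definitions of $d$ on $Vhom$ and $Vhom^r$ against the shift $f_z\mapsto e^{zd}f_{-z}$ performed by $T$.
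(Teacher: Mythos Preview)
Your proposal is correct and matches the paper's approach exactly: the paper writes only ``Then we obtain'' before stating the proposition, indicating it is an immediate formal consequence of the two preceding propositions (that $T$ carries $Vhom$ to $Vhom^r$ and back, and that $T(a\z f)=a\rz[T(f)]$), and you have simply supplied the straightforward details the paper omits---the involutivity $T^2=\mathrm{id}$, the one-line check that $T$ intertwines the two definitions of $d$, and the transport of the $V$-module axioms along the bijection.
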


\

Let $M,M\s '$ and $N$ be $V$-$\,$modules. Given $f\in Hom_V(M\s ',M)$,
we can define an induced map
\begin{equation*}
f_*: Vhom^r(N,M\s ')\to Vhom^r(N,M)
\end{equation*}
given by $[f_*(\psi)]_z(v)=f(\psi_z(v))$ for all $v\in N$.
Similarly, we define
\begin{equation*}
f^*: Vhom^r(M,N)\to Vhom^r(M\s ',N)
\end{equation*}
given by $[f^*(\phi)]_z(u')=\phi_z(f(u'))$ for all $u'\in M\s '$.
A simple computation shows that $f_*(\psi)\in Vhom^r(N,M)$ and
$f^*(\phi)\in Vhom^r(M\s ', N)$.

Observe that we use the same notation of $f_*$ and $f^*$ for
$Vhom^r$, $Vhom$ and $Hom_V$. The proof of the following result is
similar to the proof of Proposition \ref{54}.

\begin{proposition}
(a) The maps $f_*$ and $f^*$ are $V$-$\,$homomorphisms.

\vskip .2cm

\noi (b) $Vhom^r(N,\square)$ is a covariant functor from
\textsf{Mod}$_V$ to \textsf{Mod}$_V$.

\vskip .2cm

\noi (c) $Vhom^r(\square,N)$ is a contravariant functor from
\textsf{Mod}$_V$ to \textsf{Mod}$_V$.
\end{proposition}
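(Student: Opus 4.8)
The plan is to follow the proof of Proposition \ref{54} essentially verbatim, replacing $Vhom$ by $Vhom^r$ and using the action of $V$ on $Vhom^r(M,N)$ together with its module derivation $(d\,f)_z(u)=\big(d-\frac{d}{dz}\big)(f_z(u))$. Since the excerpt has already checked that $f_*(\psi)\in Vhom^r(N,M)$ and $f^*(\phi)\in Vhom^r(M\s',N)$, only the module-homomorphism property in (a) and the functor axioms in (b), (c) remain.

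For (a) I would first note that $f_*$ is additive by construction, and that it intertwines the derivations: because $f\in Hom_V(M\s',M)$ commutes with $d$ and (trivially) with $\frac{d}{dz}$,
\begin{equation*}
[f_*(d\psi)]_z(v)=f\Big(\big(d-\tfrac{d}{dz}\big)(\psi_z(v))\Big)=\big(d-\tfrac{d}{dz}\big)\big(f(\psi_z(v))\big)=[d(f_*(\psi))]_z(v).
\end{equation*}
For $V$-linearity, expanding the action formula on $Vhom^r$, for $a\in V$ and $v\in N$,
\begin{align*}
[f_*(a\rz\psi)]_x(v)&=f\big((a\rz\psi)_x(v)\big)=f\big(a\z(\psi_x(v))-\psi_x(a\zmx v-a\mxz v)\big)\\
&=a\z\big(f(\psi_x(v))\big)-f\big(\psi_x(a\zmx v-a\mxz v)\big)=\big(a\rz[f_*(\psi)]\big)_x(v),
\end{align*}
where the third equality uses $f(a\z w)=a\z f(w)$, applied coefficientwise in $x$. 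The argument for $f^*$ is identical, now pulling $f$ inside: $[f^*(d\phi)]_z(u')=(d\phi)_z(f(u'))=\big(d-\frac{d}{dz}\big)(\phi_z(f(u')))=[d(f^*(\phi))]_z(u')$, and $[f^*(a\rz\phi)]_x(u')=(a\rz\phi)_x(f(u'))$ expands, by the $V$-linearity of $f$, into $a\z([f^*(\phi)]_x(u'))-[f^*(\phi)]_x(a\zmx u'-a\mxz u')=\big(a\rz[f^*(\phi)]\big)_x(u')$.

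For (b) and (c) it remains to verify the functor axioms. The induced maps fix identities --- $(1_M)_*$ and $(1_N)^*$ act as the identity on the relevant $Vhom^r$ spaces because they fix every $f_z$ pointwise --- and they respect composition: given $M\s'\xrightarrow{f}M\xrightarrow{g}M\s''$ in \textsf{Mod}$_V$, for $\psi\in Vhom^r(N,M\s')$ we get
\begin{equation*}
[(gf)_*(\psi)]_z(v)=(gf)(\psi_z(v))=g\big([f_*(\psi)]_z(v)\big)=[g_*(f_*(\psi))]_z(v),
\end{equation*}
so $(gf)_*=g_*f_*$ and $Vhom^r(N,\square)$ is covariant; dually $[(gf)^*(\phi)]_z(u')=\phi_z(f(g(u')))=[f^*(g^*(\phi))]_z(u')$, so $(gf)^*=f^*g^*$ and $Vhom^r(\square,N)$ is contravariant.

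There is no real obstacle here; the only point to keep in mind --- exactly as in Proposition \ref{54} --- is that each identity is to be read coefficientwise in the formal variables, the operators $f_*,f^*$ being applied termwise to the series defining the $V$-action on $Vhom^r$. Since $f$ is $\kk$-linear, commutes with $d$, and does not touch the formal variable, everything reduces to the two identities $f(a\z w)=a\z f(w)$ and $f(dw)=d\,f(w)$ valid for any $V$-homomorphism $f$.
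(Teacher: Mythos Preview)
Your proposal is correct and matches the paper's approach exactly: the paper itself gives no separate proof, stating only that it is similar to Proposition~\ref{54}, and you carry out precisely that translation to the $Vhom^r$ setting. One small slip: in the contravariance check you wrote $\phi_z(f(g(u')))$, but with $M'\xrightarrow{f}M\xrightarrow{g}M''$ the composite is $gf$, so the middle expression should read $\phi_z(g(f(u')))$; your conclusion $(gf)^*=f^*g^*$ is nonetheless correct.
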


Now, we shall present some properties of $Vhom^r$.

\begin{proposition}
Let $M$ be a $V$-$\,$module. Then $Vhom^r(V,M)$ is isomorphic to $M$
as a $V$-$\,$module. The isomorphism is given by
$\Psi_M:Vhom^r(V,M)\to M$ with $\Psi_M(f)=\hbox{Res}_{\,z}\ z^{-1} \,
f_z(\vac)$. Moreover, if $g\in Hom_V(M,N)$, then the following
diagram commutes:

$$\begin{CD}
 Vhom^r(V,M) @>\Psi_M>> M \\
 @Vg_*VV  @VVgV\\
 Vhom^r(V,N) @>>\Psi_N> N
\end{CD}$$

\noi proving that $\Psi$ is a natural isomorphism from
$Vhom^r(V,\square)$ to the identity functor on \textsf{Mod}$_V$.
\end{proposition}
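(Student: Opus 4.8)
The plan is to mirror the proof of the corresponding statement for $Vhom$, exhibiting an explicit inverse $\Phi\colon M\to Vhom^r(V,M)$ of $\Psi_M$. I would set
$$[\Phi(u)]_z(a)=a\z u\qquad (u\in M,\ a\in V),$$
which makes sense because the module map sends $V\otimes M$ into $M((z))$. To see that $\Phi(u)\in Vhom^r(V,M)$, note that axiom (a) of a right vertex homomorphism is exactly the first identity in the module translation--derivation axiom (\ref{MD}), $(da)\z u=\frac{d}{dz}(a\z u)$, while condition (\ref{right-b}) is exactly module associativity (\ref{456}); the point here is that, by Remark \ref{module}(c)--(d), the integer $l$ in (\ref{456}) may be taken to depend only on $a$ and $u$, hence---$u$ being fixed---only on $a\in V$, which is what the definition of $Vhom^r$ requires. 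Next I would check that $\Phi$ is a $V$-homomorphism: $\Phi(du)=d\,\Phi(u)$ follows from (\ref{MD}) together with the definition $(d\,f)_z(u)=(d-\frac{d}{dz})(f_z(u))$ on $Vhom^r$, and $\Phi(a\z w)=a\rz\Phi(w)$ is precisely the commutator formula for modules (\ref{commutator-m}), rewritten in the form $c\x(a\z w)=a\z(c\x w)-(a\zmx c-a\mxz c)\x w$, read against the defining formula of the $V$-action on $Vhom^r$ evaluated at $\Phi(w)$.

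It then remains to show that $\Psi_M$ and $\Phi$ are mutually inverse. One composite is immediate from the unit axiom: $\Psi_M(\Phi(u))=\mathrm{Res}_z\,z^{-1}(\vac\z u)=u$. For the other, fix $f\in Vhom^r(V,M)$ and put $u:=\Psi_M(f)$. Applying axiom (a) with $a=\vac$ and using $d\vac=0$ gives $\frac{d}{dz}f_z(\vac)=f_z(d\vac)=0$, so $f_z(\vac)=u$ is independent of $z$. Now fix $a\in V$, choose $k$ as in (\ref{right-b}), and enlarge $k$ so that in addition $k\ge N_{a,u}$; specializing (\ref{right-b}) at $\vac$ and using $a\z\vac=e^{zd}a$ yields
$$(z+x)^k\,a\zx u=(z+x)^k\,f_x(e^{zd}a).$$
Since $k\ge N_{a,u}$, the left-hand side equals $\sum_{n<N_{a,u}}(a_n u)(z+x)^{k-n-1}$ and so lies in $M[[z,x]]$, while the right-hand side lies in $M((x))[[z]]$; in both we may set $z=0$, obtaining $x^k(a\x u)=x^k f_x(a)$, hence $f_x(a)=a\x u=[\Phi(u)]_x(a)$. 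Thus $f=\Phi(\Psi_M(f))$, so $\Psi_M$ is a bijection with inverse $\Phi$; being the inverse of a $V$-homomorphism, it is a $V$-module isomorphism.

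Finally, for the naturality square I would compute, for $g\in Hom_V(M,N)$ and $\psi\in Vhom^r(V,M)$,
$$\Psi_N(g_*(\psi))=\mathrm{Res}_z\,z^{-1}\,g(\psi_z(\vac))=g\big(\mathrm{Res}_z\,z^{-1}\psi_z(\vac)\big)=g(\Psi_M(\psi)),$$
using that $g$ is linear and commutes with taking residues; this is exactly the commutativity of the stated diagram, so $\Psi$ is a natural isomorphism from $Vhom^r(V,\square)$ to the identity functor on \textsf{Mod}$_V$. There is no essential obstacle, since the whole argument is the right-handed analogue of the already established $Vhom$ case; the one step that needs care is the formal-variable bookkeeping in the specialization $z\mapsto 0$ above---one must enlarge $k$ past $N_{a,u}$ precisely so that the two sides of the displayed identity land in $M[[z,x]]$ and $M((x))[[z]]$ respectively, where the substitution is legitimate, exactly the device already used for $Vhom$ and in Proposition \ref{p1}.
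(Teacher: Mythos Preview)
Your proposal is correct and follows essentially the same route as the paper: define $\Phi(u)$ by $[\Phi(u)]_z(a)=a\z u$, verify $\Phi(u)\in Vhom^r(V,M)$ via module translation and associativity, check $\Phi$ is a $V$-homomorphism using the commutator formula, and then show $\Phi$ and $\Psi_M$ are mutually inverse by specializing the defining identity (\ref{right-b}) at $\vac$. The only cosmetic difference is in that last step: the paper rewrites $f_z(e^{xd}a)$ as $e^{x\,d/dz}f_z(a)=f_{x+z}(a)$ before setting $x=0$, whereas you set $z=0$ directly in $f_x(e^{zd}a)$ after enlarging $k$ past $N_{a,u}$ so that both sides land in spaces where the substitution is legitimate; your bookkeeping here is in fact slightly more explicit than the paper's, and you also record the easy identity $\Psi_M\Phi=\mathrm{id}$ which the paper leaves implicit.
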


\begin{proof}
We define a map
\begin{equation*}
\Phi:M\longrightarrow Vhom(V,M)
\end{equation*}
given by $[\Phi(u)]_z(a)=a\z u$, for $u\in M$. We should check
that $[\Phi(u)]_z\in Vhom^r(V,M)$. By definition, we have
\begin{align*}
    [\Phi(u)]_z(da)&=(da)\z u=\frac{d}{dz}(a\z
    u) =\frac{d}{dz}[\Phi(u)]_z(a).
\end{align*}
Using associativity, for any $b\in V$, there is $k\in \mathbb{N}$
depending only on $b$ and $u$ (see Remark \ref{module}) such that
\begin{equation*}
(z+x)^k b\zx (a\x u)=(z+x)^k (b\z a)\x u
\end{equation*}
for all $a\in V$. Thus,
\begin{align*}
    (z+x)^k b\zx \big([\Phi(u)]_{x}(a)\big)=(z+x)^k
    \big ([\Phi(u)]_{x}(b\z
    a)\big)
\end{align*}
for all $a\in V$, proving that $[\Phi(u)]_z\in Vhom^r(V,M)$. It is
easy to see using the commutator formula for modules
(\ref{commutator-m}) that the linear map $\Phi$ from $M$ to
$Vhom^r(V,M)$ is a $V$-$\,$homomorphism.

Conversely, given any $f_z\in Vhom^r(V,M)$, we will prove that
$f=\Phi(\Psi_M(f))$. If we write $f_z=\sum_{n\in \mathbb{Z}} f_n
z^{-n-1}$, then by definition $f_n(da)=n f_{n-1}(a)$. In
particular, $n f_{n-1}(\mathbf{1})=0$ for all $n\in \ZZ$.
Therefore, $u:=f_{-1}(\mathbf{1})=f_z(\vac)\in M$. Now, for any
$a\in V$ there exists $k\in \mathbb{N}$ such that $(z+x)^k f_z(a\x
b)=(z+x)^k a\xz(f_z(b))$ for all $b\in V$. Then
\begin{align*}
    z^k (a\z u)& =  (x+z)^k \, a\xz (f_z(\mathbf{1}))\, _{|_{x=0}}=
     (x+z)^k f_z( a\x \mathbf{1}))\, _{|_{x=0}}= (x+z)^k  f_z(e^{xd}a)
     \, _{|_{x=0}}\\
& = (x+z)^k e^{x\frac{d}{dz}} f_z(a)\, _{|_{x=0}}=(x+z)^k
f_{x+z}(a)=z^k f_z(a).
\end{align*}
Thus, $f=\Phi(\Psi_M(f))$. Therefore $Vhom^r(V,M)$ is isomorphic
to $M$ as a $V$-$\,$module. Finally, we have
\begin{equation*}
\Psi_N(g_*(\psi))=\hbox{Res}_z \, z^{-1} \, [g_*(\psi)]_z(\vac)=
\hbox{Res}_z \, z^{-1} \, g(\psi_z(\vac))=g(\Psi_M(\psi)),
\end{equation*}
finishing the proof.
\end{proof}

\

The following result could be thought as a motivation for the
definition of $Vhom^r$ and the action of $V$ on it. The proof is
similar to the proof of Theorem \ref{TT}

\

\begin{theorem}\label{TT22} {\rm (Adjoint isomorphism, second version)}
Let $M,N$ and $W$ be $V$-$\,$modules. Then there is a natural
isomorphism of $\kk$-vector spaces
\begin{equation*}
\tau\, '_{_{M,N,W}}:Hom_V(N\oV M, W)\longrightarrow
Hom_V(M,Vhom^r(N,W))
\end{equation*}
sending $f:N\oV M\to W$ to
\begin{equation*}
\big[\tau\, '_{_{M,N,W}}(f)\big] (u)\, :\, v\mapsto f(v\oz u)
\end{equation*}

\vskip .2cm

\noi for $u\in M$ and $v\in N$. That is, fixing any two of $M,N,W$,
each $\tau\, '_{_{M,N,W}}$ is a natural isomorphism.
\end{theorem}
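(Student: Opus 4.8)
The plan is to mirror the proof of Theorem \ref{TT}, so I would first recall that by Proposition \ref{BBBB} we have a natural linear isomorphism $Hom_V(N\oV M,W)\simeq V\hbox{-}\,Bilinear(N,M;W)$, and then establish the key linear isomorphism $V\hbox{-}\,Bilinear(N,M;W)\simeq Hom_V(M,Vhom^r(N,W))$. The composition of these two, tracked on elements, will be exactly $\tau\,'_{_{M,N,W}}$.

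For the core isomorphism, given $\phi\in Hom_V(M,Vhom^r(N,W))$ I would define $F_z(v,u):=[\phi(u)]_z(v)$ for $v\in N$, $u\in M$, and check it is a vertex bilinear map of type $(N,M;W)$ using Definition \ref{bilin}.(3). Axiom (a) of vertex bilinear maps follows from the $d$-compatibility: the condition $F_z(dv,u)=\frac{d}{dz}F_z(v,u)$ comes from axiom (a) of $Vhom^r$ (note $Vhom^r$ is the \emph{right} version, so $f_z(dv)=\frac{d}{dz}f_z(v)$, matching the first half of \ref{bilin}(a)), while $F_z(v,du)=(d-\frac{d}{dz})F_z(v,u)$ comes from the definition of $d$ on $Vhom^r(N,W)$, namely $(d\,f)_z(v)=(d-\frac{d}{dz})(f_z(v))$, together with $\phi(du)=d\,\phi(u)$. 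For axiom (b): the identity $(z+x)^n\,a\zx F_x(v,u)=(z+x)^n F_x(a\z v,u)$ (for $n$ depending on $a$ and the first slot) is exactly axiom (b) of $Vhom^r(N,W)$ applied to $[\phi(u)]$; and the identity $(z-x)^k\,a\z F_x(v,u)=(z-x)^k F_x(v,a\z u)$ (for $k$ depending on $a$ and the second slot) follows from $\phi(a\z u)=a\z\phi(u)$ combined with the action of $V$ on $Vhom^r$, which is built precisely so that $[a\z f]_x(v)=a\z(f_x(v))-f_x(a\zmx v - a\mxz v)$; multiplying by the appropriate power of $(z-x)$ kills the correction term (cf. Proposition stating $(z-x)^k(a\rz f)_x(v)=(z-x)^k a\z(f_x(v))$). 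Conversely, for $F_z\in V\hbox{-}\,Bilinear(N,M;W)$, the motivating observation already noted in the text (for fixed $u\in M$, $v\mapsto F_z(v,u)$ is a right vertex homomorphism) gives a map $\phi^F(u):=F_\bullet(\,\cdot\,,u)\in Vhom^r(N,W)$, and one checks $\phi^F\in Hom_V(M,Vhom^r(N,W))$ by verifying $\phi^F(a\z u)=a\z\phi^F(u)$ via the associator/commutator formulas \eqref{Iassoc}, \eqref{Icommut} for vertex bilinear maps and the definition of the $V$-action on $Vhom^r$. These two constructions are mutually inverse.

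Finally I would check naturality: fixing two of $M,N,W$ and varying the third, each $\tau\,'_{_{M,N,W}}$ commutes with the induced maps $f^*$, $f_*$, and $1\otimes f$. The relevant diagrams are checked exactly as in Theorem \ref{TT} by evaluating both composites on an element $\varphi$ and then on a generator $v\oz u$; for instance, for $f\in Hom_V(M,M')$ one computes $\big[\big(\tau\,'_{_{M,N,W}}((1_N\otimes f)^*(\varphi))\big)(u)\big]_z(v)=\varphi(v\oz f(u))=\big(\big[f^*(\tau\,'_{_{M',N,W}}(\varphi))\big](u)\big)_z(v)$, and the remaining two cases are analogous.

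The main obstacle is the bookkeeping in verifying that $\phi^F$ respects the $V$-action, i.e.\ that the "right" action of $V$ on $Vhom^r(N,W)$ — which involves the twisted argument $a\zmx v - a\mxz v$ rather than the $a\zx v - a\xz v$ appearing in the $Vhom$ case — matches up correctly with the vertex bilinear constraint \eqref{AAa} in the \emph{first} slot (the $N$-slot) rather than the second. One must be careful which slot carries the "associativity-type" relation \eqref{AAa} and which carries the "locality-type" relation \eqref{BBb}: here $N$ is the slot tensored on the \emph{left} in $N\oV M$, so it is the left argument of $F_z$, and the roles of the two relations are swapped relative to Theorem \ref{TT}. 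Once this correspondence of slots is pinned down, everything else is a routine transcription of the earlier proof, so the statement follows.
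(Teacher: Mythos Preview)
Your proposal is correct and follows exactly the approach the paper itself takes: the paper does not spell out a proof of Theorem \ref{TT22} but simply states that ``the proof is similar to the proof of Theorem \ref{TT}'', and what you outline is precisely that transcription, with the roles of the two tensor slots swapped and $Vhom^r$ replacing $Vhom$. Your identification of the main bookkeeping issue --- matching the associativity-type relation \eqref{AAa} in the $N$-slot to axiom (b) of $Vhom^r$, and the locality-type relation \eqref{BBb} in the $M$-slot to the $V$-action on $Vhom^r$ --- is accurate; just be careful that in Definition \ref{bilin}(3) the exponent $n$ in \eqref{AAa} depends on $a$ and the \emph{second} argument while $k$ in \eqref{BBb} depends on $a$ and the \emph{first}, so for type $(N,M;W)$ the dependencies are on $u$ and $v$ respectively (you have these reversed in your sketch, but the substance is unaffected).
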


\

If we consider all the results together, we obtain:

\

\begin{corollary} \label{TTTTT}
For any $V$-$\,$modules $M,N$ and $W$, we have the
following linear isomorphisms
\begin{align*}
\MNW &\simeq V\hbox{-}\,Bilinear(M,N;W)\simeq Hom_V(M\oV N,W)\\
&\simeq Hom_V(M,Vhom(N,W))\simeq Hom_V(N,Vhom^r(M,W)).
\end{align*}
\end{corollary}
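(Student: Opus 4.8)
The plan is simply to concatenate the isomorphisms that have already been established in the previous sections, so the proof is a matter of bookkeeping rather than of new ideas. First I would invoke Theorem \ref{bbb}, which identifies an intertwining operator of type $\MNW$ with a vertex bilinear map of type $(M,N;W)$; since this identification is the identity on the underlying bilinear maps it is in particular $\kk$-linear, giving $\MNW \simeq V\hbox{-}\,Bilinear(M,N;W)$. Next, Proposition \ref{BBBB} supplies the natural linear isomorphism $V\hbox{-}\,Bilinear(M,N;W) \simeq Hom_V(M\oV N,W)$ coming from the universal property of the tensor product (here, as throughout this part of the paper, one assumes the relevant tensor products exist). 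Composing these two maps produces the first line of the claimed chain.

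For the passage to $Vhom$, I would apply Theorem \ref{TT}, which provides the natural isomorphism $\tau_{_{M,N,W}}\colon Hom_V(M\oV N,W)\to Hom_V(M,Vhom(N,W))$, $f\mapsto\big(u\mapsto(v\mapsto f(u\oz v))\big)$. Chaining this with the previous maps yields $\MNW\simeq Hom_V(M,Vhom(N,W))$; tracing through the definitions one checks that the composite sends an intertwining operator $I_z$ to $u\mapsto(v\mapsto I_z(u,v))$, which is the classical description — a reassuring consistency check, though not needed for the statement.

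Finally, for the last isomorphism I would use Theorem \ref{TT22}, but with the roles of $M$ and $N$ interchanged: the theorem as stated gives $\tau\,'_{_{M,N,W}}\colon Hom_V(N\oV M,W)\to Hom_V(M,Vhom^r(N,W))$, so relabeling $M\leftrightarrow N$ produces $\tau\,'_{_{N,M,W}}\colon Hom_V(M\oV N,W)\to Hom_V(N,Vhom^r(M,W))$. Composing this with the isomorphism $V\hbox{-}\,Bilinear(M,N;W)\simeq Hom_V(M\oV N,W)$ from Proposition \ref{BBBB} completes the chain. The only step demanding a moment's care is this index permutation in the last isomorphism; everything else is the straightforward composition of previously proven $\kk$-linear isomorphisms, so I do not expect any real obstacle here.
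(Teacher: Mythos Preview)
Your proposal is correct and follows exactly the approach the paper takes: the corollary is stated immediately after Theorem \ref{TT22} with the one-line justification ``If we consider all the results together, we obtain,'' and your argument simply makes explicit which results are being combined (Theorem \ref{bbb}, Proposition \ref{BBBB}, Theorem \ref{TT}, and Theorem \ref{TT22} with the indices swapped). Your care about the $M\leftrightarrow N$ relabeling in the application of Theorem \ref{TT22} is well placed and handled correctly.
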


\

\section{Properties of the tensor product}\lbb{second}

\

We start this section by proving the commutativity of the tensor
product. Then we prove the associativity of the tensor product
under certain (algebraic and natural) necessary and sufficient
conditions.

The {\it transpose of a vertex bilinear map (or an intertwining
operator)} $F_z$ is defined as
\begin{equation}\label{transpose}
    (F_z)^t(u,v):=e^{zd}F_{-z}(v,u)
\end{equation}
and it is a vertex bilinear map (or an intertwining operator) of
the corresponding type. We have the following result (cf.
Proposition 5.1.6, Proposition 5.1.7 and Theorem 6.2.3 in
\cite{Li1}):

 \

\begin{theorem}
Let $M$ and $N$ be  $V$-$\,$modules.

\vskip .3cm

\noi (a) There is a natural $V$-$\,$isomorphism
\begin{equation*}
\varphi_{_M}:V\oV M\longrightarrow M
\end{equation*}
where $\varphi_{_M}(a\oz u)=a\Mz u$ for $a\in V$ and $u\in M$, and
  $(M,\Mz)$ is a tensor product of $(V,M)$. Symmetrically,
$(M,(\Mz)^t)$ is a tensor product of $(M,V)$.

\vskip .3cm

\noi(b) If $(M\oV N, \, \ozz\,)$ is a tensor product of the pair
$(M,N)$, then $(M\oV N, \, (\ozz)^t\,)$ is a tensor product of the
pair $(N,M)$.

\vskip .3cm

\noi(c) The map
\begin{align*}
\tau:\, &M\oV N\longrightarrow N\oV M\\
&\,\ u\oz v \longmapsto e^{zd}(v\omz u)
\end{align*}

\noi is a natural $V$-$\,$isomorphism in the sense that the following
diagram commutes

$$\begin{CD}
 M\oV N @>\tau>> N\oV M \\
 @V f\,\otimes \,g VV  @VVg\,\otimes \,fV\\
 M\s '\oV N\s ' @>>\tau\s '> N\s ' \oV M\s '
\end{CD}$$

\end{theorem}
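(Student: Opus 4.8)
The plan is to prove (a), (b), (c) in that order, obtaining the symmetric halves of (a) and all of (c) as formal consequences of (b).

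\emph{Part (a), the pair $(V,M)$.} I would first check that the module multiplication $\Mz\colon V\times M\to M((z))$, $(a,u)\mapsto a\Mz u$, is itself a vertex bilinear map of type $(V,M;M)$ in the sense of Definition~\ref{bilin}(3): its axiom (a) is exactly the translation--derivation axiom (\ref{MD}) (the identity $a\Mz(d\,u)=(d-\frac{d}{dz})(a\Mz u)$ being (\ref{MD}) solved for $a\Mz(d\,u)$), while its axiom (b) is module associativity (\ref{456}) and weak commutativity for modules (\ref{locality-m}), each applied with the letter $b$ specialized to an element of $M$. Then I would verify the universal property directly. Given a $V$-module $W$ and a vertex bilinear map $I_z$ of type $(V,M;W)$, the relation $d\vac=0$ forces $\frac{d}{dz}I_z(\vac,u)=0$ by axiom (a) for $I_z$, so $\varphi(u):=\mathrm{Res}_z\,z^{-1}I_z(\vac,u)=I_z(\vac,u)$ is independent of $z$; putting the first argument equal to $\vac$ in the $(z-x)^k$-relation of $I_z$ and comparing $x^0$-coefficients gives $\varphi(a\Mz u)=a\z\varphi(u)$, and together with $\varphi(du)=d\varphi(u)$ this shows $\varphi\in Hom_V(M,W)$. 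Putting the first argument equal to $\vac$ in the $(z+x)^n$-relation, using the unit axiom $a\z\vac=e^{zd}a$ and the identity $I_x(e^{zd}a,u)=I_{x+z}(a,u)$ (Taylor expansion of the translation--derivation of $I_z$), then cancelling $(z+x)^n$ and setting $z=0$, gives $I_z(a,u)=a\z\varphi(u)=\varphi(a\Mz u)$, i.e.\ $I_z=\varphi\circ\Mz$; uniqueness is immediate, since any $V$-homomorphism $\psi$ with $I_z=\psi\circ\Mz$ satisfies $\psi(u)=\psi(\vac\Mz u)=I_z(\vac,u)=\varphi(u)$. Having shown $(M,\Mz)$ is a tensor product of $(V,M)$, the isomorphism $\varphi_{_M}\colon V\oV M\to M$ with $\varphi_{_M}(a\oz u)=a\Mz u$ is the unique $V$-isomorphism furnished by the uniqueness of tensor products. (One could instead derive $V\oV M\cong M$ from Corollary~\ref{TTTTT} and $Vhom^r(V,W)\cong W$ via Yoneda, but that argument does not record the explicit map.)

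\emph{Part (b).} Here I would exploit the transpose $(F_z)^t(u,v)=e^{zd}F_{-z}(v,u)$, which by the remark preceding the theorem takes vertex bilinear maps to vertex bilinear maps, is an involution $((F_z)^t)^t=F_z$, and satisfies $(\psi\circ F_z)^t=\psi\circ(F_z)^t$ for every $V$-homomorphism $\psi$ (as $\psi$ commutes with $e^{zd}$). If $(M\oV N,\ozz)$ is a tensor product of $(M,N)$, then $(\ozz)^t$ is a vertex bilinear map of type $(N,M;M\oV N)$; given any $I_z$ of type $(N,M;W)$, the transpose $(I_z)^t$ is of type $(M,N;W)$, so the universal property of $(M\oV N,\ozz)$ yields a unique $\varphi\in Hom_V(M\oV N,W)$ with $(I_z)^t=\varphi\circ\ozz$, and transposing both sides gives $I_z=\varphi\circ(\ozz)^t$; transposing a competing homomorphism back to the $(M,N)$ situation gives uniqueness. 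This proves (b), and the remaining half of (a), that $(M,(\Mz)^t)$ is a tensor product of $(M,V)$, is the special case $(M\oV N,\ozz)=(M,\Mz)$ of (b).

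\emph{Part (c).} By (b), $(M\oV N,(\ozz)^t)$ is a tensor product of $(N,M)$, and so is $(N\oV M,\ozz)$; hence by the uniqueness of tensor products there is a unique $V$-isomorphism $\tau\colon M\oV N\to N\oV M$ with $v\oz u=\tau(e^{zd}(u\omz v))=e^{zd}\tau(u\omz v)$ for $u\in M$, $v\in N$. Since $\tau$ acts coefficientwise it commutes with the substitution $z\mapsto-z$, and solving for $\tau(u\oz v)$ gives exactly $\tau(u\oz v)=e^{zd}(v\omz u)$. For naturality, given $f\colon M\to M'$ and $g\colon N\to N'$, both $\tau'\circ(f\otimes g)$ and $(g\otimes f)\circ\tau$ are $V$-homomorphisms $M\oV N\to N'\oV M'$ and both carry $u\oz v$ to $e^{zd}(g(v)\omz f(u))$ (using that $f\otimes g$ and $g\otimes f$ commute with $e^{zd}$ and carry $\oz$-products to $\oz$-products of images), so since the coefficients of the series $u\oz v$ span $M\oV N$ the two composites coincide.

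\emph{Main obstacle.} The only genuinely technical step is the formal-variable bookkeeping inside the universal property of part (a): passing from the $(z-x)^k$- and $(z+x)^n$-truncated relations of an arbitrary vertex bilinear map to the honest equalities $\varphi(a\Mz u)=a\z\varphi(u)$ and $I_z=\varphi\circ\Mz$ requires keeping track of which ring of formal series each side inhabits and justifying both the cancellation of $(z\pm x)^k$ and the specialization $z=0$. Once (a) is in place, parts (b) and (c) are purely formal, resting only on the transpose involution and the uniqueness clause of the universal property.
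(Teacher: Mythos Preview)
Your proposal is correct, and your identification of the formal-variable bookkeeping in (a) as the only genuine technical step is accurate. Your route differs from the paper's in two places, both in your favour.

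In part (a), to show that $\varphi(u)=I_z(\vac,u)$ is a $V$-homomorphism and that $I_z=\varphi\circ\Mz$, the paper invokes the commutator formula (\ref{Icommut}) and the iterated formula (\ref{Ittt}) for intertwining operators, i.e.\ consequences of the Jacobi identity. You instead work directly with the two halves (\ref{AAa})--(\ref{BBb}) of the vertex-bilinear definition specialized at $\vac$, which is more in the spirit of the paper's own program of replacing Jacobi-type identities by associativity-plus-locality. One small caution: your phrase ``cancelling $(z+x)^n$ and setting $z=0$'' should really be the other way round---first increase $n$ so that both sides of $(z+x)^n a\zx\varphi(u)=(z+x)^n I_{x+z}(a,u)$ lie in $W[[z,x]]$, then set $z=0$ to obtain $x^n a\x\varphi(u)=x^n I_x(a,u)$ in $W((x))$, and only then cancel $x^n$. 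You flag exactly this issue in your final paragraph, so this is just a matter of wording.

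In part (c), the paper builds $\tau$ and its inverse $\tilde\tau$ separately by exhibiting two vertex bilinear maps and invoking the universal property twice, then checks that the composites are identities. Your argument is cleaner: having proved (b), you know $(M\oV N,(\ozz)^t)$ and $(N\oV M,\ozz)$ are both tensor products of $(N,M)$, so the uniqueness clause hands you the isomorphism $\tau$ directly, and the explicit formula $\tau(u\oz v)=e^{zd}(v\omz u)$ drops out of the defining equation. This saves the separate construction of the inverse. Part (b) and the naturality verification in (c) match the paper's proof essentially verbatim.
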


\

\begin{proof}
(a) Since $\Mz$ and $(\Mz)^t$ satisfy the Jacobi identity, then
they are intertwining operators of the corresponding type. Let $W$
be any $V$-$\,$module, and let $I_z$ be an intertwining operator of
type $\binom{W}{V,M}$. Since
$\frac{d}{dz}I_z(\mathbf{1},u)=I_z(d\,\mathbf{1},u)=0$, then
$I_z(\mathbf{1},u)$ is independent of $z$. Using the commutator
formula (\ref{Icommut}), we obtain that $I_z(\mathbf{1},\, \cdot\,
)$ commutes with any operator $a\x$ for $a\in V$, namely
\begin{equation}\label{7777}
a\Wx I_z(\mathbf{1},u)=I_z(\mathbf{1},a\Mx u).
\end{equation}
Then, $\psi(u):=I_z(\mathbf{1}, u )$ is a $V$-$\,$homomorphism from
$M$ to $W$ and it satisfies the universal property if
$I_z(a,u)=\psi(a\Mz u)=I_z(\mathbf{1}, a\Mz u)$. But, using the
iterated formula (\ref{Ittt}) and (\ref{7777}), we have
\begin{align*}
     I_z(a,u)&= \mathrm{Res}\,_x \, \, x^{-1} \,
     I_z(a\x\mathbf{1},u)\\
     &=\mathrm{Res}\,_x \, \mathrm{Res}\,_y \, x^{-1} \,
     \big[\de(y-z,x) a\Wy I_z(\mathbf{1},u)-\de(-z+y,x) I_z(\mathbf{1}, a\My u)\big]\\
&=\mathrm{Res}\,_y \, [(y-z)^{-1} -(-z+y)^{-1}]\, I_z(\mathbf{1},
a\My u) = \mathrm{Res}\,_y \, \de(y,z)\, I_z(\mathbf{1}, a\My u)
=I_z(\mathbf{1},a\Mz u)
\end{align*}
for $a\in V, u\in M$, proving that $(M,\Mz)$ is a tensor product
of $(V,M)$. Let $F_z:V\times M\to M((z))$ be the $V$-$\,$bilinear map
defined by $F_z(a,u)=a\Mz u$. Then, it induces a $V$-$\,$homomorphism
 $\varphi_{_M}:V\oV M\longrightarrow M$
with $\varphi_{_M}(a\oz u)=a\Mz u$. By the uniqueness of the
tensor product, $\varphi_{_M}$ is a $V$-$\,$isomorphism. Naturality is
proved by showing commutativity of the following diagram for $f\in
Hom_V(M,N)$

$$\begin{CD}
 V\oV M @>\varphi_{_M}>> M \\
 @V 1\otimes f VV  @VVfV\\
 V\oV N @>>\varphi_{_N}> N
\end{CD}$$

\vskip .2cm

\noi It suffices to check the maps on generators:
\begin{equation*}
f\big(\varphi_{_M}(a\oz u)\big)=f(a\z u)=a\z
f(u)=\varphi_{_N}(a\oz f(u))=\varphi_{_N}\big((1\otimes f)(a\oz
u)\big).
\end{equation*}

\noi Similarly, the universal property can be proved for $(M,(\Mz)^t)$, 
or it follows by using (b).

\vskip .2cm

(b) Let $W$ be any $V$-$\,$module and let $I_z\in V\hbox{-}\,Bilinear(N,M;W)$.
It is easy to see that there is a $V$-$\,$homomorphism $\psi$ from
$M\oV N$ to $W$ such that $(I_z)^t=\psi\circ\ozz$ if and only if
$I_z=\psi\circ(\ozz)^t$, finishing (b).

\vskip .2cm

(c) Let $F_z:M\times N\to (N\oV M)((z))$ be given by
$F_z(u,v)=e^{zd}(v\omz u)$ for $u\in M$ and $v\in N$. It easy to
see that $F_z$ is a vertex bilinear map, and so there is a unique
$V$-$\,$homomorphism $\tau:M\oV N\rightarrow N\oV M$ with  $\tau( u\oz
v)= e^{zd}(v\omz u)$. Similarly, interchanging the roles of $M$
and $N$, we obtain a $V$-$\,$homomorphism $\tilde\tau : N\oV
M\rightarrow M\oV N $ with $\tilde\tau( v\oz u )= e^{zd}(u\omz
v)$. Both composite of these maps are obviously identity maps, and
so $\tau$ is a $V$-$\,$isomorphism. The proof of naturality is the
following:
\begin{equation*}
\tau \s '\big((f\otimes g)(u\oz v)\big)=\tau\s '\big( f(u)\oz
g(v)\big)=e^{zd}(g(v)\omz f(u))=(g\otimes f)(\tau(u\oz v)).
\end{equation*}
\end{proof}

Now, we prove the associativity of the tensor product under
certain (algebraic and natural) necessary and sufficient
condition. We shall use the universal property of the tensor
product instead of the explicit construction, following in part
the ideas in \cite{DLM} but using algebraic methods instead of
analytic ones (by the way, \cite{DLM} is full of typos that can be
easily fixed). We found a very natural condition that
simplify Huang's convergence assumptions.

Let $M,N$ and $W$ be three $V$-$\,$modules. We assume for the rest of
this section the existence of  the following  tensor products
$M\oV N, (M\oV N)\oV W, N\oV W$, and $M\oV (N\oV W)$.

The next two propositions immediately follow from the universal
property of tensor product in Definition \ref{tensor2}.

\begin{proposition}\label{3.1}
The tensor products $N\oV W$ and $M\oV(N\oV W)$ (and the
corresponding vertex bilinear maps) satisfy the following
universal property: For any $V$-$\,$modules $Y$ and $Z$, and any
vertex bilinear maps $H\in V\hbox{-}\,Bilinear(N,W;Z)$ and $I\in
V\hbox{-}\,Bilinear(M,Z;Y)$, there exists a unique $V$-$\,$homomorphism
$g:M\oV(N\oV W)\to Y$ such that
\begin{equation*}
g(u\oz(v\oy w))=I_z(u,H_y(v,w))
\end{equation*}
for all $u\in M,v\in N$ and $w\in W$. This universal property
characterizes $M\oV(N\oV W)$ uniquely.
\end{proposition}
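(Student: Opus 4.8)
The plan is to construct $g$ by two successive applications of the universal property of the tensor product: first collapsing $N\oV W$, then collapsing the outer factor $M$. First I would apply the universal property of $N\oV W$ (Definition \ref{tensor2}) to the vertex bilinear map $H\in V\hbox{-}\,Bilinear(N,W;Z)$, obtaining the unique $V$-homomorphism $h\colon N\oV W\to Z$ with $h(v\oy w)=H_y(v,w)$, extended canonically to a map $(N\oV W)((y))\to Z((y))$.

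Next I would check that the $\kk$-bilinear map $F_z\colon M\times(N\oV W)\to Y((z))$ defined by $F_z(u,\eta):=I_z(u,h(\eta))$ is a vertex bilinear map of type $(M,N\oV W;Y)$. Condition (a) of Definition \ref{bilin} follows at once from the corresponding property of $I$ together with the fact that $h$ commutes with $d$: $F_z(du,\eta)=I_z(du,h(\eta))=\frac{d}{dz}F_z(u,\eta)$ and $F_z(u,d\eta)=I_z(u,d\,h(\eta))=\big(d-\frac{d}{dz}\big)F_z(u,\eta)$. For condition (b) it is cleanest to use version (3) of Definition \ref{bilin}: since $I$ is $V$-bilinear of type $(M,Z;Y)$, for each $a\in V$ there is $n\in\NN$ depending on $a$ and $h(\eta)$ (hence on $a$ and $\eta$) with $(z+x)^n\,a\zx I_x(u,h(\eta))=(z+x)^n\,I_x(a\z u,h(\eta))$, which reads $(z+x)^n\,a\zx F_x(u,\eta)=(z+x)^n\,F_x(a\z u,\eta)$; and there is $k\in\NN$ depending on $a$ and $u$ with $(z-x)^k\,a\z I_x(u,h(\eta))=(z-x)^k\,I_x(u,a\z h(\eta))=(z-x)^k\,I_x(u,h(a\z\eta))$, which reads $(z-x)^k\,a\z F_x(u,\eta)=(z-x)^k\,F_x(u,a\z\eta)$; here the last equality uses that $h$ is a $V$-homomorphism.

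Then I would invoke the universal property of $M\oV(N\oV W)$ applied to $F$, obtaining the unique $V$-homomorphism $g\colon M\oV(N\oV W)\to Y$ with $g(u\oz\eta)=F_z(u,\eta)=I_z(u,h(\eta))$; specializing $\eta=v\oy w$ yields $g(u\oz(v\oy w))=I_z(u,H_y(v,w))$, which is the existence assertion. For uniqueness, any $V$-homomorphism satisfying the displayed identity must agree with $g$ on every coefficient of the series $u\oz(v\oy w)$; by the surjectivity lemma (Lemma 5.1.3 of \cite{Li1}) the coefficients of the $v\oy w$ span $N\oV W$, and then, since $\ozz$ is bilinear in its second slot, the coefficients of the $u\oz(v\oy w)$ span $M\oV(N\oV W)$, forcing the two homomorphisms to agree. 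The final clause, that this universal property characterizes $M\oV(N\oV W)$ up to canonical isomorphism, follows by the standard argument: any two modules equipped with the same universal property receive mutually inverse $V$-homomorphisms from each other, the uniqueness part forcing both composites to be the identity.

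The only mildly delicate point in the above is the verification of condition (b): one must make sure the truncation exponents arising in the composite depend only on the permitted data --- the element $\eta$ of the second factor, respectively the element $u$ of the first factor --- and this is precisely why version (3) of the definition of vertex bilinear map, rather than version (1) or (2), is the convenient one to work with here.
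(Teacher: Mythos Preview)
Your proposal is correct and is precisely the argument the paper has in mind: the paper gives no detailed proof at all, stating only that the proposition ``immediately follows from the universal property of tensor product in Definition~\ref{tensor2},'' and your two-step application of that universal property (first to $N\oV W$ to produce $h$, then to $M\oV(N\oV W)$ via the induced $F_z(u,\eta)=I_z(u,h(\eta))$) is exactly the intended unpacking. Your care with version~(3) of Definition~\ref{bilin} to control the dependence of the truncation exponents is apt, and the uniqueness and characterization arguments via the surjectivity lemma are the standard ones.
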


Similarly, we have

\begin{proposition}\label{3.2}
The tensor products $M\oV N$ and $(M\oV N)\oV W$ (and the
corresponding vertex bilinear maps) satisfy the following
universal property: For any $V$-$\,$modules $Y$ and $Z$, and any
vertex bilinear maps $F\in V\hbox{-}\,Bilinear(M,N;Z)$ and $G\in
V\hbox{-}\,Bilinear(Z,W;Y)$, there exists a unique $V$-$\,$homomorphism
$f:(M\oV N)\oV W\to Y$ such that
\begin{equation*}
f((u\oz v)\oy w)=G_y(F_z(u,v),w)
\end{equation*}
for all $u\in M,v\in N$ and $w\in W$. This universal property
characterizes $(M\oV N)\oV W$ uniquely.
\end{proposition}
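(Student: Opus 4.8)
The plan is to build $f$ by two successive applications of the universal property in Definition \ref{tensor2}, exactly as one constructs associativity morphisms for tensor products over a commutative ring. First, since $F\in V\hbox{-}\,Bilinear(M,N;Z)$, the universal property of $(M\oV N,\ozz)$ furnishes a unique $V$-homomorphism $\overline{F}:M\oV N\to Z$ with $\overline{F}(u\oz v)=F_z(u,v)$ for all $u\in M,v\in N$ (extending $\overline{F}$ canonically to formal series). I would then introduce the $\kk$-bilinear map
\begin{equation*}
H_y:(M\oV N)\times W\longrightarrow Y((y)),\qquad H_y(m,w):=G_y(\overline{F}(m),w),
\end{equation*}
and check that $H\in V\hbox{-}\,Bilinear(M\oV N,W;Y)$. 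Condition (a) of Definition \ref{bilin} for $H$ follows at once from the facts that $\overline{F}$ commutes with $d$ and that $G$ satisfies (a): $H_y(dm,w)=G_y(d\,\overline{F}(m),w)=\frac{d}{dy}H_y(m,w)$ and $H_y(m,dw)=(d-\frac{d}{dy})H_y(m,w)$. For condition (b), fix $a\in V$; since $\overline{F}$ is a $V$-homomorphism we have $H_x(a\z m,w)=G_x(a\z\overline{F}(m),w)$ and $H_x(m,a\z w)=G_x(\overline{F}(m),a\z w)$, so the bounds (\ref{AAa}) and (\ref{BBb}) for $H$ hold with the relevant powers of $(z+x)$ and $(z-x)$ taken to be the ones $G$ supplies for the arguments $\overline{F}(m)$ and $w$; this respects the dependence requirements of Definition \ref{bilin}.(3), since $\overline{F}(m)$ is determined by $m$. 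Hence $H$ is a vertex bilinear map, and the universal property of $\big((M\oV N)\oV W,\ozz\big)$ yields a unique $V$-homomorphism $f:(M\oV N)\oV W\to Y$ with $f(m\oy w)=H_y(m,w)$; reading off coefficients after the substitution $m=u\oz v$ and using $\overline{F}(u\oz v)=F_z(u,v)$ gives $f\big((u\oz v)\oy w\big)=G_y(F_z(u,v),w)$, as required.

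For the uniqueness clause, suppose $f'$ is another $V$-homomorphism satisfying this identity; then $f-f'$ kills every coefficient (in $z$ and in $y$) of $(u\oz v)\oy w$. By the surjectivity lemma recalled in Section \ref{first}, the coefficients of $u\oz v$ span $M\oV N$, and a second application of that lemma shows the coefficients of $m\oy w$ (for $m\in M\oV N$, $w\in W$) span $(M\oV N)\oV W$; combining the two, the coefficients of $(u\oz v)\oy w$ span $(M\oV N)\oV W$, forcing $f=f'$. The final assertion, that this universal property pins down $(M\oV N)\oV W$ up to a unique $V$-isomorphism, is then the usual argument: a $V$-module $T$ equipped with structure maps satisfying the same property receives and sends unique $V$-homomorphisms from and to $(M\oV N)\oV W$, and the two composites, being $V$-endomorphisms that fix the generators $(u\oz v)\oy w$ (resp.\ the corresponding generators of $T$), must be the identities.

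I do not anticipate any real obstacle: the entire content is that composing a vertex bilinear map with a $V$-homomorphism (here $\overline{F}$) is again vertex bilinear, so that the defining construction descends through the two quotients of the iterated tensor product, just as in the commutative-ring case. The single point calling for a little attention is the bookkeeping of the existential $(z\pm x)$-bounds in Definition \ref{bilin}.(3) for $H$, which is disposed of by the inheritance from $G$ noted above. (Proposition \ref{3.1} is proved in exactly the same manner, collapsing the pair $(N,W)$ into $Z$ first.)
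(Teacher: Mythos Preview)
Your proposal is correct and is precisely the argument the paper intends: the paper states just before Proposition~\ref{3.2} that it ``immediately follow[s] from the universal property of tensor product in Definition~\ref{tensor2},'' and your two successive applications of that universal property (first to $M\oV N$ to obtain $\overline{F}$, then to $(M\oV N)\oV W$ using $H_y=G_y(\overline{F}(-),-)$) are exactly what that sentence means. The paper provides no further detail, so your write-up simply fills in the straightforward verification it leaves implicit.
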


\vskip .2cm

Now, we can state the main result of this section.

\begin{theorem}
Let $M,N$ and $W$ be three $V$-$\,$modules. Assume  the existence of
the following  tensor products $M\oV N, (M\oV N)\oV W, N\oV W$,
and $M\oV (N\oV W)$. Then: there exists a unique $V$-$\,$isomorphism
\begin{equation*}
f:(M\oV N)\oV W\to M\oV (N\oV W)
\end{equation*}
such that (coefficient-wise)
\begin{equation*}
f\big((u\ox v)\oy w \big)= u \oxy (v\oy w)
\end{equation*}

\

\noi for all $u\in M, v\in N \hbox{ and } w\in W$ if and only if
the following expressions exist and satisfy:

\vskip .4cm

\noi (1) for any $u\in M$, $v\in N$ and $w\in W$

\begin{equation}\label{cond1}
 u\oxy (v\oy w)\in \Big[M\oV (N\oV W)\Big]((y))((x))
\end{equation}

\vskip .3cm

\noi where the number of negative powers of $x$ depends only on
$u$ and $v$,
and

\vskip .4cm

\noi (2) for any $u\in M$, $v\in N$ and $w\in W$

\begin{equation}\label{cond2}
 (u\oxmy v)\oy w\in \Big[(M\oV N)\oV W\Big]((x))((y))
\end{equation}

\vskip .3cm

\noi where the number of negative powers of $y$ depends only on
$v$ and $w$.
\end{theorem}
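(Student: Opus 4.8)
The statement is an equivalence; I would prove the two implications separately, and in both I would use the surjectivity lemma twice at the outset: the coefficients of $u\ox v$ span $M\oV N$, so those of $(u\ox v)\oy w$ span $(M\oV N)\oV W$, and symmetrically for $M\oV(N\oV W)$. This makes the uniqueness of $f$ (and of any prospective inverse) automatic, so only existence is at issue.

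\emph{Necessity.} Suppose $f$ exists. By Proposition~\ref{p1}(b) the number of negative powers of $x$ in $u\ox v$ is bounded by $N_{u,v}$, so $(u\ox v)\oy w$ already lies in $\big[(M\oV N)\oV W\big]((y))((x))$ with $x$-order depending only on $u,v$; applying $f$ coefficientwise and using $f\big((u\ox v)\oy w\big)=u\oxy(v\oy w)$ carries this into $M\oV(N\oV W)$ while preserving the order bound, which is exactly (\ref{cond1}). For (\ref{cond2}) I would substitute the formal variable $x\mapsto x-y$ in the same identity — legitimate precisely because the left side has bounded-below $x$-order — to get $f\big((u\oxmy v)\oy w\big)=u\ox(v\oy w)$, note that the right side lies in $\big[M\oV(N\oV W)\big]((x))((y))$ with $y$-order $\le N_{v,w}$, and apply $f^{-1}$ coefficientwise.

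\emph{Sufficiency.} Assume (\ref{cond1}) and (\ref{cond2}). I would construct $f$ together with a two-sided inverse $g$ using the universal properties in Propositions~\ref{3.2} and \ref{3.1}. For $f$: by Corollary~\ref{TTTTT} and Proposition~\ref{BBBB},
\begin{equation*}
V\hbox{-}\,Bilinear\big(M\oV N,\,W;\,M\oV(N\oV W)\big)\ \simeq\ V\hbox{-}\,Bilinear\big(M,\,N;\,Vhom(W,\,M\oV(N\oV W))\big),
\end{equation*}
so it suffices to exhibit a vertex bilinear map $\Xi$ of the right-hand type. Condition (\ref{cond1}) grants that, for fixed $u\in M$, $v\in N$, the double series $u\ozy(v\oy w)$ — which need not make sense term by term, just as in the associator formula — is a well-defined element of $\big[M\oV(N\oV W)\big]((y))((z))$ with $z$-order depending only on $u,v$; I would set $[\Xi_z(u,v)]_y(w):=u\ozy(v\oy w)$. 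The verification that each $z$-coefficient of $\Xi_z(u,v)$ is a vertex homomorphism $W\to M\oV(N\oV W)$ in $y$, and that $\Xi$ then satisfies the axioms of a vertex bilinear map, I would carry out by the $\delta$-function calculus, in the style of the proofs that $Vhom(M,N)$ is a $V$-module and of Theorem~\ref{TT}, using that $w\mapsto v\oy w$ is a vertex homomorphism $W\to N\oV W$ and $\eta\mapsto u\oz\eta$ a vertex homomorphism $N\oV W\to M\oV(N\oV W)$. Running the displayed isomorphism backwards yields $G\in V\hbox{-}\,Bilinear\big(M\oV N,W;M\oV(N\oV W)\big)$ with $G_y(u\oz v,w)=u\ozy(v\oy w)$, and Proposition~\ref{3.2} applied with $Z=M\oV N$, $F_z(u,v)=u\oz v$, and this $G$ produces $f$ with $f\big((u\ox v)\oy w\big)=u\oxy(v\oy w)$. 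Symmetrically, condition (\ref{cond2}) lets me build a vertex bilinear map of type $\big(N,\,W;\,Vhom^r(M,(M\oV N)\oV W)\big)$ with slices acting by $u\mapsto(u\ozmy v)\oy w$, and via Theorem~\ref{TT22} and Proposition~\ref{3.1} (with $Z=N\oV W$) I obtain a $V$-homomorphism $g:M\oV(N\oV W)\to(M\oV N)\oV W$ with $g\big(u\oz(v\oy w)\big)=(u\ozmy v)\oy w$. Finally I would check that $g$ and $f$ are mutually inverse on the spanning coefficients:
\begin{equation*}
g\Big(f\big((u\ox v)\oy w\big)\Big)=g\big(u\oxy(v\oy w)\big)=\big[\,g\big(u\oz(v\oy w)\big)\,\big]\big|_{z\to x+y}=\big[(u\ozmy v)\oy w\big]\big|_{z\to x+y}=(u\ox v)\oy w,
\end{equation*}
the substitution $z\to x+y$ being legitimate by (\ref{cond1}) and the last step using $(x+y)-y=x$ in the formal-variable sense; as these elements span $(M\oV N)\oV W$ this gives $g\circ f=\mathrm{id}$, and $f\circ g=\mathrm{id}$ follows symmetrically from (\ref{cond2}).

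\emph{Main obstacle.} The heart of the matter is the sufficiency direction: checking that $\Xi$ (and its mirror image built from (\ref{cond2})) genuinely is a vertex bilinear map — equivalently, that the slices $u\ozy(v\oy w)$, once the shift $z\mapsto z+y$ is made, behave as vertex homomorphisms in $y$. This is the formal-variable / $\delta$-function core, and conditions (\ref{cond1}) and (\ref{cond2}) enter exactly to keep all composites and substitutions inside the appropriate iterated Laurent series spaces, so that the manipulations from the proofs of Theorems~\ref{TT} and \ref{TT22} go through.
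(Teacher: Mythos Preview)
Your proposal is correct and follows essentially the same route as the paper. For necessity the paper argues exactly as you do (reading off (\ref{cond1}) from the defining identity and obtaining (\ref{cond2}) from the inverse $g(u\ox(v\oy w))=(u\oxmy v)\oy w$); for sufficiency the paper likewise builds the vertex bilinear map $[F_z(u,v)]_y(w)=u\ozy(v\oy w)$ into $Vhom(W,M\oV(N\oV W))$, pairs it with the evaluation map, and invokes Proposition~\ref{3.2} (and symmetrically with $Vhom^r$ and Proposition~\ref{3.1} for the inverse). The only cosmetic difference is that you pass through the adjoint isomorphism to land in $V\hbox{-}\,Bilinear(M\oV N,W;\,\cdot\,)$ before applying Proposition~\ref{3.2} with $Z=M\oV N$, whereas the paper applies Proposition~\ref{3.2} directly with $Z=Vhom(W,\,\cdot\,)$ and $G$ the evaluation bilinear map; unwinding the adjoint isomorphism shows these are the same computation. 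You are also right that the technical core is verifying that $\Xi$ (the paper's $F$) and its $Vhom^r$ counterpart are genuine vertex bilinear maps --- the paper devotes two propositions each to these checks, and conditions (\ref{cond1}) and (\ref{cond2}) enter exactly where you say, to control the orders so that the formal substitutions and residue manipulations are legitimate.
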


\

Observe that  the expressions always exist, except for $(u\oxmy
v )\oy w$. We also have that
\begin{equation}\label{cond 2}
(u\ox v)\oy w \, \in \Big[(M\oV N)\oV W\Big]((y))((x)),
\end{equation}
where the finite number of negative powers in $x$ depends only on $u\in M$ and
$v\in N$, and
\begin{equation} \label{cond 1}
u\ox(v\oy w) \, \in \Big[M\oV(N\oV W)\Big]((x))((y)),
\end{equation}
where the finite number of negative powers in $y$ depends only on $v\in N$ and
$w\in W$.

\

A simple argument that prove the "only if" part of the theorem, is
the following: suppose that there exists the $V$-$\,$isomorphism $f$.
Then, by considering the left hand side of $f((u\ox
v)\oy w )= u \oxy (v\oy w)$, and using (\ref{cond 2}),
 we have that the right hand side must satisfy (\ref{cond1}).
 Since $f$ is an isomorphism, then
it is clear that $g:M\oV (N\oV W)\to (M\oV N)\oV W$ with
 $g(u\ox (v\oy w ))= (u \oxmy v)\oy w$ is well defined and it is
the inverse of $f$, and using (\ref{cond 1}), we have that (\ref{cond2})
 must be satisfied.

By using the vertex bilinear properties (that are preserved by
homomorphism), one can see why we should send $(u\ox v)\oy w $ to
$ u \oxy (v\oy w)$ instead of $u \ox (v\oy w)$ when we construct
the isomorphism.

\

The rest of this section is devoted to the proof of the "if" part
of the associativity theorem. The sketch of proof or the basic
idea of the proof is the following: first, we define $F:M\times N
\to Vhom(W, M\oV (N\oV W))((z))$ given by
\begin{equation*}
\Big[F_z(u,v)\Big]_y(w)=u\ozy (v\oy w),
\end{equation*}
and we prove that $F$ is a vertex bilinear map of type $(M,N;
Vhom(W, M\oV (N\oV W)))$. Then we take $G$ as the natural vertex
bilinear map of type $(Vhom(W, M\oV (N\oV W)), W; M\oV (N\oV W))$
given by $G_y(q,w)=q_{\, y}(w)$ for $q\in Vhom(W, M\oV (N\oV W))$ and
$w\in W$. After that, we apply Proposition \ref{3.2} to the vertex
bilinear maps $F$ and $G$ to get a $V$-$\,$homomorphism $f:(M\oV N)\oV
W\to M\oV (N\oV W)$ such that $f((u\oz v)\oy w)=u\ozy (v\oy w)$.
Similarly, we define $H:N\times W \to Vhom^r(M, (M\oV
 N)\oV W)((y))$ given by
\begin{equation*}
\Big[H_y(v,w)\Big]_z(u)=(u\ozmy v)\oy w,
\end{equation*}
and we prove that $H$ is a vertex bilinear map of type $(N,W;
Vhom^r(M, (M\oV N)\oV W))$. Then we take $I$ as the natural vertex
bilinear map of type $(M,Vhom^r(M, (M\oV N)\oV W); (M\oV N)\oV W)$
given by $I_y(u,h)=h_y(u)$ for $h\in Vhom^r(M, (M\oV N)\oV W)$ and
$u\in M$. After that, we apply Proposition \ref{3.1} to the vertex
bilinear maps $H$ and $I$ to get a $V$-$\,$homomorphism $g:M\oV (N\oV
W)\to (M\oV N)\oV W$ such that $g(u\oz (v\oy w))=(u\ozmy v)\oy w$,
obtaining the desired isomorphisms $f$ and $g$. This is the
algebraic and non-graded version of the proof in \cite{DLM}.

\

For the rest of this section we assume that all the tensor
products exist, and we assume the conditions
(\ref{cond1})-(\ref{cond2}) about the Laurent series in the
statement of the theorem.

For any fixed $u\in M$ and $v\in N$, we define a linear map
\begin{equation*}
    \psi(u,v;z,y)\,:\, W\to \Big[(M\oV N)\oV W\Big]((y))((z))
\end{equation*}
as
\begin{equation*}
\psi(u,v\, ;z,y)(w):=u\ozy (v\oy w)
\end{equation*}

\noi for any $w\in W$. Using (\ref{cond1}),  we have
\begin{equation*}
\psi(u,v\,;z,y)=\sum_{n\in \ZZ} \, \psi_n(u,v\, ;y)\, z^{-n-1}
\end{equation*}
with $\psi_k(u,v\, ;y)\equiv 0$ for $k$ sufficiently large and
\begin{equation*}
\psi_n(u,v\, ;y):W\to \Big[M\oV (N\oV W)\Big]((y)).
\end{equation*}

\vskip .4cm

\begin{proposition}
For any $u\in M, v\in N$ and $n\in \ZZ$, we have

\begin{equation*}
\psi_n(u,v\, ; y)\in Vhom(W, M\oV (N\oV W)).
\end{equation*}

\end{proposition}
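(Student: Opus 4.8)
The plan is to verify directly that $\psi_n(u,v\, ;\,\cdot\,)$, regarded as a linear map $W\to[M\oV (N\oV W)]((y))$, satisfies axioms (a) and (b) of Definition \ref{Vhom}. I would argue with the full generating series $\psi(u,v\, ;z,y)(w)=u\ozy (v\oy w)$ at once, since axiom (a) (resp.\ axiom (b)) for all $\psi_n$ simultaneously is equivalent to a single identity in the variables at hand, read off as the coefficient of $z^{-n-1}$. The standing hypothesis (\ref{cond1}) is exactly what keeps everything well defined: $u\ozy (v\oy w)\in[M\oV (N\oV W)]((y))((z))$ with the number of negative powers of $z$ bounded by an integer $N_{u,v}$ depending only on $u,v$; in particular each $\psi_n(u,v\, ;y)$ lies in $[M\oV (N\oV W)]((y))$ and $\psi_m(u,v\, ;\,\cdot\,)\equiv 0$ for $m\geq N_{u,v}$. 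From the two constituent vertex bilinear maps, of type $(N,W;N\oV W)$ and of type $(M,N\oV W;M\oV (N\oV W))$, I would only use their translation--derivation axioms (part (a) of Definition \ref{bilin}) and their locality axioms in the form (\ref{Bb}).

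Axiom (a) will be routine. Combining $v\oy (dw)=\bigl(d-\ddy\bigr)(v\oy w)$ and $u\oz (d\,q)=\bigl(d-\ddz\bigr)(u\oz q)$ for $q\in N\oV W$ (translation--derivation of the two maps) with the chain-rule identity $\ddy (u\ozy q)=\ddz (u\ozy q)$ for fixed $q$ and linearity over the $y$-Laurent series $v\oy w$, a short computation gives
\begin{equation*}
\psi(u,v\, ;z,y)(dw)=\Bigl(d-\ddy\Bigr)\bigl(\psi(u,v\, ;z,y)(w)\bigr),
\end{equation*}
and reading off the coefficient of $z^{-n-1}$ (which $d$ and $\ddy$ do not affect) gives axiom (a) for $\psi_n(u,v\, ;\,\cdot\,)$.

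For axiom (b), fix $a\in V$ and set $k_1=N_{a,u}$, $k_2=N_{a,v}$. Applying the locality axiom (\ref{Bb}) of the bilinear map of type $(M,N\oV W;M\oV (N\oV W))$ with a fresh $V$-action variable $s$ and substituting its series variable by $z+y$ yields $(s-z-y)^{k_1}\,a\sss\bigl(u\ozy q\bigr)=(s-z-y)^{k_1}\,u\ozy\bigl(a\sss q\bigr)$ for every $q\in N\oV W$; extending this linearly over the $y$-Laurent series $q=v\oy w$ (legitimate by (\ref{cond1})), multiplying by $(s-y)^{k_2}$, and invoking the locality axiom (\ref{Bb}) of the bilinear map of type $(N,W;N\oV W)$ inside the argument (the polynomial $(s-y)^{k_2}$ commutes through $u\ozy (\,\cdot\,)$), I would obtain
\begin{equation*}
(s-z-y)^{k_1}(s-y)^{k_2}\;a\sss\bigl(u\ozy (v\oy w)\bigr)=(s-z-y)^{k_1}(s-y)^{k_2}\;u\ozy \bigl(v\oy (a\sss w)\bigr).
\end{equation*}
Since both sides are Laurent in $z$ with no power below $z^{-N_{u,v}}$ (by (\ref{cond1})) while $(s-z-y)^{k_1}=\sum_{j=0}^{k_1}\binom{k_1}{j}(-1)^{j}(s-y)^{k_1-j}z^{j}$, extracting the coefficient of $z^{-n-1}$ gives, with $D_m:=a\sss\bigl(\psi_m(u,v\, ;y)(w)\bigr)-\psi_m(u,v\, ;y)(a\sss w)$,
\begin{equation*}
\sum_{j=0}^{k_1}\binom{k_1}{j}(-1)^{j}(s-y)^{\,k_1+k_2-j}\,D_{n+j}=0\qquad\text{for all }n\in\ZZ,\ w\in W.
\end{equation*}
A descending induction on $n$ then finishes it: $D_m\equiv 0$ for $m\geq N_{u,v}$, and if $(s-y)^{\kappa}D_m\equiv 0$ for all $m>n$ then multiplying by $(s-y)^{\kappa}$ kills the $j\geq 1$ terms and leaves $(s-y)^{\kappa+k_1+k_2}D_n\equiv 0$; hence for each $n$ there is a $k=k(a,u,v,n)\in\NN$, independent of $w$, with $(s-y)^{k}\bigl[a\sss\bigl(\psi_n(u,v\, ;y)(w)\bigr)-\psi_n(u,v\, ;y)(a\sss w)\bigr]=0$ for all $w\in W$, which is axiom (b).

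The step I expect to be the main obstacle is the formal-series bookkeeping in axiom (b): pushing the two locality properties through the composite $u\ozy (v\oy\,\cdot\,)$ necessarily introduces the spurious polynomial factor $(s-z-y)^{k_1}$, and one must then convert the resulting identity in $s,y,z$ into a statement about the individual coefficients $\psi_n$ in $z$ — for which the $w$-independent bound on negative powers of $z$ from (\ref{cond1}) and the descending induction on $n$ are both indispensable. This is the algebraic, non-graded counterpart of the $\delta$-function/locality manipulations used in the proof of Theorem \ref{bbb} and, in the graded case, in \cite{DLM}.
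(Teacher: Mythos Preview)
Your argument is correct; the verification of axiom (a) is exactly the paper's, and your treatment of axiom (b) uses the same two inputs — locality (\ref{Bb}) of both vertex bilinear maps with exponents $N_{a,u}$ and $N_{a,v}$ independent of the second argument, together with the $z$-truncation coming from (\ref{cond1}) — but organizes the final step differently. The paper chooses a single $k$ with $k\geq N_{a,u}$, $k\geq N_{a,v}$ and $k+n\geq N_{u,v}$, then uses the binomial expansion $(x-y)^{2k}=\bigl((x-z-y)+z\bigr)^{2k}$ inside $\mathrm{Res}_{z}\,z^{n}(\,\cdot\,)$: the terms with $(x-z-y)^{i}$ for $i<k$ vanish by the truncation (\ref{3.}), and the surviving terms carry enough powers of $(x-z-y)$ and $(x-y)$ to apply both localities simultaneously, yielding $(x-y)^{3k}\bigl[a\x\psi_{n}(w)-\psi_{n}(a\x w)\bigr]=0$ in one stroke. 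Your route instead keeps the product $(s-z-y)^{k_{1}}(s-y)^{k_{2}}$ intact, reads off a linear recursion among the $D_{m}$ by extracting $z$-coefficients, and then clears it by descending induction on $n$; this is perfectly valid and perhaps more transparent, at the cost of an exponent that grows like $(N_{u,v}-n)(k_{1}+k_{2})$ rather than the paper's uniform $3k$. Either way the exponent depends only on $a,u,v,n$ and not on $w$, which is exactly what axiom (b) requires.
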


\vskip .2cm

\begin{proof}
For  $u\in M, v\in N, w\in W$ and $n\in \ZZ$, we have
\begin{align*}
    \psi(u,v\, ;z,y)(d\, w) & = u\ozy(v\oy
    dw)=u\ozy\Big(d-\frac{d}{dy}\Big)(v\oy w)=
    \Big(d-\frac{d}{dy}\Big)\big(u\ozy(v\oy w)\big)\\
    & =\Big(d-\frac{d}{dy}\Big)\, \psi(u,v\, ;z,y)(w).
\end{align*}
Then $\psi_n(u,v\, ;y)$ satisfies $(a)$ in Definition \ref{Vhom}.
Since $\psi_m(u,v\, ;y)=0$ for all $m\geq N$, for some $N\in\NN$,
then
\begin{equation}\label{3.}
\hbox{Res}_{\, z} \ z^m\, f(y,z)\, \psi(u,v\, ;z,y)=0
\end{equation}

\vskip .3cm

\noi for any $f\in \kk[\,y,z\,]$. Now, using the Definition
\ref{bilin}.(3) where the exponents depend on certain variables,
we have that for any $a\in V$ and $n\in \ZZ$, we take  $k\in \NN$
such that $k+n\geq N$ and

%
\begin{align}\label{1.}
    (x-z)^k\, \, a\x\big(u\oz(v\oy w)\big) & = (x-z)^k \, \, u\oz
    \big( a\x
    (v\oy w)\big) \qquad \hbox{ for all } v\in N, w\in W,\\
\label{2.}
    (x-y)^k\, \, a\x (v\oy w) & = (x-y)^k  \, \,
    (v\oy a\x w) \qquad \qquad \ \ \hbox{ for all }  w\in W.
\end{align}

\noi Then, using (\ref{3.}), (\ref{1.}) and (\ref{2.}), we have

\begin{align*}
(x-y)^{3k}\,\ & a\x \psi_n(u,v\, ; y)(w)  = \hbox{Res}_{\,z}\, z^n
(x-y)^{3k}\ a\x\psi(u,v\, ;z,y)(w)\\
& = \sum_{i=0}^{2k}\ \binom{2k}{i} \ \hbox{Res}_{\,z}\ (x-z-y)^i
z^{n+2k-i} (x-y)^k \ a\x \psi (u,v\, ;z,y)(w)\\
& = \sum_{i=k}^{2k}\ \binom{2k}{i} \ \hbox{Res}_{\,z}\ (x-z-y)^i
z^{n+2k-i} (x-y)^k \ a\x \big(u\ozy(v\oy w)\big)\\
& = \sum_{i=k}^{2k}\ \binom{2k}{i} \ \hbox{Res}_{\,z}\ (x-z-y)^i
z^{n+2k-i} (x-y)^k \  \big(u\ozy(v\oy a\x w)\big)\\
& = \sum_{i=k}^{2k}\ \binom{2k}{i} \ \hbox{Res}_{\,z}\ (x-z-y)^i
z^{n+2k-i} (x-y)^k \ \psi(u,v\, ; z,y)(a\x w)\\
& = \sum_{i=0}^{2k}\ \binom{2k}{i} \ \hbox{Res}_{\,z}\ (x-z-y)^i
z^{n+2k-i} (x-y)^k \ \psi(u,v\, ; z,y)(a\x w)\\
& = \hbox{Res}_{\,z}\, z^n
(x-y)^{3k}\ \psi(u,v\, ;z,y)( a\x w)\\
& = (x-y)^{3k}\,\  \psi_n(u,v\, ; y)(a\x w)\qquad \qquad \hbox{
for all } w\in W.
\end{align*}
Thus $\psi_n(u,v\, ;y)\in Vhom(W, M\oV (N\oV W))$ for any $n\in
\ZZ, u\in M, v\in N$.
\end{proof}

Now, using the previous proposition, we define the $\kk$-bilinear
map
$$
F:M\times N \to Vhom(W, M\oV (N\oV W))((z))
$$
given by
\begin{equation*}
\Big[F_z(u,v)\Big]_y(w)=\psi(u,v\, ;z,y)(w)=u\ozy (v\oy
w)=\sum_{n\in \ZZ} \psi_n(u,v\, ;y)(w)\, z^{-n-1},
\end{equation*}
for any $u\in M,v\in N$ and $w\in W$.

\

\begin{proposition}\label{F}
$F_z$ is a vertex bilinear map of type $(M,N; Vhom(W, M\oV (N\oV
W)))$.
\end{proposition}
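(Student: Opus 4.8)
Throughout put $P=M\oV(N\oV W)$ and $\mathcal H=Vhom(W,P)$, so that $F$ is a map $M\times N\to\mathcal H((z))$; by the preceding proposition $F_z(u,v)\in\mathcal H((z))$ really holds, with $[F_z(u,v)]_y(w)=u\ozy(v\oy w)$. The plan is to verify that $F$ satisfies the three requirements of \deref{bilin}.(3); by \thref{bbb} this exhibits $F$ as a vertex bilinear map of type $(M,N;\mathcal H)$. The tools I will use are, besides \eqref{vhom-action} (the $V$-action on $\mathcal H$), the vertex bilinear maps $\ozz$ attached to $P$ (of type $(M,\,N\oV W\,;\,P)$) and to $N\oV W$ (of type $(N,W;N\oV W)$), together with their associator and commutator formulas \eqref{Iassoc}, \eqref{Icommut}, \eqref{Ittt}; as usual the module on which $a$ acts will be left implicit (cf. \reref{module}). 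Writing out \eqref{vhom-action} for $\Phi\in\mathcal H$, $a\in V$ and $w\in W$ gives
\[
\big[a\z\Phi\big]_y(w)=a\,_{\dot{z+y}}\big([\Phi]_y(w)\big)-[\Phi]_y\big(a\,_{\dot{z+y}}w-a\,_{\dot{y+z}}w\big),
\]
the first $a\,_{\dot{z+y}}$ acting on $P$ and the remaining two on $W$.

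I would first check condition (a), which is immediate. Property (a) of $\ozz$ for $P$ gives $(du)\ozy(v\oy w)=\frac{\partial}{\partial z}\big(u\ozy(v\oy w)\big)$ (chain rule in the $(z+y)$-expansion), so $F_z(du,v)=\frac{d}{dz}F_z(u,v)$. Property (a) of $\ozz$ for $N\oV W$ gives $(dv)\oy w=\frac{d}{dy}(v\oy w)$, and combining this with the elementary identity $\frac{\partial}{\partial y}\big(u\ozy(v\oy w)\big)=\frac{\partial}{\partial z}\big(u\ozy(v\oy w)\big)+u\ozy\big(\frac{d}{dy}(v\oy w)\big)$ and the definition of the $d$-action on $\mathcal H$-valued series (differentiation of the $Vhom$-variable) yields $F_z(u,dv)=\big(d-\frac{d}{dz}\big)F_z(u,v)$.

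Next I would establish the locality relation \eqref{BBb}. Fix $a\in V$, $u\in M$, $v\in N$ and set $k=N_{a,u}$. The associator formula \eqref{Iassoc} for $\ozz$ of $N\oV W$ reads $a\,_{\dot{z+y}}(v\oy w)=(a\z v)\oy w+v\oy\big(a\,_{\dot{z+y}}w-a\,_{\dot{y+z}}w\big)$ (first term acting on $N\oV W$); feeding this and the displayed $\mathcal H$-action into $a\z F_x(u,v)-F_x(u,a\z v)$, the terms in which $a$ hits $w$ cancel and one is left with
\[
\big[a\z F_x(u,v)-F_x(u,a\z v)\big]_y(w)=a\,_{\dot{z+y}}\big(u\,_{\dot{x+y}}\,(v\oy w)\big)-u\,_{\dot{x+y}}\,\big(a\,_{\dot{z+y}}(v\oy w)\big),
\]
the first $a$-action on $P$ and the second on $N\oV W$. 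This is killed by $(z-x)^{k}$: it is exactly identity \eqref{Bb} for the bilinear map $\ozz$ of $P$, applied to $c=v\oy w$ (coefficient-wise in $y$) after the substitution $z\mapsto z+y,\ x\mapsto x+y$, under which $(z-x)$ becomes $(z+y)-(x+y)$. Hence \eqref{BBb} holds with $k=N_{a,u}$, depending only on $a$ and $u$.

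The remaining, and genuinely delicate, point is the associativity relation \eqref{AAa}: for $a\in V$, $u\in M$, $v\in N$ one must produce $n\in\NN$, depending only on $a$ and $v$, with $(z+x)^{n}a\zx F_x(u,v)=(z+x)^{n}F_x(a\z u,v)$. Expanding the right side by the associator formula \eqref{Iassoc} for $\ozz$ of $P$ and the left side by the displayed $\mathcal H$-action (with $z$ replaced by $z+x$), the leading terms $a\,_{\dot{z+x+y}}\big(u\,_{\dot{x+y}}\,(v\oy w)\big)$ agree, and the difference collapses to $u\,_{\dot{x+y}}\,$ applied to a combination of ``twist'' terms in which $a$ acts, via $\ozz$ of $N\oV W$, on $v\oy w$ in two different $(z+x+y)$-orderings and on $w$; applying \eqref{Iassoc} for $\ozz$ of $N\oV W$ (with its free action variable specialized to $z+x$, which is legitimate since that variable is independent of the bilinear variable $y$) absorbs one of these expansions into $(a\z v)\oy w$, and the problem is reduced to showing that $u\,_{\dot{x+y}}\,$, applied to a residual combination built from $a$ acting on $v$ and on $w$ via $\ozz$ of $N\oV W$, is annihilated by $(z+x)^{n}$ for $n$ large. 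Here one reconciles the competing orderings of the three small variables by means of the commutator and iterate formulas \eqref{Icommut}, \eqref{Ittt} for $\ozz$ of $N\oV W$; the residual terms form a finite linear combination supported in negative powers of $(z+x)$ whose range is bounded by $N_{a,v}$, and condition \eqref{cond1} is exactly what guarantees that the number of negative powers of $x$ intervening is bounded uniformly in $w$, so that the resulting $n$ depends only on $a$ and $v$ and not on $u$ or $w$. This expansion bookkeeping — keeping track of the iterated series $z+x+y$ in its various orderings and verifying that the leftover ``$a$ on $N$'' and ``$a$ on $W$'' contributions really are killed by a power of $(z+x)$ independent of $u$ and $w$ — is the one real obstacle; everything else parallels the analytic treatment in \cite{DLM}. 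Once \eqref{AAa} is in hand, \thref{bbb} completes the proof.
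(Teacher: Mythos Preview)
Your treatment of the derivation properties and of locality \eqref{BBb} is essentially correct and parallels the paper (the paper in fact derives the full commutator formula $[a\x F_z(u,v)-F_z(u,a\x v)]_y(w)=[F_z(a\xmz u-a\mzx u,v)]_y(w)$ by the same computation you sketch, and then passes to locality; your direct route via \eqref{Bb} for the $P$-bilinear map reaches the same conclusion).

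The genuine gap is in the associativity step \eqref{AAa}, where the key idea is missing. The paper fixes $k$ depending only on $a$ and $v$ via \eqref{BBb} for the $(N,W;N\oV W)$-bilinear map, namely $(x-y)^k a\x(v\oy w)=(x-y)^k(v\oy a\x w)$ for all $w$; an explicit computation with the $Vhom$-action and \eqref{Iassoc} then shows that $(x+z)^k$ times the difference $[F_z(a\x u,v)-a\xz F_z(u,v)]_y(w)$ equals
\[
(x+z)^k\,\mathrm{Res}_{\,t}\,\big[\delta(z+x+y,t)-\delta(y+z+x,t)\big]\,u\ozy(v\oy a\t w).
\]
This residue is \emph{not} supported in negative powers of $(z+x)$ with range bounded by $N_{a,v}$, contrary to what you assert: the surviving $t$-modes are exactly those with $a_m w\neq 0$, so the range is governed by $N_{a,w}$, which depends on $w$. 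The decisive move is to multiply by an \emph{auxiliary} factor $(y+z+x)^l$ with $l=N_{a,w}$, which kills the residue (since $t^l a\t w\in W[[t]]$), and then to observe that $(y+x+z)^{-l}=(y+z+x)^{-l}$ and that both sides of the resulting identity \eqref{matadora} lie in series spaces where multiplication by this inverse is legitimate; the $w$-dependent factor therefore cancels, leaving $(x+z)^k$ alone with $k$ independent of $u$ and $w$. This cancellation trick (cf.\ the end of the proof of Theorem~5.22 in \cite{Li3}) is absent from your outline. Your invocation of \eqref{cond1} here is misplaced: that condition was already spent in the preceding proposition to ensure $F_z(u,v)\in\mathcal H((z))$, and it does not by itself remove the $w$-dependence of the residue.
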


\begin{proof}
For $u\in M,v\in V$ and $w\in W$, we have
\begin{align*}
\Big[F_z(du,v)\Big]_y(w) & =(du)\ozy (v \oy w)=\frac{d}{dx}
\big(u\ox(v\oy w)\big)_{|_{x=z+y}} = \frac{d}{dz} (u\ozy (v\oy
w))
\\
& = \Big[\frac{d}{dz} \, F_z(u,v)\Big]_y (w),
\end{align*}
and
\begin{align*}
\Big[\Big(d-\frac{d}{dz}\Big)\, F_z(u,v)\Big]_y(w) & =
\Big(\frac{d}{dy}-\frac{d}{dz}\Big)\,
\Big(\Big[F_z(u,v)\Big]_y(w)\Big) =
\Big(\frac{d}{dy}-\frac{d}{dz}\Big)\,\big(u\ozy (v\oy w)\big ) =\\
& = u\ozy\, \frac{d}{dy}(v\oy w) = u\ozy (dv \oy
w)=\Big[F_z(u,dv)\Big]_y(w).
\end{align*}

\vskip .3cm

\noi Then $F_z$ satisfies the properties with respect to $d$. Now,
we prove the commutator formula, from which we obtain (\ref{BB}).
For $a\in V, u\in M, v\in N$ and $w\in W$, using the action on
$Vhom$, we have
\begin{align*}
    \Big[a\x F_z(u,v)\Big]_y(w) & = a\xxy
    \Big(\big[F_z(u,v)\big]_y(w)\Big)-\big[F_z(u,v)\big]_y (a\xxy w
    - a\yx w)\\
    & = a\xxy \big(u\ozy (v\y w)\big)-u\ozy\big(v\oy (a\xxy w- a\yx
    w)\big)
\end{align*}
and
\begin{equation*}
\Big[F_z(u, a\x v)\Big]_y (w)= u\ozy (a\x v\oy w).
\end{equation*}
Then, using associator formula (\ref{Iassoc}) and the action on
the tensor product, we have
\begin{align*}
\Big[a\x F_z(u,v)\Big]_y(w) & - \Big[F_z(u, a\x v)\Big]_y (w) =\\
& = a\xxy \big(u\ozy (v\oy w)\big)-u\ozy\big(a\x v\oy w + v\oy
(a\xxy
w- a\yx w)\big)\\
& =a\xxy \big(u\ozy (v\oy w)\big)-u\ozy a\xxy (v\oy w) \\
& =(a\xmz u - a\mzx u )\ozy (v\oy w)\\
& =\big[F_z(a\xmz u - a\mzx u,v)\big]_y (w)
\end{align*}
and this  gives rise to the commutator formula (\ref{Icommut})
which implies locality or (\ref{BB}). Next, we prove associativity
or (\ref{AA}). For $a\in V$ and $v\in N$, we take $k\in \NN$
(depending on $a$ and $v$) such that
\begin{equation}\label{st}
    (x-y)^k \, a\x (v\oy w)=(x-y)^k \, v\oy a\x w, \qquad \hbox{
    for all } w\in W.
\end{equation}
Now, using the definition of the action on $Vhom$, the associator
formula (\ref{Iassoc}) and (\ref{st}), we have
\begin{align*}
    &(x+z)^k  \Big(\big[F_z(a\x u,v)\big]_y(w) - \big[a\xz\,
    F_z(u,v)\big]_y(w)\Big) = \\
    & =(x+z)^k \Big((a\x u)\ozy(v\oy w)- a\xzy\big(u\ozy (v\oy
    w)\big)+ \hbox{Res}_{\,t}\ \delta(-y+t,x+z) \ u\ozy(v\oy a\t
    w)\Big)\\
    & = (x+z)^k \Big(\hbox{Res}_{\,t}\ \Big[\delta(-y+t,x+z) \ u\ozy(v\oy a\t
    w) - \delta(-z-y+t,x)\ u\ozy a\t (v\oy w)\Big]\Big)\\
    & = \hbox{Res}_{\,t}\ \Big[(x+z)^k\,\delta(-y+t,x+z) \ u\ozy(v\oy a\t
    w)\Big]\\
    & \ \ \ \ - \hbox{Res}_{\,t}\ \Big[
     \Big(\delta(x+z,-y+t)-\delta(z+x,-y+t)\Big)(-y+t)^k\
      u\ozy a\t (v\oy w)\Big]\\
    & =(x+z)^k\ \hbox{Res}_{\,t}\ \Big[
       \delta(z+x,-y+t)\
      u\ozy  (v\oy a\t w)\Big]\\
    & =(x+z)^k\ \hbox{Res}_{\,t}\ \Big[
       \delta(z+x+y,t)-\delta(y+z+x,t)\Big]\
      u\ozy  (v\oy a\t w).
\end{align*}
Therefore, if we take $l(=N_{a,w})$ such that $x^l a\x w\in
W[[x]]$, then
\begin{equation}\label{matadora}
    (x+z)^k (y+z+x)^l \big[F_z(a\x u,v)\big]_y(w) =
    (x+z)^k (y+z+x)^l  \big[a\xz\, F_z(u,v)\big]_y(w).
\end{equation}
Notice that $(y+x+z)^{-l}=(y+z+x)^{-l}$ and that we are allowed to
multiply the right-hand side of (\ref{matadora}) by $(y+x+z)^{-l}$
and to multiply the left-hand side by $(y+z+x)^{-l}$. Then
multiplying both sides by $(y+x+z)^{-l}$ we obtain
\begin{equation*}
(x+z)^k  \big[F_z(a\x u,v)\big]_y(w) =
    (x+z)^k   \big[a\xz\, F_z(u,v)\big]_y(w).
\end{equation*}
Since $k$ does not depend on $w$, we immediately have (\ref{AA}),
as desired (cf. this argument with the last part of the proof of Theorem 5.22 in
\cite{Li3}).
\end{proof}

\

Using Proposition \ref{F}, we have $F\in
V\hbox{-}\,Bilinear(M,N;Vhom(W,M\oV (N\oV W)))$ given by
$[F_z(u,v)]_y(w)=u\ozy(v\oy w)$, and the natural vertex bilinear
map
$$
G\in V\hbox{-}\,Bilinear(Vhom(W,M\oV (N\oV W)),W;M\oV (N\oV W))
$$
given
by $G_y(q,w)=q_{\,y}(w)$. Now, applying Proposition \ref{3.2} to these
vertex bilinear maps, we obtain:

\begin{proposition}\label{3.10}
There exists a unique $V$-$\,$homomorphism $f:(M\oV N)\oV W\to M\oV
(N\oV W)$ such that
\begin{equation*}
f\big((u\oz v)\oy w\big)=u\ozy (v\oy w)
\end{equation*}
for $u\in M,v\in N$ and $w\in W$.
\end{proposition}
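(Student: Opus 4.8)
The plan is to combine the vertex bilinear map $F$ from Proposition~\ref{F} with the evaluation map $G$, and then invoke the universal characterization of $(M\oV N)\oV W$ provided by Proposition~\ref{3.2}. Recall from Proposition~\ref{F} that $F\in V\hbox{-}\,Bilinear\big(M,N;Vhom(W,M\oV (N\oV W))\big)$ with $[F_z(u,v)]_y(w)=u\ozy(v\oy w)$. So the only preparatory work is to check that the evaluation map
\[
G_y\big(q,w\big):=q_{\,y}(w),\qquad q\in Vhom\big(W,M\oV (N\oV W)\big),\ w\in W,
\]
is a vertex bilinear map of type $\big(Vhom(W,M\oV (N\oV W)),\,W;\,M\oV (N\oV W)\big)$.

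First I would verify the $d$-axiom of Definition~\ref{bilin}: $G_y(d\,q,w)=(d\,q)_y(w)=\tfrac{d}{dy}\big(q_y(w)\big)=\tfrac{d}{dy}G_y(q,w)$ by the definition of the action of $d$ on $Vhom$, and $G_y(q,d\,w)=q_y(d\,w)=\big(d-\tfrac{d}{dy}\big)\big(q_y(w)\big)=\big(d-\tfrac{d}{dy}\big)G_y(q,w)$ by axiom~(a) in the definition of a vertex homomorphism. Then I would check axiom~(b) in the form of Definition~\ref{bilin}.(3): for $a\in V$, the identity
\[
(z-x)^{k}\,a\z\,G_x(q,w)=(z-x)^{k}\,a\z\big(q_x(w)\big)=(z-x)^{k}\,q_x\big(a\z w\big)=(z-x)^{k}\,G_x(q,a\z w)
\]
holds for $k$ large enough depending only on $a$ and $q$, which is precisely axiom~(b) of the definition of vertex homomorphism, while
\[
(z+x)^{n}\,a\zx\,G_x(q,w)=(z+x)^{n}\,a\zx\big(q_x(w)\big)=(z+x)^{n}\,(a\z q)_x(w)=(z+x)^{n}\,G_x(a\z q,w)
\]
holds for $n\geq N_{a,w}$, hence depending only on $a$ and $w$, by Proposition~\ref{cheta}(e). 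Thus $G$ is a vertex bilinear map of the asserted type.

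Finally, I would apply Proposition~\ref{3.2} with $Z=Vhom(W,M\oV (N\oV W))$, $Y=M\oV (N\oV W)$, and the vertex bilinear maps $F$ (of type $(M,N;Z)$) and $G$ (of type $(Z,W;Y)$); since the relevant tensor products $M\oV N$ and $(M\oV N)\oV W$ exist by the standing hypothesis, this yields a unique $V$-homomorphism $f:(M\oV N)\oV W\to M\oV (N\oV W)$ with $f\big((u\oz v)\oy w\big)=G_y\big(F_z(u,v),w\big)=[F_z(u,v)]_y(w)=u\ozy(v\oy w)$ for all $u\in M$, $v\in N$, $w\in W$, which is exactly the assertion. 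The real content of the argument is already contained in Proposition~\ref{F}; the only point here that requires care is the dependency structure of the exponents in axiom~(b) for $G$, which forces one to invoke Proposition~\ref{cheta}(e) (giving $n=N_{a,w}$) and the definition of a vertex homomorphism directly, rather than hoping for exponents uniform in $q$ or $w$.
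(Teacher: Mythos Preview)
Your proof is correct and follows exactly the paper's approach: use Proposition~\ref{F} for $F$, take $G$ to be the evaluation map, and apply Proposition~\ref{3.2}. The paper simply calls $G$ ``the natural vertex bilinear map'' without spelling out the verification, whereas you supply the details (correctly matching the dependency structure of the exponents required by Definition~\ref{bilin}(3) via Proposition~\ref{cheta}(e) and the defining property of a vertex homomorphism).
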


\

For any $v\in N$ and $w\in W$, we define a linear map
\begin{equation*}
    \phi(v,w\, ;z,y)\,:\, M\to \Big[(M\oV N)\oV W\Big]((z))((y))
\end{equation*}
given by
\begin{equation*}
\phi(v,w\,;z,y)(u):=(u\ozmy v)\oy w.
\end{equation*}

\vskip .1cm

\noi It follows from the assumption (\ref{cond2}), that this expression
 exists and if
we take
\begin{equation*}
\phi(v,w\,;z,y)=\sum_{n\in \ZZ}\, \phi_n(v,w\,;z)\, y^{-n-1}
\end{equation*}
then $\phi_k(v,w\,;z)\equiv 0$ for $k$ sufficiently large and
\begin{equation*}
\phi_n(v,w\, ;z):M\to \Big[(M\oV N)\oV W\Big]((z)).
\end{equation*}

\

\begin{proposition}
For any $v\in N, w\in W$ and $n\in \ZZ$, we have
\begin{equation*}
\phi_n(v,w\,;z)\in Vhom^r \big(M,(M\oV N)\oV W\big).
\end{equation*}
\end{proposition}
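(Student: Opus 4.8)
The plan is to transcribe, with the roles of the associativity and the locality conditions interchanged, the argument just given for $\psi_n(u,v\,;y)$, since $Vhom^r$ is governed by the translated condition (\ref{right-b}) rather than by (\ref{left-b}). First I would check condition $(a)$. Since the vertex bilinear map $\oz$ of $M\oV N$ satisfies $(du)\oz v=\frac{d}{dz}(u\oz v)$ (part $(a)$ of Definition \ref{bilin}), the chain rule gives $(du)\ozmy v=\frac{d}{dz}(u\ozmy v)$, so
\[
\phi(v,w\,;z,y)(du)=\big((du)\ozmy v\big)\oy w=\frac{d}{dz}\Big(\big(u\ozmy v\big)\oy w\Big)=\frac{d}{dz}\,\phi(v,w\,;z,y)(u);
\]
taking the coefficient of $y^{-n-1}$ yields $\phi_n(v,w\,;z)(du)=\frac{d}{dz}\phi_n(v,w\,;z)(u)$, which is $(a)$ for $\phi_n(v,w\,;z)$.

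For condition $(b)$, fix $a\in V$ and a fresh variable $x$; the goal is a $k\in\NN$, depending only on $a,v,w,n$ (not on $u$), with $(x+z)^k\,a\xz\big(\phi_n(v,w\,;z)(u)\big)=(x+z)^k\,\phi_n(v,w\,;z)(a\x u)$ for all $u\in M$, which is exactly (\ref{right-b}) for the map $\phi_n(v,w\,;z)$. The idea is to transport $a$ through the two tensor factors of $\phi(v,w\,;z,y)(u)=\big(u\ozmy v\big)\oy w$ onto $u$. Using the associativity condition (Definition \ref{bilin}.(3)) for the outer tensor $(M\oV N)\times W\to\big((M\oV N)\oV W\big)((y))$ with its auxiliary variable specialized to $x+z-y$, and then the associativity condition for $M\oV N$ itself (with auxiliary variable $x$), one gets, for a single large enough $k$ and all $u\in M$,
\[
(x+z)^k\,a\xz\big(p\oy w\big)=(x+z)^k\,\big((a\,_{\dot{x+z-y}}\,p)\oy w\big),\qquad (x+z-y)^k\,a\,_{\dot{x+z-y}}\,\big(u\ozmy v\big)=(x+z-y)^k\,\big((a\x u)\ozmy v\big),
\]
the required exponents depending only on $(a,w)$ and $(a,v)$; combining them (using $\kk$-bilinearity of the outer tensor to move the scalar $(x+z-y)^k$ onto its first argument) gives
\[
(x+z)^k(x+z-y)^k\,a\xz\,\phi(v,w\,;z,y)(u)=(x+z)^k(x+z-y)^k\,\phi(v,w\,;z,y)(a\x u).
\]

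It then remains to pass from $\phi$ to $\phi_n$ exactly as in the proof for $\psi_n$. Write $\phi_n(v,w\,;z)(u)=\mathrm{Res}_{\,y}\,y^n\,\phi(v,w\,;z,y)(u)$, pick $N$ with $\phi_m(v,w\,;z)\equiv 0$ for $m\geq N$, and enlarge $k$ so that also $k+n\geq N$. Expanding $(x+z)^{3k}=(x+z)^k\big((x+z-y)+y\big)^{2k}=(x+z)^k\sum_{i}\binom{2k}{i}(x+z-y)^i y^{2k-i}$ and applying $\mathrm{Res}_{\,y}\,y^n$, the terms with $i<k$ vanish (the accompanying power of $y$ is then too large for the residue, by the choice of $k$), and each surviving term $i\geq k$ carries a factor $(x+z-y)^k$, so the last displayed identity lets one replace $a\xz\,\phi(v,w\,;z,y)(u)$ by $\phi(v,w\,;z,y)(a\x u)$; re-summing and restoring the vanishing terms gives $(x+z)^{3k}\,a\xz\,\phi_n(v,w\,;z)(u)=(x+z)^{3k}\,\phi_n(v,w\,;z)(a\x u)$ for all $u\in M$, which is (\ref{right-b}) with the integer $3k$. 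Hence $\phi_n(v,w\,;z)\in Vhom^r\big(M,(M\oV N)\oV W\big)$. The main obstacle is the first of the two displayed identities above: transporting $a$ onto the \emph{first} tensor factor forces the use of the associativity condition with the outer tensor's auxiliary variable set to $x+z-y$ (so that the operator $a\xz$ appearing in (\ref{right-b}) is literally the one being moved), which needs care with the formal-variable expansions; this bookkeeping is of the type already carried out in the proof of Proposition \ref{F} and can, if desired, be made fully rigorous there by writing everything with explicit $\delta$-functions.
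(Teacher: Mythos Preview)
Your argument is correct and is essentially the paper's own proof, with a harmless change of variable: the paper sets the total action variable to a single letter (proving $x^{3k}\,a\x\,\phi_n(v,w\,;z)(u)=x^{3k}\,\phi_n(v,w\,;z)(a\xmz u)$), whereas you keep it as $x+z$. Concretely, the paper expands $x^{2k}=\big((x-y)+y\big)^{2k}$, drops the high-$y$ terms via the truncation of $\phi$, uses the outer associativity $x^k a\x(p\oy w)=x^k(a\xmy p)\oy w$, and then the inner associativity in the shifted form $(x-y)^k a\xmy(u\ozmy v)=(x-y)^k\big((a\xmz u)\ozmy v\big)$; this is exactly your two displayed identities after replacing your $x+z$ by a single variable. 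The formal-variable bookkeeping you flag at the end (the three-variable expression $x+z-y$) disappears in the paper's parametrisation, which is the only real advantage of its choice; there is no gap in your version.
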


\

\begin{proof}
For $v\in N, w\in W$ and $n\in \ZZ$, we have
\begin{align*}
\phi(v,w\,;z,y)(du) & =\big(du\ozmy v\big)\oy w=\Big[\frac{d}{dx}
\big((u\ox v)\oy w\big)\Big]_{x=z-y}=\frac{d}{dz} \big((u\ozmy
v)\oy w\big)\\
& =\frac{d}{dz} \, \phi(v,w\,;z,y)(u).
\end{align*}
Then, $\phi_n(v,w\,;z)$ satisfies (a) in the definition of $Vhom^r$.
Let $N\in\NN$ be such that  $\phi_m(v,w\,;z)=0$ for all $m\geq N$.
Now, for any $a\in V$ and $n\in \ZZ$, we take $k\in \NN$ such that
$k+n\geq N$, and
\begin{align*}
    x^k\ a\x(u\oz v) & = x^k\ (a\xmz u)\oz v \qquad\qquad \,  \ \
    \hbox{ for all
    } u\in M,\\
    x^k\ a\x\big((u\oz v)\oy w \big) & = x^k\ \big( a\xmy (u\oz
    v)\big) \oy w \qquad \hbox{ for all } u\in M \hbox{ and } v\in
    N.
\end{align*}
Then, we obtain
\begin{align*}
     x^{3k}\ a\x  \big( \phi_n(v,w\,;z)(u)\big) &=
     \hbox{Res}_{\, y}\, y^nx^{3k} \, a\x \big(
     \phi(v,w\,;z,y)(u)\big)\\
     & = \hbox{Res}_{\, y}\, \sum_{i=0}^{2k}\binom{2k}{i} \,
     (x-y)^{2k-i} y^{n+i}x^k\, a\x \big(
     \phi(v,w\,;z,y)(u)\big)\\
     & = \hbox{Res}_{\, y}\, \sum_{i=0}^{k}\binom{2k}{i} \,
     (x-y)^{2k-i} y^{n+i}x^k\, a\x \big(
     \phi(v,w\,;z,y)(u)\big)\\
     & = \hbox{Res}_{\, y}\, \sum_{i=0}^{k}\binom{2k}{i} \,
     (x-y)^{2k-i} y^{n+i}x^k\,  \big( a\xmy(u\ozmy v)\big)\oy w\\
     & = \hbox{Res}_{\, y}\, \sum_{i=0}^{k}\binom{2k}{i} \,
     (x-y)^{2k-i} y^{n+i}x^k\,  \big( (a\xmz u)\ozmy v\big)\oy w\\
     & = \hbox{Res}_{\, y}\, \sum_{i=0}^{2k}\binom{2k}{i} \,
     (x-y)^{2k-i} y^{n+i}x^k\,  \big( (a\xmz u)\ozmy v\big)\oy w\\
     & = \hbox{Res}_{\, y}\,
     x^{3k} y^{n}\,  \big( (a\xmz u)\ozmy v\big)\oy w \\
     & =
     \hbox{Res}_{\, y}\, y^nx^{3k} \,  \big(
     \phi(v,w\,;z,y)(a\xmz u)\big)\\
     & =x^{3k}\    \phi_n(v,w\,;z)(a\xmz u) \qquad\qquad
     \hbox{ for all } u\in M.
\end{align*}
Then, $\phi_n(v,w\,;z)\in Vhom^r(M,(M\oV N)\oV W)$ for all $v\in N,
w\in W$ and $n\in \ZZ$.
\end{proof}

Now, we define a $\kk$-bilinear map
\begin{equation*}
H_y:N\times W\to Vhom^r(M,(M\oV N)\oV W)((y))
\end{equation*}
given by
\begin{equation*}
\Big[H_y(v,w)\Big]_z(u):=\phi(v,w\,;z,y)(u)=(u \ozmy v) \oy w
\end{equation*}
for any $v\in N$, $w\in W$ and $u\in M$.

\

\begin{proposition}\label{pppp}
 $H_y$ is a vertex bilinear map of type $(N, W; Vhom^r(M,(M\oV
N)\oV W))$.
\end{proposition}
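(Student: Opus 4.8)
The plan is to verify, directly from Definition \ref{bilin}.(3), that $H_y$ is a vertex bilinear map of type $(N,W;Vhom^r(M,(M\oV N)\oV W))$; the argument is the mirror image of the proof of Proposition \ref{F}, with the pair $(M,N)$ replaced by $(N,W)$, with $Vhom$ replaced by $Vhom^r$, and with the shift $x\mapsto z+y$ replaced by $x\mapsto z-y$, so I only indicate the three points that must be checked. First, $H_y$ is well defined with values in $Vhom^r(M,(M\oV N)\oV W)((y))$: by the previous proposition each coefficient $\phi_n(v,w\,;z)$ lies in $Vhom^r(M,(M\oV N)\oV W)$, and by assumption (\ref{cond2}) the expression $(u\ozmy v)\oy w$ has only finitely many negative powers of $y$, the bound depending only on $v$ and $w$.

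For the translation--derivation identities, evaluating on $z$ and $u\in M$ gives $\big[H_y(dv,w)\big]_z(u)=(u\ozmy dv)\oy w$ and $\big[H_y(v,dw)\big]_z(u)=(u\ozmy v)\oy dw$; rewriting both with the translation--derivation axioms of the intertwining operator $\ozz$ (applied to the inner operator $u\ox v$ and to the outer one $(\cdot)\oy(\cdot)$), the chain rule for the substitution $x=z-y$, and the definition $(d\,f)_z(u)=\big(d-\frac{d}{dz}\big)(f_z(u))$ of the $d$-action on $Vhom^r$, one obtains $H_y(dv,w)=\frac{d}{dy}H_y(v,w)$ and $H_y(v,dw)=\big(d-\frac{d}{dy}\big)H_y(v,w)$. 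This step is routine $\delta$-function and chain-rule bookkeeping.

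For locality (condition (\ref{BB}), commuting $a\in V$ past the $W$-slot) I would expand the difference of $\big[a\x H_y(v,w)\big]_z(u)$ (via the $Vhom^r$-action formula) and $\big[H_y(v,a\x w)\big]_z(u)$, apply the associator formula (\ref{Iassoc}) for $\ozz$ on the pair $(N,W)$ to identify one piece as $a$ acting through the outer operator, and then the commutator formula (\ref{Icommut}) for $\ozz$ on the pair $(M,N\oV W)$ to collapse the two outer $a$-terms; exactly as in Proposition \ref{F} these telescope into the commutator formula
\begin{equation*}
a\x H_y(v,w)-H_y(v,a\x w)=H_y(a\xmy v-a\myx v,\,w),
\end{equation*}
and multiplying by $(x-y)^k$ with $k\ge N_{a,v}$ annihilates the right-hand side, giving (\ref{BB}) with $k$ depending only on $a$ and $v$, not on the $Vhom^r$-input $u\in M$ --- the point flagged in the remark after the definition of $Vhom^r$. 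For associativity (condition (\ref{AA}), commuting $a$ past the $N$-slot) fix $a\in V$, $v\in N$ and choose $k\in\NN$ (depending only on $a,v$) with $(x-y)^k\,a\x(v\oy w)=(x-y)^k\,(v\oy a\x w)$ for all $w$; then, following the last paragraph of the proof of Proposition \ref{F}, compute $(x+y)^k$ times $\big[H_y(a\x v,w)\big]_z(u)-\big[a\xxy H_y(v,w)\big]_z(u)$ with the help of an auxiliary residue, the $Vhom^r$-action, the associator formula (\ref{Iassoc}) for the inner $\ozz$, the chosen $k$, and the module action on $(M\oV N)\oV W$; the only residual term is a $\delta$-function forcing the auxiliary variable to equal $y+x+z$, which is removed by multiplying by $(y+x+z)^l$ with $l=N_{a,u}$ (where $u$ is the $Vhom^r$-input) and dividing it back out, using $(y+x+z)^{-l}=(y+x+z)^{-l}$. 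Since $k$ depends neither on $u$ nor on $w$, this yields (\ref{AA}).

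The main obstacle is not conceptual but the variable bookkeeping: $H_y(v,w)$ is a composite of two intertwining operators linked by the shift $x=z-y$, so at each step the $Vhom^r$-action and the associator/commutator formulas must be applied with the correct expansion variables (for instance writing the action parameter $x$ as $(x-y)+y$ so that the operator variable $y$ of the outer $\ozz$ is respected), and one must keep track of which data each integer depends on so as to land precisely in Definition \ref{bilin}.(3) --- in particular so that the exponent in (\ref{BB}) and (\ref{AA}) never depends on the $Vhom^r$-input. Given the complete parallel with Proposition \ref{F}, none of these steps is genuinely difficult, but the computation is long.
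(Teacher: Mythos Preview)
Your outline has a structural confusion that matters. In Proposition~\ref{pppp} the expression is $(u\ozmy v)\oy w$: the \emph{inner} intertwining operator is of type $(M,N;M\oV N)$ and the \emph{outer} one is of type $(M\oV N,W;(M\oV N)\oV W)$. The pairs $(N,W)$ and $(M,N\oV W)$ you invoke, and the identity $(x-y)^k\,a\x(v\oy w)=(x-y)^k\,(v\oy a\x w)$, come from the setting of Proposition~\ref{F} (where the decomposition is $u\ozy(v\oy w)$) and have no meaning here---there is no $v\oy w$ in $H_y$. So your sketch, as written, does not apply.

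The paper does \emph{not} prove Proposition~\ref{pppp} by mirroring Proposition~\ref{F}; it takes a different route in both halves. For locality it fixes $n\ge N_{a,v}$ and $k\ge N_{a,u}$, computes
\[
(x-y)^n(x-z)^k\Big(\big[a\x H_y(v,w)\big]_z(u)-\big[H_y(v,a\x w)\big]_z(u)\Big)=0
\]
directly from the $Vhom^r$-action, the action formula on the inner $\ozz$, and the $\delta$-identities; then it multiplies by $(-z+x)^{-k}$ (legitimate on both sides) to remove the $u$-dependent factor and obtain (\ref{BB}) with exponent $n$ depending only on $a,v$. This avoids ever writing down a commutator formula with action parameter $x-y$ or $-y+x$ and intertwining parameter $z-y$: your proposed commutator identity would force you to interpret things like $a_{(-y+x)-(z-y)}$, whose expansion convention is not the one you want, and this is precisely the bookkeeping trap you anticipated but did not resolve.

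For associativity the paper again departs from Proposition~\ref{F}: it chooses $k$ depending on $a$ and $w$ (not $a$ and $v$) so that $(x+y)^k\,a\xxy(q\oy w)=(x+y)^k\,(a\x q)\oy w$ for all $q\in M\oV N$---this is associativity of the \emph{outer} operator. With this choice, expanding $(x+y)^k\big[a\xxy H_y(v,w)\big]_z(u)$ via the $Vhom^r$-action and the action formula on the inner tensor product, the two residue terms cancel directly (since $\delta(x-t,z-y)$ and $\delta(z+t,x+y)$ agree), and one lands on $(x+y)^k\big[H_y(a\x v,w)\big]_z(u)$ with no leftover factor to divide out. No auxiliary $(y+x+z)^l$ is needed.
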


\begin{proof}
For any $u\in M,v\in N$ and $w\in W$, we have
\begin{align*}
\Big[H_y(dv,w)\Big]_z(u) & = (u\ozmy dv)\oy
w=\Big[\Big(d-\frac{d}{dx}\Big)(u\ox v)\Big]\oy w_{\,|_{x=z-y}}\\
    & =\Big(\frac{d}{dy}\big[(u\ox v)\oy
    w\big]\Big)_{|_{x=z-y}}-\frac{d}{dx}\big[(u\ox v)\oy
    w\big]_{|_{x=z-y}}\\
    & =\frac{d}{dy}\big((u\ozmy v)\oy
    w\big)=\frac{d}{dy} \Big[H_y(v,w)\Big]_z(u),
\end{align*}
and
\begin{align*}
\Big[\Big(d-\frac{d}{dy} \Big)\, H_y(v,w)\Big]_z(u) & =
\Big(d-\frac{d}{dz}-\frac{d}{dy} \Big)\, \Big[H_y(v,w)\Big]_z(u) =
\Big(d-\frac{d}{dz}-\frac{d}{dy} \Big)\, \big((u\ozmy v)\oy
w\big)\\
& = \Big[\Big(d-\frac{d}{dy} \Big)\, \big((u\ox v)\oy
w\big)\Big]_{x=z-y}=(u\ozmy v)\oy dw=\Big[H_y(v,dw)\Big]_z(u).\\
\end{align*}
Then $H_y$ satisfies the properties with respect to $d$. Now, we
prove that $H_y$ satisfies locality or (\ref{BB}). For $a\in
V,u\in M$ and $v\in N$, we take $k$ and $n$ positive integers such
that
\begin{equation*}
x^k\, a\x u\in M[[x]]\ \ \ \   \hbox{ and }\ \ \ \  x^n \, a\x
v\in N[[x]].
\end{equation*}
Given $w\in W$ and using the action on $Vhom^r$ and the action on
the tensor product, we have
\begin{align*}
    (x-y)^n(x-z)^k & \Big(\big[a\x H_y(v,w)\big]_z(u) -
    \big[ H_y(v,a\x w)\big]_z(u)\Big)=\\
    &=(x-y)^n(x-z)^k \ \Big(a\x\big((u\ozmy v)\oy w\big) - (u\ozmy
    v)\oy a\x w\Big)\\
    &=(x-y)^n(x-z)^k \ \hbox{Res}_{\, t}\ \delta(y+t,x)\big(a\t
    (u\ozmy v)\big)\oy w\\
    &=(x-y)^n \ \hbox{Res}_{\, t}\ \delta(y+t,x)\, (t-(z-y))^k\ \big(a\t
    (u\ozmy v)\big)\oy w\\
    &=(x-y)^n(x-z)^k \ \hbox{Res}_{\, t}\ \delta(y+t,x) \
    \big(u\ozmy a\t v\big)\oy w\\
    &=(x-z)^k \ \hbox{Res}_{\, t}\
    \Big(\delta(x-y,t)-\delta(-y+x,t)\Big) \ (x-y)^n\
    \big(u\ozmy a\t v\big)\oy w =0
\end{align*}
Therefore
\begin{equation}\label{qqqq}
    (x-y)^n(x-z)^k \big[a\x H_y(v,w)\big]_z(u) = (x-y)^n(x-z)^k
    \big[ H_y(v,a\x w)\big]_z(u).
\end{equation}
Observe that we are allowed to multiply both sides of (\ref{qqqq})
by $(-z+x)^{-k}$, and we obtain
\begin{equation*}
(x-y)^n \big[a\x H_y(v,w)\big]_z(u) = (x-y)^n
    \big[ H_y(v,a\x w)\big]_z(u).
\end{equation*}
Since $n$ does not depend on $u$, we have proved locality.

Now, we prove associativity or (\ref{AA}). Let $k$ be a positive
integer (depending only on $a\in V$ and $w\in W$) such that
\begin{equation}\label{eqqq}
    (x+y)^k\ a\xxy (q\oy w)= (x+y)^k\ (a\x q)\oy w
\end{equation}
for any $q\in M\oV N$. Then, using the action on $Vhom^r$,
(\ref{eqqq}) and the action on the tensor product, we obtain
\begin{align*}
     (x+y)^k \ &\Big[a\xxy H_y(v,w)\Big]_z(u)=\\
     &=(x+y)^k \
     a\xxy\big((u\ozmy v)\oy w\big)-\hbox{Res}_{\, t}\
     \delta(z+t,x+y) \, (x+y)^k \big((a\t u)\ozmy v\big)\oy w\\
&=(x+y)^k \
     \big(a\x (u\ozmy v)\big)\oy w-\hbox{Res}_{\, t}\
     \delta(z+t,x+y) \, (x+y)^k \big((a\t u)\ozmy v\big)\oy w\\
&=(x+y)^k \
      \big(u\ozmy a\x v\big)\oy w +\hbox{Res}_{\, t}\
     \delta(x-t,z-y) \, (x+y)^k \big((a\t u)\ozmy v\big)\oy w\\
&\hskip 4.57cm -\hbox{Res}_{\, t}\
     \delta(z+t,x+y) \, (x+y)^k \big((a\t u)\ozmy v\big)\oy w\\
&=(x+y)^k \
      \big(u\ozmy a\x v\big)\oy w\\
&=(x+y)^k\Big[H_y(a\x v,w)\Big]_z(u) \qquad \quad \hbox{ for all
}u\in M.
\end{align*}
This proves the associativity or (\ref{AA}). Therefore $H$ is a
vertex bilinear map.
\end{proof}

Using Proposition \ref{pppp}, we have $H\in
V\hbox{-}\,Bilinear(N,W;Vhom^r(M,(M\oV N)\oV W))$ given by
\begin{equation*}
\Big[H_y(u,v)\Big]_z(u)=(u\ozmy v)\oy w,
\end{equation*}

\vskip .2cm

\noi and the natural vertex bilinear map $I$ of type
$(M,Vhom^r(M,(M\oV N)\oV W); (M\oV N)\oV W)$ given by
$I_y(u,h)=h_y(u)$. Now, applying Proposition \ref{3.1} to these
vertex bilinear maps, we obtain:

\begin{proposition}\label{3.14}
There exists a unique $V$-$\, $homomorphism $g:M\oV (N\oV W)\to (M\oV
N)\oV W$ such that
\begin{equation*}
g\big(u\oz(v\oy w)\big)=(u\ozmy v)\oy w
\end{equation*}

\vskip .2cm

\noi for all $u\in M,v\in N$ and $w\in W$.
\end{proposition}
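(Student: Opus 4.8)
The plan is to apply \prref{3.1} to the vertex bilinear map $H$ produced in \prref{pppp} together with the ``evaluation'' vertex bilinear map $I$, in exact parallel with the way \prref{3.10} was obtained from $F$ and $G$, but with the two tensor factors interchanged and $Vhom$ replaced by $Vhom^r$. So the first thing I would do is record that $I_y(u,h):=h_y(u)$, for $u\in M$ and $h\in Vhom^r(M,(M\oV N)\oV W)$, is indeed a vertex bilinear map of type $\big(M,\,Vhom^r(M,(M\oV N)\oV W);\,(M\oV N)\oV W\big)$. The two identities with respect to $d$ are immediate: $I_y(du,h)=h_y(du)=\frac{d}{dy}\,h_y(u)=\frac{d}{dy}\,I_y(u,h)$ by part (a) of the definition of a right vertex homomorphism, and $I_y(u,dh)=(dh)_y(u)=\big(d-\frac{d}{dy}\big)\big(h_y(u)\big)=\big(d-\frac{d}{dy}\big)I_y(u,h)$ by the very definition of the operator $d$ on $Vhom^r$. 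For the locality and associativity clauses of \deref{bilin}.(3), rewriting $I_y(u,a\z h)=(a\z h)_y(u)$ by means of the $V$-action on $Vhom^r$ reduces both to the module axioms of $(M\oV N)\oV W$ and to the fact that $Vhom^r(M,(M\oV N)\oV W)$ is itself a $V$-module; this is the same routine verification that underlies the ``natural vertex bilinear map $G$'' used in the proof of \prref{3.10}.

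With $H\in V\hbox{-}\,Bilinear\big(N,W;Vhom^r(M,(M\oV N)\oV W)\big)$ available from \prref{pppp} and $I\in V\hbox{-}\,Bilinear\big(M,Vhom^r(M,(M\oV N)\oV W);(M\oV N)\oV W\big)$ from the previous step, I would then invoke \prref{3.1} with $Z=Vhom^r(M,(M\oV N)\oV W)$ and $Y=(M\oV N)\oV W$. This yields a unique $V$-homomorphism $g:M\oV(N\oV W)\to (M\oV N)\oV W$ with
\[
g\big(u\oz(v\oy w)\big)=I_z\big(u,H_y(v,w)\big)=\big[H_y(v,w)\big]_z(u)=\phi(v,w\,;z,y)(u)=(u\ozmy v)\oy w
\]
for all $u\in M$, $v\in N$ and $w\in W$, read coefficient-wise in $z$ and $y$; this is exactly the asserted formula, and uniqueness of $g$ is part of the conclusion of \prref{3.1} (ultimately a consequence of the surjectivity of the canonical vertex bilinear map of $M\oV(N\oV W)$).

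The proof is short because the genuine work — showing that $\phi_n(v,w\,;z)$ lands in $Vhom^r$ and that $H$ is a vertex bilinear map — was already carried out in \prref{pppp}. The one point that really needs attention is the bookkeeping of the three formal variables: the outer tensor variable $z$ of $M\oV(N\oV W)$ must be the one fed to $I$ while the inner variable $y$ must be the one carried by $H$, and one has to check that with this identification the composite produces $(u\ozmy v)\oy w$ rather than a shifted expression. Note also that the very existence of $\phi(v,w\,;z,y)$ — that is, the finiteness of the negative powers of $y$ occurring — is guaranteed precisely by hypothesis (\ref{cond2}) of the theorem, which is why that assumption is needed at this stage of the argument.
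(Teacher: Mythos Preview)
Your proposal is correct and follows exactly the approach of the paper: apply \prref{3.1} with $Z=Vhom^r(M,(M\oV N)\oV W)$, $Y=(M\oV N)\oV W$, the map $H$ from \prref{pppp}, and the evaluation map $I_y(u,h)=h_y(u)$. The paper states this in one sentence immediately before \prref{3.14}, taking the vertex bilinearity of $I$ for granted as ``natural''; your additional verification of the $d$-identities and the bilinear axioms for $I$ is fine but not something the paper spells out.
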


\noi By Proposition \ref{3.10} and Proposition \ref{3.14}, we
have:

\begin{theorem}
Let $V$ be a vertex algebra and $M,N$ and $W$ be three
$V$-$\,$modules. Assume that the tensor products $M\oV N, N\oV W,
(M\oV N)\oV W$ and $M\oV (N\oV W)$ exist, and assume that

\vskip .2cm

\noi (1) for any $u\in M$, $v\in N$ and $w\in W$

\begin{equation*}
 u\oxy (v\oy w)\in \Big[M\oV (N\oV W)\Big]((y))((x))
\end{equation*}

\vskip .3cm

\noi where the number of negative powers of $x$ depends only on
$u$ and $v$,
and

\vskip .2cm

\noi (2) for any $u\in M$, $v\in N$ and $w\in W$

\begin{equation*}
 (u\oxmy v)\oy w\in \Big[(M\oV N)\oV W\Big]((x))((y))
\end{equation*}

\vskip .3cm

\noi where the number of negative powers of $y$ depends only on
$v$ and $w$. Then the
$V$-$\,$homomorphisms $f$ and $g$ in Proposition \ref{3.10} and
Proposition \ref{3.14}, are $V$-$\,$isomorphisms between $M\oV (N\oV
W)$ and $(M\oV N)\oV W$.
\end{theorem}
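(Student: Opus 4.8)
The plan is to prove that the $V$-homomorphism $f$ of Proposition \ref{3.10} and the $V$-homomorphism $g$ of Proposition \ref{3.14} are mutually inverse; since each is already a $V$-homomorphism, they will then automatically be $V$-isomorphisms, which is exactly the assertion. By the uniqueness clause in Proposition \ref{3.2} (equivalently, by the surjectivity lemma, Lemma 5.1.3 of \cite{Li1}), a $V$-endomorphism of $(M\oV N)\oV W$ fixing every coefficient of each $(u\ox v)\oy w$ must be the identity, and likewise for $M\oV(N\oV W)$ by Proposition \ref{3.1}. So I would only need to check, coefficient-wise, that $g\circ f$ and $f\circ g$ fix these generators.

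For $g\circ f$: by Proposition \ref{3.10}, $f\big((u\ox v)\oy w\big)=u\oxy(v\oy w)$, which lies in $\big[M\oV(N\oV W)\big]((y))((x))$ by (\ref{cond1}). The key step is to recognize this series as the formal substitution $z\mapsto x+y$ in $u\oz(v\oy w)$, namely
\begin{equation*}
u\oxy(v\oy w)=\mathrm{Res}_{\,z}\ \delta(x+y,z)\ \big(u\oz(v\oy w)\big),
\end{equation*}
the series being termwise finite by (\ref{cond 1}) and (\ref{cond1}). Applying the $\kk$-linear map $g$ and using Proposition \ref{3.14},
\begin{equation*}
g\big(u\oxy(v\oy w)\big)=\mathrm{Res}_{\,z}\ \delta(x+y,z)\ \big((u\ozmy v)\oy w\big).
\end{equation*}
Now write $(u\ozmy v)\oy w=\sum_{n}\big((u_{\,n}v)\oy w\big)(z-y)^{-n-1}$, a sum bounded above in $n$ and well defined in $\big[(M\oV N)\oV W\big]((z))((y))$ by (\ref{cond2}). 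The substitution $z\mapsto x+y$ replaces $(z-y)^{-n-1}$ by $\big((x+y)-y\big)^{-n-1}=x^{-n-1}$, using the standard binomial-expansion identity $\big((x+y)-y\big)^m=x^m$ in $\kk[[y]]((x))$; hence the right-hand side becomes $\sum_n\big((u_{\,n}v)\oy w\big)x^{-n-1}=(u\ox v)\oy w$ by bilinearity. Thus $g\circ f$ fixes each $(u\ox v)\oy w$, so $g\circ f=\mathrm{id}$.

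For $f\circ g$ the computation is symmetric: starting from $g\big(u\oz(v\oy w)\big)=(u\ozmy v)\oy w=\mathrm{Res}_{\,x}\,\delta(z-y,x)\,\big((u\ox v)\oy w\big)$, linearity of $f$ together with Proposition \ref{3.10} gives $f\big((u\ozmy v)\oy w\big)=\mathrm{Res}_{\,x}\,\delta(z-y,x)\,\big(u\oxy(v\oy w)\big)$, i.e. the substitution $x\mapsto z-y$ in $u\oxy(v\oy w)=\sum_m P_m(y)(x+y)^{-m-1}$ (where $u\oz(v\oy w)=\sum_m P_m(y)z^{-m-1}$); since $\big((z-y)+y\big)^{-m-1}=z^{-m-1}$, this collapses to $\sum_m P_m(y)z^{-m-1}=u\oz(v\oy w)$. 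Hence $f\circ g=\mathrm{id}$, and $f$, $g$ are the asserted inverse $V$-isomorphisms.

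I expect the only real work to be the bookkeeping of the formal completions: one must invoke each of (\ref{cond1}), (\ref{cond2}), (\ref{cond 2}), (\ref{cond 1}) precisely where it is needed, so that each of $u\oxy(v\oy w)$, $(u\ozmy v)\oy w$, $(u\ox v)\oy w$ and $u\ox(v\oy w)$ lies in the completion claimed, so that the $\mathrm{Res}$-$\delta$ rewritings and their interchange with the linear maps $f$, $g$ are legitimate, and so that the substitution identities $\big((x+y)-y\big)^m=x^m$ and $\big((z-y)+y\big)^m=z^m$ apply termwise. The uniformity of the bounds on the number of negative powers (in $u$, respectively in $w$) is exactly what makes these rewritings valid uniformly in the variable being eliminated. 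Once this is in place the argument is purely formal, along the lines already sketched after the statement of the theorem.
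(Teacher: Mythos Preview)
Your proposal is correct and follows exactly the approach the paper intends: the paper's proof of this theorem is literally the one line ``By Proposition~\ref{3.10} and Proposition~\ref{3.14}, we have'', leaving entirely implicit the verification that $f$ and $g$ are mutual inverses, which is precisely what you supply. Your use of the uniqueness clauses in Propositions~\ref{3.1} and~\ref{3.2} to reduce to checking on generators, together with the $\mathrm{Res}$--$\delta$ substitutions and the identity $((x+y)-y)^m=x^m$, is the standard and correct way to carry this out; your care in tracking which completion each series lives in (invoking (\ref{cond1}), (\ref{cond2}), (\ref{cond 1}), (\ref{cond 2}) at the right moments) is exactly the bookkeeping the paper alludes to in its sketch preceding the theorem but never writes down.
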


\vskip .3cm

\section{Exactness of functors}\lbb{ult}

\

We will introduce the notions of vertex projective, vertex
injective and vertex flat $V$-$\,$modules. Roughly speaking,
homological algebra is concerned with the question of how much
modules differ from being projective, injective and flat.

We do not try to develop homological algebra theory for vertex
algebras in this work. This is just an introduction or motivation
to the reader to try to develop it from the point of view of a
vertex algebra as a generalization of an associative commutative
algebra with unit.

Now, we examine the
behavior of $Vhom$ with respect to exact sequences.

\begin{theorem} \label{exa}
{\rm (Left exactness)} (a) If a sequence of $V$-$\,$modules
$$\begin{CD}
0@>>> N\s ' @>\varphi >> N @> \psi >> N\s ''
\end{CD}$$

\vskip .3cm

\noi is exact, then for any $V$-$\,$module $M$ we have that
$$\begin{CD}
0@>>> Vhom(M,N\s ') @>\varphi_* >> Vhom(M,N) @> \psi_* >>
Vhom(M,N\s '')
\end{CD}$$

\vskip .3cm

\noi is an exact sequence of $V$-$\,$modules.

\vskip .3cm

\noi (b) If a sequence of $V$-$\,$modules
$$\begin{CD}
 M\s ' @>\varphi >> M @> \psi >> M\s '' @>>> 0
\end{CD}$$

\vskip .3cm

\noi is exact, then for any $V$-$\,$module $N$ we have that
$$\begin{CD}
0@>>> Vhom(M\s '',N) @>\psi^* >> Vhom(M,N) @> \varphi^* >>
Vhom(M\s ',N)
\end{CD}$$

\vskip .3cm

\noi is an exact sequence of $V$-$\,$modules.
\end{theorem}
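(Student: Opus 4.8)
The plan is to imitate the proof of left exactness of $\operatorname{Hom}$ over a commutative ring, the extra content being to check that the maps we produce are honest (right) vertex homomorphisms. By Proposition~\ref{54} the induced maps $\varphi_*,\psi_*$ (in part (a)) and $\psi^*,\varphi^*$ (in part (b)) are already $V$-$\,$homomorphisms, so it suffices to prove exactness as sequences of $\kk$-vector spaces. Throughout I would use that a $V$-$\,$homomorphism extends canonically to a map of formal series which still commutes with $d$, with $\ddz$, and with every operator $a\z$; hence identities like $\varphi(a\z m)=a\z\varphi(m)$ and $\varphi(dm)=d\varphi(m)$ hold coefficient-wise on series.

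For part~(a): $\varphi_*$ is injective because $\varphi_*(f)=0$ forces $\varphi(f_z(u))=0$, hence $f_z(u)\in\ker\varphi=0$ for all $u$. The inclusion $\operatorname{im}\varphi_*\subseteq\ker\psi_*$ is $\psi_*\varphi_*=(\psi\varphi)_*=0$. For $\ker\psi_*\subseteq\operatorname{im}\varphi_*$, take $g\in Vhom(M,N)$ with $\psi_*(g)=0$; then $g_z(u)\in\ker\psi=\operatorname{im}\varphi$ for every $u$, so by injectivity of $\varphi$ there is a unique $f_z(u)\in N\s '$ with $\varphi(f_z(u))=g_z(u)$, and $u\mapsto f_z(u)$ is linear. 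I would then verify $f\in Vhom(M,N\s ')$: first, $f_z(u)\in N\s '((z))$, because $\varphi$ is injective coefficient-by-coefficient and the high coefficients of $g_z(u)$ vanish; next, property~(a) of Definition~\ref{Vhom} follows by applying $\varphi$ to $f_z(du)$ and to $\big(d-\ddz\big)f_z(u)$ and cancelling the injective $\varphi$; and property~(b), with the \emph{same} integer $k$ that works for $a$ and $g$, follows by applying $\varphi$ to $(z-x)^k\, a\z(f_x(u))$ and to $(z-x)^k\, f_x(a\z u)$ and cancelling. Since $\varphi_*(f)=g$ by construction, this gives exactness at $Vhom(M,N)$.

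For part~(b): $\psi^*$ is injective since $\psi$ is surjective, so $\psi^*(h)=0$ forces $h_z$ to vanish on all of $M\s ''$. Again $\operatorname{im}\psi^*\subseteq\ker\varphi^*$ is $\varphi^*\psi^*=(\psi\varphi)^*=0$. Conversely, if $g\in Vhom(M,N)$ satisfies $\varphi^*(g)=0$, then $g_z$ vanishes on $\operatorname{im}\varphi=\ker\psi$, so it factors through $\psi$: since $\psi$ is surjective I may define $h_z\colon M\s ''\to N((z))$ by $h_z(\psi(u))=g_z(u)$, which is well defined because $\psi(u_1)=\psi(u_2)$ implies $u_1-u_2\in\ker\psi$. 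Then $h_z(m\s '')=g_z(u)\in N((z))$ automatically, the identity $d\,\psi(u)=\psi(du)$ gives property~(a), and $a\z\psi(u)=\psi(a\z u)$ gives property~(b) with the integer $k$ attached to $a$ in the definition of $Vhom(M,N)$ — which is legitimate precisely because that $k$ does not depend on the argument $u$. Since $\psi^*(h)=g$, this gives exactness at $Vhom(M,N)$.

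The transfer of identities across the injective $\varphi$ (in (a)) or the surjective $\psi$ (in (b)) is routine, exactly as in the commutative-ring case; the two points I expect to be the actual obstacle, and would spell out carefully, are: (i) in part~(a), that the lifted $f_z(u)$ lands in $N\s '((z))$ rather than merely in $N\s '[[z,z^{-1}]]$, which relies on $\varphi$ being injective on each coefficient; and (ii) in both parts, that the locality integer $k$ for the lifted (resp. descended) vertex homomorphism can be taken to be the one already available for $g$, which works exactly because in Definition~\ref{Vhom}(b) the integer $k$ depends only on $a\in V$ and not on the argument.
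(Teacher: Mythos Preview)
Your proposal is correct and follows essentially the same approach as the paper's proof: injectivity, the easy inclusion via $(\psi\varphi)_*=0$ (resp.\ $(\psi\varphi)^*=0$), and then lifting along the injective $\varphi$ (resp.\ descending along the surjective $\psi$) exactly as in the commutative-ring case. In fact you spell out more than the paper does---the paper simply asserts that the lifted/descended map lies in $Vhom$, whereas you explicitly track the Laurent-series condition and the uniformity of the locality exponent $k$, which are indeed the only points beyond the classical argument.
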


\vskip .3cm

\begin{proof} (a)
If $\ \ 0\xrightarrow{ \ \ \ \ } N\s '\xrightarrow{ \ \ \varphi \
\ } N
  \xrightarrow{ \ \ \psi \ \ } N\s ''$ is exact, then:

\vskip .2cm

(i) Ker $\varphi_*=0$: if $f\in$ Ker $\varphi_*$ with $f:M\to N\s
'((z))$, then $\varphi_*(f)=0$, that is $\varphi(f_z(u))=0$ for
all $u\in M$. Since $\varphi$ is injective, we have $f=0$.
Therefore Ker $\varphi_*=0$.

\vskip .2cm

(ii) Im $\varphi_*\subseteq$ Ker $\psi_*$: Since Im $\varphi =$ Ker
$\psi$ by exactness, we have $\psi\varphi=0$ and hence
$\psi_*\varphi_*=( \psi \varphi)_*=0$. Therefore Im
$\varphi_*\subseteq$ Ker $\psi_*$.

\vskip .2cm

(iii) Ker $\psi_*\subseteq$ Im $\varphi_*$: if $g\in $ Ker
$\psi_*$, then $\psi g=0$ and Res$\s_z \, z^m g_z(u)\in $ Ker
$\psi=$ Im $\varphi$ for all $u\in M$, $m\in \ZZ$. Since $\varphi$
is a monomorphism, $\varphi:N\s '\to $ Im $\varphi$ is an
isomorphism. Define $h$ by $h=\varphi^{-1} g$, where
$\varphi^{-1}$ is defined on Im $\varphi$, then $h\in Vhom(M,N\s
')$ and $g=\varphi h=\varphi_*(h)$. Therefore Ker
$\psi_*\subseteq$ Im $\varphi_*$.

\vskip .4cm

\noi (b) If $ M\s '\xrightarrow{ \ \ \varphi \ \ } M
  \xrightarrow{ \ \ \psi \ \ } M\s '' \xrightarrow{ \ \ \ \ }0$ is exact, then:

\vskip .2cm

(i) Ker $\psi^*=0$: if $f\in $ Ker $\psi^*$ with $f\in
Vhom(M\s'',N)$, then $\psi^*(f)=0$, that is $f_z(\psi(u))=0$ for
all $u\in M$. Thus, $f_z(u'')=0$ for all $u''\in $ Im $\psi$.
Since $\psi$ is surjective, we have $f=0$. Therefore Ker
$\psi^*=0$.

\vskip .2cm

(ii) Im $\psi^*\subseteq$ Ker $\varphi^*$: if $g\in Vhom(M\,
'',N)$, then $\varphi^*(\psi^*(g))=(\psi\varphi)^*(g)=0$ because
exactness of the original sequence.

\vskip .2cm

(iii) Ker $\varphi^*\subseteq$ Im $\psi^*$: if $g\in$  Ker
$\varphi^*$, then $g:M\to N((z))$ and $g_z(\varphi(u'))=0$ for all
$u'\in M\s '$. If $u''\in M\s ''$, then $u''=\psi(u)$ for some $u
\in M$, because $\psi$ is surjective. Define $f:M\s '' \to N((z))$
by $f_z(u'')=g_z(u)$ if $u''=\psi(u)$. Note that $f$ is well
defined: if $\psi(u)=\psi(\tilde u)$, then $u-\tilde u\in$ Ker
$\psi=$ Im $\varphi$, so that $u-\tilde u=\varphi(u')$ for some
$u'\in M\s '$. Hence, $g_z(u)-g_z(\tilde u)=g_z(u-\tilde
u)=g_z(\varphi(u'))=0$ because $g_z\varphi=0$. The reader may
check that $f$ is a vertex homomorphism. Finally, $\psi^*(f)=f\psi
=g$, because if $u''=\psi(u)$, then $g_z(u)=f_z(u'')=f(\psi(u))$.
Therefore $g\in $ Im $\psi^*$.
\end{proof}

\

We have proved that $Vhom(M,\square)$ and $Vhom(\,\square, N)$ are
left exact functors. It is {\it not} true in general that
$Vhom(M,\square)$ and $Vhom(\,\square, N)$ are (right) exact
functors. Because, if we consider $V$ the vertex algebra given by
an associative commutative algebra with unit and $d=0$, then
everything collapses to the classical case, that is
$Vhom(M,N)=Vhom^r(M,N)=Hom_V(M,N)$ and we know that $Hom_V$ is not
exact in both variables. This is the motivation for the
introduction of the notions of vertex projective and vertex
injective $V$-$\,$modules.

\

\begin{definition}
A $V$-$\,$module $P$ is said to be {\it vertex projective} if for any
surjective $V$-$\,$homo- morphism $\psi:N\to N\s ''$ and any $f\in
Vhom(P,N\s '')$, there exists a lifting $g$, that is, there exists
$g\in Vhom(P,N)$ making the following diagram commute:

\begin{displaymath}
\xymatrix{
& P \ar@{.>}[dl]_{g} \ar[d]^f \\
N((z)) \ar[r]_\psi & N''((z)) \ar[r] & 0}
\end{displaymath}
\end{definition}

\vskip .3cm

\begin{proposition}
A $V$-$\,$module $P$ is vertex projective if and only if
$Vhom(P,\square)$ is an exact functor.
\end{proposition}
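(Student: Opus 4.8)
The plan is to reduce the statement to the fact that $Vhom(P,\square)$ is already a \emph{left} exact functor (\thref{exa}(a)), so that exactness amounts to the preservation of epimorphisms, and then to recognize that this preservation property is a verbatim restatement of the lifting condition defining vertex projectivity.

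First I would recall from \prref{54} that any $V$-homomorphism $\psi\colon N\to N''$ induces a $V$-homomorphism $\psi_*\colon Vhom(P,N)\to Vhom(P,N'')$, given by $[\psi_*(g)]_z(u)=\psi(g_z(u))$, and that $Vhom(P,\square)$ is a covariant functor. Then, for any short exact sequence $0\to N'\xrightarrow{\varphi}N\xrightarrow{\psi}N''\to 0$ of $V$-modules, \thref{exa}(a) gives that
$$
0\longrightarrow Vhom(P,N')\xrightarrow{\ \varphi_*\ }Vhom(P,N)\xrightarrow{\ \psi_*\ }Vhom(P,N'')
$$
is exact. Hence $Vhom(P,\square)$ is an exact functor precisely when, for every such sequence, $\psi_*$ is moreover surjective. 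Since every surjective $V$-homomorphism $\psi\colon N\to N''$ fits into the short exact sequence $0\to\ker\psi\to N\xrightarrow{\psi}N''\to 0$ (the kernel of a $V$-homomorphism being a $V$-submodule), this is the same as asking that $\psi_*$ be surjective for every surjective $V$-homomorphism $\psi$.

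It then remains only to observe that surjectivity of $\psi_*\colon Vhom(P,N)\to Vhom(P,N'')$ says exactly that every $f\in Vhom(P,N'')$ admits $g\in Vhom(P,N)$ with $\psi\circ g=f$, which is precisely the assertion that the diagram in the definition can be completed, i.e. that $P$ is vertex projective. Combining the two directions finishes the proof. I do not expect a genuine obstacle here; the argument is the standard homological one, and the only points deserving a line of justification are that $\ker\psi$ is a $V$-submodule and that $\psi$, extended canonically to a map $N((z))\to N''((z))$, induces exactly the map $\psi_*$ on $Vhom$-spaces.
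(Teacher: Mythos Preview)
Your proposal is correct and follows essentially the same approach as the paper: invoke left exactness from \thref{exa}(a), reduce exactness to surjectivity of $\psi_*$ for surjective $\psi$, and identify this with the lifting property defining vertex projectivity. The paper's proof is terser but logically identical, simply noting that vertex projectivity makes $\psi_*$ surjective and that the converse follows by reversing the argument.
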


\begin{proof}
If $P$ is vertex projective and $\psi:N\to N''$ is a surjective
$V$-$\,$homomorphism, then given $f\in Vhom(P,N'')$ there exists a
lifting $g:P\to N((z))$ with $f=\psi g=\psi_*(g)\in \,$Im
$\psi_*$, and so $\psi_*$ is surjective. Hence, by Theorem
\ref{exa} we have that $Vhom(P,\square )$ is an exact functor. The
converse follows by reversing the previous argument.
\end{proof}

\vskip .3cm

\begin{definition}
A $V$-$\,$module $E$ is  {\it vertex injective} if for any injective
$V$-$\,$homomorphism $\varphi:M\s'\to M$ and any $f\in Vhom(M\s ',E)$,
there exists $g\in Vhom(M,E)$ making the following diagram
commute:

\begin{displaymath}
\xymatrix{
0   \ar[r]
&M' \ar[d]_f \ar[r]^\varphi
&M \ar@{.>}[ld]^g
\\
&E((z))}
\end{displaymath}

\end{definition}

In words, every vertex homomorphism from a submodule into $E$ can
always be extended to a vertex homomorphism from the big module
into $E$.

\begin{proposition}
A $V$-$\,$module $E$ is vertex injective if and only if $Vhom(\square,
E)$ is an exact functor.
\end{proposition}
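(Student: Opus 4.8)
The plan is to mirror the proof of the preceding proposition (the vertex projective case). The point is that \thref{exa}(b) already establishes that $Vhom(\square,E)$ is a contravariant \emph{left} exact functor, sending $0\to M\s'\xrightarrow{\varphi}M\xrightarrow{\psi}M\s''\to 0$ to the exact sequence $0\to Vhom(M\s'',E)\xrightarrow{\psi^*}Vhom(M,E)\xrightarrow{\varphi^*}Vhom(M\s',E)$; so the proposition reduces to the elementary equivalence "$E$ is vertex injective $\iff$ $\varphi^*$ is surjective for every monomorphism $\varphi$'', which is exactly what is needed for $Vhom(\square,E)$ to be (right) exact as well.

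For the forward implication I would assume $E$ is vertex injective and let $\varphi:M\s'\to M$ be an injective $V$-homomorphism. Given any $f\in Vhom(M\s',E)$, vertex injectivity of $E$ applied to the diagram with $\varphi$ on top and $f$ on the side yields $g\in Vhom(M,E)$ with $g\varphi=f$, that is $\varphi^*(g)=f$; hence $\varphi^*$ is surjective. Combining this with \thref{exa}(b), for any short exact sequence $0\to M\s'\xrightarrow{\varphi}M\xrightarrow{\psi}M\s''\to 0$ the sequence $0\to Vhom(M\s'',E)\xrightarrow{\psi^*}Vhom(M,E)\xrightarrow{\varphi^*}Vhom(M\s',E)\to 0$ is exact, so $Vhom(\square,E)$ is an exact functor.

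For the converse I would assume $Vhom(\square,E)$ is exact, take an injective $V$-homomorphism $\varphi:M\s'\to M$ and an arbitrary $f\in Vhom(M\s',E)$. Since $\varphi$ is a $V$-homomorphism, $\varphi(M\s')$ is a $V$-submodule of $M$, so the quotient $M\s''=M/\varphi(M\s')$ is again a $V$-module and the canonical projection $\psi:M\to M\s''$ is a $V$-homomorphism, giving a short exact sequence $0\to M\s'\xrightarrow{\varphi}M\xrightarrow{\psi}M\s''\to 0$. Applying the exact functor $Vhom(\square,E)$ and using \thref{exa}(b), the map $\varphi^*:Vhom(M,E)\to Vhom(M\s',E)$ is surjective, so there is $g\in Vhom(M,E)$ with $\varphi^*(g)=f$, i.e. $g\varphi=f$; this $g$ is precisely the extension required in the definition of vertex injective. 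Hence $E$ is vertex injective.

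The only auxiliary point requiring verification is that the cokernel $M/\varphi(M\s')$ carries a natural $V$-module structure for which $\psi$ is a $V$-homomorphism; this is routine, exactly as over an associative commutative algebra with unit (quotient of the $d$-action and of each $z$-product), and is not a genuine obstacle. I do not expect any hard step here: once \thref{exa}(b) is in hand, the whole argument is a formal manipulation of lifting properties, and the main care is simply in matching the definition of $Vhom$ (maps into $E((z))$) with the composite $g\varphi$.
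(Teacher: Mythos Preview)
Your proposal is correct and follows essentially the same approach as the paper: both reduce the question, via \thref{exa}(b), to the equivalence between vertex injectivity of $E$ and surjectivity of $\varphi^*$ for every monomorphism $\varphi$. The paper's proof is terser---it simply observes that this surjectivity \emph{is} the definition---while you spell out the converse by embedding $\varphi$ into a short exact sequence via the cokernel; this extra care is harmless and arguably clearer.
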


\begin{proof}
By Theorem \ref{exa}, $Vhom(\square , E)$ is a left exact
contravariant functor, therefore the thrust of the proposition is
that the induced map $\varphi^*$ is surjective whenever $\varphi$
is injective, and this is exactly the definition of vertex
injective module.
\end{proof}

\

Now, we examine the behavior of tensor products with respect to
exact sequences:

\begin{theorem}
{\rm (Right exactness)} If $ N\s '\xrightarrow{ \ \ f \ \ } N
  \xrightarrow{ \ \ g \ \ } N\s '' \xrightarrow{ \ \ \ \ }0$ is an
  exact sequence of $V$-$\,$modules and $M$ is a $V$-$\,$module, then
$$
M\oV N\s '\xrightarrow{ \ 1_M\otimes f  \ } M\oV N
  \xrightarrow{ \ 1_M\otimes g  \ } M\oV N\s '' \xrightarrow{ \ \ \ \ }0
$$
is an  exact sequence of $V$-$\,$modules.
\end{theorem}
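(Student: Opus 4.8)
The plan is to mirror the classical proof that $-\otimes-$ is right exact over a commutative ring, the crucial input being the adjoint isomorphism $Hom_V(M\oV N',W)\simeq Hom_V(N',Vhom^r(M,W))$ of Corollary~\ref{TTTTT} (natural in $N'$ and $W$ as in Theorem~\ref{TT22}), together with the fact that the coefficients of the elements $u\oz v$ span $M\oV N$. First I would dispose of the elementary parts: by Corollary~\ref{coco}, $(1_M\otimes g)\circ(1_M\otimes f)=1_M\otimes(gf)=1_M\otimes 0=0$, so $K:=\mathrm{Im}(1_M\otimes f)$, being the image of a $V$-homomorphism, is a $V$-submodule of $M\oV N$ contained in $\mathrm{Ker}(1_M\otimes g)$. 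Let $\pi:M\oV N\to Q:=(M\oV N)/K$ be the quotient $V$-homomorphism; then $1_M\otimes g$ factors through $\pi$ as $1_M\otimes g=\bar g\circ\pi$ for a unique $V$-homomorphism $\bar g:Q\to M\oV N''$. It now suffices to prove that $\bar g$ is a $V$-isomorphism: since $\pi$ is onto, so is $1_M\otimes g=\bar g\circ\pi$ (exactness at $M\oV N''$), and $\mathrm{Ker}(1_M\otimes g)=\mathrm{Ker}(\bar g\circ\pi)=\mathrm{Ker}(\pi)=K=\mathrm{Im}(1_M\otimes f)$ (exactness at $M\oV N$).

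To build an inverse of $\bar g$ I would invoke the adjunction twice. The $V$-homomorphism $\pi$ corresponds under the isomorphism $\tau'$ of Theorem~\ref{TT22} to a $V$-homomorphism $\theta:N\to Vhom^r(M,Q)$ with $[\theta(v)]_z(u)=\pi(u\oz v)$. For $v'\in N'$ one computes $[\theta(f(v'))]_z(u)=\pi(u\oz f(v'))=\pi\big((1_M\otimes f)(u\oz v')\big)=0$, since every coefficient of $(1_M\otimes f)(u\oz v')$ lies in $K$; hence $\theta\circ f=0$. As $g$ is surjective with kernel $\mathrm{Im}(f)$ and $\theta$ vanishes on $\mathrm{Im}(f)$, the map $\theta$ descends to a unique $V$-homomorphism $\bar\theta:N''\to Vhom^r(M,Q)$ with $\bar\theta\circ g=\theta$. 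Applying $(\tau')^{-1}$ to $\bar\theta$ gives a $V$-homomorphism $\sigma:M\oV N''\to Q$ with $\sigma(u\oz v'')=[\bar\theta(v'')]_z(u)$; in particular $\sigma(u\oz g(v))=[\theta(v)]_z(u)=\pi(u\oz v)$.

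It remains to check that $\sigma$ and $\bar g$ are mutually inverse, which I would do on spanning sets. On a generator, $\bar g(\pi(u\oz v))=(1_M\otimes g)(u\oz v)=u\oz g(v)$ and then $\sigma(u\oz g(v))=\pi(u\oz v)$; since the coefficients of the $\pi(u\oz v)$ span $Q$ and — as $g$ is onto — the coefficients of the $u\oz g(v)$ span $M\oV N''$, this yields $\sigma\circ\bar g=\mathrm{id}_Q$ and $\bar g\circ\sigma=\mathrm{id}_{M\oV N''}$, so $\bar g$ is a $V$-isomorphism and we are done. The one point deserving care — and the place a reader should check details — is that the auxiliary constructions live in the category of $V$-modules: that images and quotients of $V$-modules are again $V$-modules and that the first isomorphism theorem holds there, and that $\pi$ extended coefficientwise (hence the bilinear assignment $(u,v)\mapsto\pi(u\oz v)$) is compatible with the $V$-action, with $d$ and with $\frac{d}{dz}$, so that the adjoint correspondence of Theorem~\ref{TT22} may be applied; all of this is routine, the genuinely substantial ingredient being the adjunction itself.
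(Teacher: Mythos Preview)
Your proof is correct and follows the same strategy as the paper's: show $(1_M\otimes g)\circ(1_M\otimes f)=0$, factor $1_M\otimes g$ through $Q=(M\oV N)/\mathrm{Im}(1_M\otimes f)$ as $\bar g$, and construct an inverse to $\bar g$. The only difference is in how the inverse is produced. The paper builds it directly: it writes down $\tilde h_z:M\times N''\to Q((z))$ by $\tilde h_z(u,v'')=\pi(u\oz v)$ for any $v$ with $g(v)=v''$, checks by hand that this is independent of the lift $v$ (which is precisely your computation $\theta\circ f=0$ unwound), asserts that $\tilde h_z$ is a vertex bilinear map, and invokes the universal property of $M\oV N''$. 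You instead route the same construction through the adjunction of Theorem~\ref{TT22}: convert $\pi$ into $\theta\in Hom_V(N,Vhom^r(M,Q))$, factor $\theta$ through $N''$, and transport back via $(\tau')^{-1}$. Your packaging gets the vertex bilinearity of the resulting map for free (it is absorbed into the isomorphism $\tau'$), at the cost of invoking the adjoint isomorphism; the paper's route is more self-contained but leaves the verification that $\tilde h_z$ is $V$-bilinear to the reader. The two arguments are equivalent repackagings of the same idea.
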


\begin{proof}
There are three things to check.

\vskip .3cm

\noi (i) $1_M\otimes g$ is surjective: given $v''\in N\s ''$ and
$u\in M$, there exists $v\in N$ such that $g(v)=v''$, and hence
\begin{equation*}
u\oz v''=u\oz g(v)=(1_M\otimes g)(u\oz v).
\end{equation*}
Since elements of type $u\oz v''$ (in fact, the coefficients in
$z$) generate $M\oV N\s ''$, we conclude that $1_M\otimes g$ is
surjective.

\vskip .3cm

\noi (ii) Im $(1_M\otimes f)\subseteq $ Ker $(1_M\otimes g)$: it
suffices to prove that the composite is 0, but using Corollary
\ref{coco} we have
\begin{equation*}
(1_M\otimes f)(1_M\otimes g)=1_M\otimes fg =1_M\otimes 0=0.
\end{equation*}

\vskip .3cm

\noi (iii) Ker $(1_M\otimes g)\subseteq$ Im $(1_M\otimes f)$: let
$E=$Im $(1_M\otimes f)$. By part (ii), $E\subseteq $ Ker
$(1_M\otimes g)$, and so $1_M\otimes g$ induces a map $\hat g
:(M\oV N)/E\to M\oV N\s ''$ with $\hat g (u\oz v+E)=u\oz g(v)$,
where $u\in M$ and $v\in N$. Now, if $\pi : M\oV N\to (M\oV N)/E$
is the canonical epimorphism, then $\hat g \pi =1_M\otimes g$. We
shall define a map $h:M\oV N\s '' \to (M\oV N)/E$ such that $h\,
\hat g =id$. This obviously will imply that $\hat g$ is injective,
and hence
\begin{equation*}
\hbox{Ker }(1_M\otimes g)=\hbox{Ker }(\hat g \pi)=\hbox{Ker }
(\pi)=\hbox{Im } (1_M\otimes f)
\end{equation*}
and we are done. First, we show that the map
\begin{equation*}
\tilde h_z:M\times N\s '' \longrightarrow \big[(M\oV N)/E\big ]\,
((z))
\end{equation*}
given by $\tilde h_z(u,v'')=\pi(u\oz v)$, where $g(v)=v''$, is
well defined, i.e. is independent of the choice of $v$. Note that
there is at least on such $v$ since $g$ is surjective. If
$g(v_1)=g(v_2)=v''$, then $g(v_1-v_2)=0$, and by hypothesis,
$v_1-v_2=f(v')$ for some $v'\in N\s '$. Then each coefficient in
$z$ of
\begin{equation*}
u\oz v_1-u\oz v_2=u\oz (v_1-v_2)=u\oz f(v')
\end{equation*}
is in Im $(1_M\otimes f)=E$, and this proves that our map is well
defined. It is obviously a vertex bilinear map, and by the
universal property, there is a unique $V$-$\,$homomorphism
\begin{equation*}
h:M\oV N\s '' \longrightarrow (M\oV N)/E
\end{equation*}
such that $h(u\oz v'')=\pi (u\oz v)$, where $g(v)=v''$. It is
clear that
\begin{equation*}
h(\hat g(u\oz v +E))=h(u\oz g(v))=u\oz v +E,
\end{equation*}
hence $h\hat g$ is the identity map. So, we conclude that $\hat g$
is injective, as was to be shown.
\end{proof}

\

The next type of module arises from tensor products in the same
way that vertex projective and vertex injective modules arise from
$Vhom$. If $V$ is a vertex algebra given by an associative
commutative algebra with unit, the tensor product coincides with
the usual tensor product of modules and we know that the
associated functor is not left exact.

\begin{definition}
A $V$-$\,$module $M$ is {\it vertex flat} if $M\oV \square$ is an
exact functor.
\end{definition}

Because the functors $M\oV \square$ are right exact, we see that a
$V$-$\,$module $M$ is vertex flat if and only if, whenever $i\in
Hom_V(N\s ',N)$ is an injection, then $1_M\otimes i:M\oV N\s ' \to
M\oV N$ is also an injection.

\

\

\subsection*{Acknowledgements}
The  author was supported  by a grant by Conicet, Consejo Nacional
de Investigaciones Cient\'ificas y T\'ecnicas (Argentina). The
author would like to thank  C. Boyallian, M. V. Postiguillo and C.
Bortni for their constant help, care and support to finish this work.
The author would like to thank F. Orosz for her help, patience,
kindliness and constant support throughout the first part of
this work.

\bibliographystyle{amsalpha}


\end{document}